\numberwithin{equation}{section}
\theoremstyle{plain}
\newtheorem{theorem}{Theorem}[section]
\newtheorem{lemma}[theorem]{Lemma}
\newtheorem{proposition}[theorem]{Proposition}
\newtheorem{corollary}[theorem]{Corollary}
\theoremstyle{remark}
\newtheorem{remark}[theorem]{{Remark}}
\newtheorem{definition}[theorem]{Definition}
\newcommand{\supp}{\operatorname{supp}}
\newcommand{\diam}{\operatorname{diam}}
\newcommand{\textd}{{\rm d}\mkern0.5mu}
\newcommand{\texte}{{\rm  e}\mkern0.7mu}
\newcommand{\Var}{\text{\rm Var}}
\newcommand{\LL}{\mathcal L}
\newcommand{\MM}{\mathcal M}
\newcommand{\NN}{\mathcal N}
\newcommand{\RR}{\mathcal R}
\newcommand{\cR}{\RR}
\newcommand{\cL}{\LL}
\newcommand{\E}{\mathbb E}
\newcommand{\bbE}{\E}
\newcommand{\N}{\mathbb N}
\newcommand{\BbbP}{\mathbb P}
\newcommand{\bbP}{\BbbP}
\newcommand{\Q}{\mathbb Q}
\newcommand{\R}{\mathbb R}
\newcommand{\bbR}{\R}
\newcommand{\Z}{\mathbb Z}
\newcommand{\bbZ}{\Z}
\newcommand{\twoeqref}[2]{(\ref{#1}--\ref{#2})}
\newcommand{\cc}{{{\rm c}}}
\newcommand{\fraka}{\mathfrak a}
\newcommand{\wh}{\widehat}
\newcommand{\wt}{\widetilde}
\newcommand{\ol}{\overline}
\newcommand{\laweq}{\,\overset{\text{\rm law}}=\,}
\newcommand{\leb}{{\rm Leb}}
\newcommand{\Cov}{\text{\rm Cov}}
\newcommand{\Lawarrow}{{\,\overset{\text{\rm law}}\longrightarrow\,}}
\newcommand{\rmd}{\textd}
\newcommand{\rme}{\texte}
\newcommand{\fU}{\mathfrak D}
\newcommand{\abs}[1]{|#1|}
\newcommand{\la}{\langle}
\newcommand{\ra}{\rangle}
\newcommand{\ep}{\epsilon}
\newcommand{\Ind}{\text{\sf 1}}
\newcommand{\Leb}{\leb}
\newcommand{\bbN}{\N}
\newcommand{\bbQ}{\Q}
\def\myffrac#1#2 in #3{\raise 2.6pt\hbox{$#3 #1$}\mkern-1.5mu\raise 0.8pt\hbox{$#3/$}\mkern-1.1mu\lower 1.5pt\hbox{$#3 #2$}}
\newcommand{\ffrac}[2]{\mathchoice%
	{\myffrac{#1}{#2} in \scriptstyle}
	{\myffrac{#1}{#2} in \scriptstyle}
	{\myffrac{#1}{#2} in \scriptscriptstyle}
	{\myffrac{#1}{#2} in \scriptscriptstyle}
}
\newcommand\independent{\protect\mathpalette{\protect\independenT}{\perp}}
\def\independenT#1#2{\mathrel{\rlap{$#1#2$}\mkern3mu{#1#2}}}
\begin{document}

\begin{frontmatter}

\title{
	Near-maxima of the two-dimensional\\Discrete Gaussian Free Field}
\runtitle{Near-maxima of  two-dimensional  DGFF}

\begin{aug}
\author[A]{\inits{MB}\fnms{Marek}~\snm{Biskup}\ead[label=e1]{biskup@math.ucla.edu}},
\author[B,C]{\inits{SG}\fnms{Stephan}~\snm{Gufler}\ead[label=e2]{gufler@math.uni-frankfurt.de}}
\and
\author[B]{\inits{OL}\fnms{Oren}~\snm{Louidor}\ead[label=e3]{oren.louidor@gmail.com}}
\address[A]{Department of Mathematics, UCLA, Los Angeles, California, USA\printead[presep={,\ }]{e1}}

\address[B]{Department of Data and Decision Sciences, Technion, Haifa, Israel\printead[presep={,\ }]{e3}}

\address[C]{Institute of Mathematics, Goethe University, Frankfurt am Main, Germany\printead[presep={,\ }]{e2}}
\end{aug}

\begin{abstract}
We consider the Discrete Gaussian Free Field (DGFF) in  domains~$D_N\subseteq\mathbb Z^2$ arising, via scaling by~$N$, from nice domains~$D\subseteq\mathbb R^2$. We study the statistics of the  values  order $\sqrt{\log N}$ below the absolute maximum. 
Encoded as a point process on~$D\times\mathbb R$,  the scaled  spatial distribution of these  near-extremal  level sets in $D_N$ and the field values (in units of~$\sqrt{\log N}$  below  the absolute maximum) tends, as~$N\to\infty$, in law 
to the product of the critical Liouville Quantum Gravity (cLQG)~$Z^D$ and the Rayleigh law. The convergence holds jointly with the extremal process, for which~$Z^D$ enters as the intensity measure of the limiting Poisson point process, and that of the DGFF itself; the~cLQG defined by the limit field then coincides with~$Z^D$. While the limit near-extremal process is measurable with respect to the limit continuum GFF, the limit extremal process is not.
Our results explain why the various ways to ``norm'' the lattice cLQG measure lead to the same limit object, modulo overall normalization.
\end{abstract}


\begin{keyword}[class=MSC]
\kwd{60G70}
\kwd{60G60}
\kwd{60G15}
\kwd{60G57}
\end{keyword}

\begin{keyword}
\kwd{Gaussian Free Field}
\kwd{log correlated fields}
\kwd{Liouville quantum gravity}
\kwd{extreme value theory}
\end{keyword}

\end{frontmatter}


\section{Introduction}
\noindent
Recent years have witnessed considerable progress in the understanding of extremal behavior of logarithmically correlated spatial random processes. A canonical example among these is the two-dimensional Discrete Gaussian Free Field (DGFF). In its $\Z^2$-based version, this is a 
centered Gaussian process $\{h^V_x\colon x\in V\}$ indexed by vertices in a (non-empty) set $V\subsetneq\Z^2$ with covariances
\begin{equation}
	\label{E:1.1}
	\Cov(h^V_x,h^V_y)=G^V(x,y),\quad x,y\in V,
\end{equation}
where $G^V$ is the Green function of the simple random walk. 
In  the normalization that we use,  $G^V(x,y)$ is the expected number of visits to~$y$ of the walk started at~$x$  and killed upon its first exit from~$V$.

We will focus on the asymptotic properties of $h^{D_N}$ in a sequence of lattice domains  $\{D_N\}_{N\ge1}$ arising, more or less, through scaling by~$N$, from a nice continuum domain $D\subseteq\R^2$.
 Here the  existence of 
 a limit  law  for  $\max_{x\in D_N}h^{D_N}_x-m_N$,  with the centering sequence  given by
\begin{equation}
	m_N:=2\sqrt g\log N-\frac34\sqrt g\log\log(N\vee\texte)
\end{equation}
for a suitable constant~$g$ (see \eqref{E:2.3a}), was settled in Bramson and Zeitouni~\cite{BZ} and Bramson, Ding and Zei\-touni~\cite{BDingZ}. 
In~\cite{BL1,BL2,BL3}, two of the present authors  strengthened  this to a full description of the \emph{extremal} process. This is expressed via the point process convergence
\begin{equation}
	\label{E:1.3}
	\sum_{x\in D_N}\delta_{x/N}\otimes\delta_{h_x^{D_N}-m_N}\,\,\,\underset{N\to\infty}\Lawarrow\,\,\,\sum_{i\ge1}\sum_{z\in\Z^2}\delta_{x_i}\otimes\delta_{h_i-\phi_z^{(i)}},
\end{equation}
where the process on the  right $\{(x_i,h_i,\phi^{(i)})\colon i\ge1\}$ enumerates the sample ``points'' of the Poisson point process
\begin{equation}
	\label{E:1.3a}
	\text{PPP}\Bigl(\bar c\,Z^D(\textd x)\otimes\texte^{-\alpha h}\textd h\otimes\nu(\textd\phi)\Bigr)
\end{equation}
on $D\times\R\times[0,\infty)^{\Z^2}$  and  the various objects  in \eqref{E:1.3a} are  as follows: $\bar c\in(0,\infty)$ is a constant,~$Z^D$ is a (non-degenerate) \emph{random} a.s.-finite Borel measure on~$D$ normalized so that
\begin{equation}
	\label{E:1.4}
	\lim_{s\downarrow0}\frac{E\bigl(Z^D(A)\,\texte^{- sZ^D(D)}\bigr)}{\log(1/s)} = \int_A r^D(x)^2\textd x,
\end{equation}
where~$r^D(x)$ is, for~$D$ simply connected, the conformal radius of~$D$ from~$x$ (see \eqref{E:3.3}), $\alpha:=2/\sqrt g$ and~$\nu$ is a (deterministic) probability measure on~$[0,\infty)^{\Z^2}$.

The random  measure~$Z^D$ encodes the macroscopic correlations of the DGFF that ``survive'' the scaling limit. As was shown in~\cite[Theorem~2.9]{BL2}, $Z^D$ can be independently characterized as a version of the \emph{critical} Liouville Quantum Gravity (cLQG).
The LQG is a one-parameter family of random measures introduced by Duplantier and Sheffield~\cite{DS} (although the concept goes back to Kahane's~\cite{Kahane} theory of multiplicative chaos) given formally as $\Ind_D(x)\texte^{\beta h(x)}\textd x$ for~$h$ denoting the continuum Gaussian Free Field (CGFF) on~$D$. However, since the CGFF exists only in the sense of distributions, work is needed to give this expression a rigorous meaning. This is particularly subtle in the critical case $\beta=\alpha$, which is the one relevant for the present paper.

 A natural way to construct the critical LQG is by regularization. This is the approach of Duplantier, Rhodes, Sheffield and Vargas~\cite{DRSV1,DRSV2}. (Another approach via ``mating of trees'' has recently been developed  in Duplantier, Sheffield and Miller~\cite{DSM21} and Aru, Holden, Powell and Sun~\cite{AHPS}). A number of different regularizations have been considered which then raises the question of uniqueness; i.e., independence of the regularization used.  This was first addressed  in Rhodes and Vargas~\cite{RV-review} and ultimately settled in Junnila and Saksman~\cite{JS} and Powell~\cite{Powell}.

In another paper by two of the present authors~\cite{BL4}, the subcritical ($0<\beta<\alpha$) LQGs have been shown to describe the spatial part of the $N\to\infty$ limit of random measures
\begin{equation}
	\frac1{K_N}\sum_{x\in D_N}\delta_{x/N}\otimes\delta_{h_x^{D_N}-2\sqrt g\lambda\log N},
\end{equation}
where $\lambda:=\beta/\alpha\in(0,1)$ and $K_N:=N^{2(1-\lambda^2)}/\sqrt{\log N}$. This  made it possible  to quantify the  growth-rate  of  the level set $\{x\in D_N\colon h_x^{D_N}\ge 2\sqrt g\lambda\log N\}$, called ``intermediate'' in~\cite{BL4} but also known as the ``DGFF thick points.'' 

The goal of the present paper is to derive similar control for (what we call) the \emph{near-extremal} level sets
\begin{equation}
	\label{e:1.2}
	\bigl\{x\in D_N\colon h^{D_N}_x\ge m_N- r\sqrt{\log N}\bigr\}
\end{equation}
for~$r>0$.
We will again glean this from a corresponding near-extremal point process,
\begin{equation}
	\label{e:1.3}
	\zeta_N^D:=\frac1{\log N}\sum_{x\in D_N}\texte^{\alpha(h^{D_N}_x-m_N)}\delta_{x/N}\otimes\delta_{(m_N-h^{D_N}_x)/{\sqrt{\log N}}}\,.
\end{equation}
It is actually already known that, modulo normalization, the spatial part of this measure, 
\begin{equation}
	\label{e:1.5}
	\frac1{\log N}\sum_{x\in D_N}\texte^{\alpha(h^{D_N}_x-m_N)}\delta_{x/N}
\end{equation}
to be referred to as the \emph{lattice-cLQG measure} below, converges in law to  a multiple of  $Z^D$ as~$N\to\infty$ (Rhodes and Vargas~\cite[Theorem 5.13]{RV-review} and Junnila and Saksman~\cite[Theorem 1.1]{JS}).  The measure \eqref{e:1.3}  extends \eqref{e:1.5} to include  information about  the statistics of the field values in the near-extremal regime.

Our main findings are as follows: First, the bulk of the lattice-cLQG measure is carried by  the near-extremal values; i.e., the  sites  where $m_N-h_x=O(\sqrt{\log N})$ with the scaled difference asymptotically Rayleigh distributed. Second, the convergence of the near-extremal process holds \emph{jointly} with that of the extremal process in \eqref{E:1.3} \textit{and}  the underlying DGFF.
Third, the $Z^D$-measure arising in the limit of the near-extremal process coincides, in a path-wise sense, with the intensity in \eqref{E:1.3a}  \textit{and} with  the cLQG defined by the limiting CGFF.
 Fourth,  this shows that,  while the limit near-extremal process is a measurable function of the CGFF, the extremal process is~not.   Fifth, we  also explain why the various ways to ``norm'' the measure \eqref{e:1.5} lead to the same limit continuum object, up to a deterministic multiplicative~constant.

\section{Main result}
\label{sec2}\noindent
The statement of the main theorem requires additional notation and definitions. These will be consistent with those used in references~\cite{BL1,BL2,BL3} and the review~\cite{B-notes},  where we draw useful facts from on several occasions in this paper.

For~$A\subseteq\Z^2$, let $\partial A$ denote the set of vertices in~$A^\cc$ that have a neighbor in~$A$.  We will use $\lfloor x\rfloor$ to denote the unique $z\in\Z^2$ such that $x- z\in[0,1)^2$. We write~$\textd_\infty$ for the~$\ell^\infty$-distance on~$\R^2$. The standard notation~$\NN(\mu,\sigma^2)$ is used for the law of a normal with mean~$\mu$ and variance~$\sigma^2$. If $\mu$ is a measure and~$f$ is a $\mu$-integrable function, we abbreviate $\langle\mu,f\rangle :=\int f\rmd\mu$. If~$G$ is a function of two variables, we write $G\mu$ for $x\mapsto \int G(x,y)\mu(\textd y)$. Given a topological space~$E$ we use $C_\cc(E)$ to denote the space of real-valued continuous functions on~$E$ with compact support in~$E$.

Let~$\mathfrak D$  denote  the class of bounded open subsets of~$\R^2$ whose boundary consists of a finite number of connected components each of which has a positive Euclidean diameter. We will approximate $D\in\mathfrak D$ by a sequence of  $\{D_N\}_{N\ge1}$ of lattice domains defined by
\begin{equation}
	\label{E:2.1}
	D_N := \bigl\{x \in \Z^2 \colon \textd_\infty(x/N,D^\cc)>\ffrac{1}{N}\bigr\}.
\end{equation}
Much of what is to follow depends on a precise control of the Green function in~$D_N$, or other domains of size of order~$N$.
Two specific constants,
\begin{equation}
	\label{E:2.3a}
	g:=\frac2\pi \quad\text{and}\quad c_0:=\frac{2\gamma+\log 8}\pi
\end{equation}
with~$\gamma$ denoting the Euler constant, appear throughout the derivations in the expansions of the diagonal Green function; see Lemma~\ref{lemma-3.2}.  These arise through an asymptotic formula of the potential kernel. 

 A key point driving several limit arguments is that, for  sites  in~$D_N$ separated by distances of order~$N$, the lattice Green function $G^{D_N}$ is well approximated by its continuum counterpart $\wh G^D\colon D\times D\to\R\cup\{+\infty\}$. One way to view~$\wh G^D$ is as the integral kernel for the inverse of the (scaled negative) Laplacian $-\frac14\Delta$ with Dirichlet boundary conditions on~$\partial D$. This means that $f(y):=\wh G^D(x,y)$ solves the Poisson equation $-\Delta f=4\delta_x$ in~$D$ with zero boundary conditions on~$\partial D$. As a result, we get an explicit representation using the Poisson kernel, cf~\eqref{E:3.6}, which shows that~$\wh G^D$ is continuous except on the diagonal of~$D\times D$ where it diverges logarithmically with~$|x-y|$. 

Let~$\mathcal{M}_\cc(D)$ be the set of finite signed  measures on~$\R^2$ of the form  $\rho = \rho_+ - \rho_-$, where~$\rho_\pm$ are non-negative finite measures with compact support in~$D$ and
\begin{equation}
	\int_{D \times D} \,\bigl|\wh G^D(x,y)\bigr|\, |\rho|(\rmd x) |\rho| (\rmd y)< \infty
\end{equation}
holds for~$|\rho|:=\rho_++\rho_-$. 
Following, e.g., Berestycki~\cite{Berestycki}, we then introduce:

\begin{definition}
\label{def-2.1}
	The Continuum Gaussian Free Field (CGFF) in~$D$ is a family of random variables $\{h^D(\rho)\colon \rho\in \mathcal{M}_\cc(D)\}$ such that $\rho\mapsto h^D(\rho)$ is linear and
	\begin{equation}
		h^D(\rho)=\NN(0,\sigma_\rho^2)\quad\text{\rm with}\quad \sigma_\rho^2:=\int_{D\times D} \wh  G^D(x,y) \rho(\rmd x) \rho(\rmd y) 
	\end{equation}
	for each $\rho\in \mathcal{M}_\cc(D)$.
\end{definition}

\noindent
Regarding~$\rho\mapsto h^D(\rho)$ as a coordinate  map,  we may and will henceforth treat~$h^D$ as an element of~$\R^{\mathcal{M}_\cc(D)}$ endowed with the product topology.

In order to define the critical LQG measure, we follow the approach in Duplantier and Sheffield~\cite{DS} that proceeds by smoothing~$h^D$ via circle averages. We do this mainly to ensure that the resulting critical LQG measure is measurable with respect to $h^D$. For each $r > 0$ and $x \in \bbR^2$,   let $\rho_{x,r}$ be the uniform measure on the Euclidean circle of radius~$r$ centered at~$x$.  Given $t > 0$ we then
set
\begin{equation}\label{e:ht-circ}
	h^D_t(x) := h^D(\rho_{x, \rme^{-t}}) \,.
\end{equation}
By~\cite[Proposition 3.1]{DS}, the family of random variables $\{h_t^D(x)\colon x \in D, t > 0\}$ admits a  version that  is jointly continuous on~$D \times (0,\infty)$. We may therefore use it to define the (random) measure
\begin{equation}
	\label{E:2.7}
	Z_t^D(\textd x):=\Ind_D(x)\alpha\sqrt{t}\,\texte^{\alpha h^D_t(x)-\frac12\alpha^2\Var(h^D_t(x))}\,r^D(x)^2\textd x,
\end{equation}
where~$\alpha:=2/\sqrt{g}=\sqrt{2\pi}$ and $r^D(x)$ is (for~$D$ simply connected) the conformal radius of~$D$ from~$x$; see \eqref{E:3.3} for an explicit formula.

Thanks to \cite[Corollary~5.8 and Remark~5.9]{JS} and \cite[Theorem~10 and Definition~11]{DRSV2} (see also \cite[Theorem~2.8]{Powell}), there exists a random measure~$Z^D$ on~$D$ such that $Z_t^D(\rmd x) \to Z^D(\rmd x)$ weakly in probability as $t\to\infty$. Moreover,  \cite[Theorem~2.9]{BL2} shows that~$Z^D$ satisfies~\eqref{E:1.4}. In order to step away from the specifics, we put forward:


\begin{definition}
	\label{d:1}
	The critical Liouville Quantum Gravity (cLQG) on~$D$ associated with~$h^D$ is any version of~$Z^D$ that is measurable with respect to~$h^D$.
\end{definition}

\noindent
Let~$\overline D$  stand for  the closure of~$D$ and  let us use  $\overline\R:=\R\cup\{+\infty,-\infty\}$  to denote  the two-point compactification of $\R$. Our main result~is~then:

\begin{theorem}[Near-extremal process convergence]
	\label{thm-A}
	Let~$D\in\mathfrak D$ and let~$\{D_N\}_{N\ge1}$ be  defined by  \eqref{E:2.1}. Given a sample~$h^{D_N}$ of the DGFF in~$D_N$, define $\zeta_N^D$ by~\eqref{e:1.3} with $\alpha:=2/\!\sqrt{g}$ and regard~$h^{D_N}$ as an element of~$\R^{\mathcal{M}_\cc(D)}$ via
	\begin{equation}
		h^{D_N}(\rho):=\int_D  \rho(\rmd x) \,h^{D_N}_{\lfloor xN\rfloor},\quad \rho\in  \mathcal{M}_\cc(D). 
	\end{equation}
	Let~$\eta^D_N$ be the measure on the left of \eqref{E:1.3}.
	Then, relative to the vague topology on the space of Radon measures on~$\overline D\times\overline\R$, resp., $\overline D\times (\R\cup\{+\infty\})$
	for~$\zeta^D_N$, resp.,~$\eta^D_N$ and the product topology on~$\R^{\mathcal{M}_\cc(D)}$ for~$h^{D_N}$,
	\begin{equation}
		\label{E:2.10}
		\bigl(\zeta^D_N,\eta^D_N,h^{D_N}\bigr)\,\,\underset{N \to \infty}{\Lawarrow}\,\, (\zeta^D,\eta^D,h^D) \,,
	\end{equation}
 where the objects on the right-hand side are as follows:~$h^D$ is a CGFF in~$D$ (according to Definition~\ref{def-2.1}) and, for a version~$Z^D$ of cLQG associated with~$h^D$  as in Definition~\ref{d:1}, $\zeta^D$ is the measure 
	\begin{equation}
		\label{E:2.3}
		\zeta^D (\textd x\,\textd t) :=   c_\star\,Z^D(\textd x) \otimes \Ind_{[0,\infty)}(t)\,t\rme^{-t^2/(2g)}\,\textd t  \end{equation}
  with  $c_\star:=\frac1{2\sqrt{g}}\,\texte^{2c_0/g}$ (where~$c_0$ and~$g$ are as in~\eqref{E:2.3a})  while  $\eta^D$ is  the  point process  on the right of \eqref{E:1.3}  whose points are  drawn according to~\eqref{E:1.3a},  conditional on the sample of~$Z^D$ as above. 
\end{theorem}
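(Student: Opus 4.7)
My plan is to establish the joint convergence in \eqref{E:2.10} via tightness plus identification of joint Laplace functionals. Tightness of the triple is straightforward: $\zeta_N^D$ and $\eta_N^D$ are Radon measures on locally compact spaces (tightness is controlled via vague compactness and standard DGFF tail bounds), while $h^{D_N}$ is tight in $\R^{C_0(D)}$ because $\Var(h^{D_N}(f))\to\sigma_f^2$ for each $f\in C_0(D)$. The extremal marginal $\eta_N^D\Lawarrow\eta^D$ is proved in \cite{BL1,BL2,BL3}, while the spatial marginal of $\zeta_N^D$ (the lattice-cLQG measure~\eqref{e:1.5}) converges to $Z^D$ by \cite[Theorem~5.13]{RV-review} and \cite[Theorem~1.1]{JS}. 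The work lies in (i) upgrading these to joint convergence with $h^{D_N}$, (ii) producing the Rayleigh factor in the height marginal of $\zeta_N^D$, and (iii) identifying the limiting $Z^D$ with the cLQG associated to $h^D$.

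The backbone of the argument is a mesoscopic scale decomposition of the DGFF via the Gibbs--Markov property on a grid of spacing $N/K$ for large fixed $K$: one writes $h^{D_N}=h^{D_N,K}+\tilde h^{D_N,K}$, where $h^{D_N,K}$ is the harmonic (``binding'') extension from the partition boundary and $\tilde h^{D_N,K}$ is an independent family of DGFFs in the cells. The smoothed test $h^{D_N}(f)$ is essentially determined by $h^{D_N,K}$ up to a residual of variance $O(1/K)$, and $h^{D_N,K}(f)$ converges to $h^D(f)$ in the double limit $N\to\infty$, $K\to\infty$. Since the convergence $\eta_N^D\Lawarrow\eta^D$ in \cite{BL3} is itself obtained through such a decomposition, joint convergence of $(\eta_N^D,h^{D_N})$ can be read off from the same construction, with $Z^D$ appearing as the limit of an exponential functional of the binding field that matches the DRSV definition \eqref{E:2.7}.

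For the near-critical process I would test against $F(x,t)=f(x)g(t)$ with $g\in C_c((0,\infty])$ and compute the first moment
\begin{equation*}
E\bigl[\zeta_N^D(F)\bigr]=\frac1{\log N}\sum_{x\in D_N}f(x/N)\,E\Bigl[\rme^{\alpha(h_x^{D_N}-m_N)}\,g\bigl((m_N-h_x^{D_N})/\sqrt{\log N}\bigr)\Bigr].
\end{equation*}
Substituting the Gaussian density of $h_x^{D_N}$ together with the diagonal Green-function asymptotics $G^{D_N}(x,x)=g\log N-g\log r^D(x/N)+c_0+o(1)$ (Lemma~\ref{lemma-3.2}), and changing variables to $t=(m_N-h_x)/\sqrt{\log N}$, the weight $\rme^{\alpha(h_x-m_N)}$ cancels the linear-in-$t$ exponent coming from the Gaussian, leaving the Rayleigh factor $t\rme^{-t^2/(2g)}$, the conformal-radius density $r^D(x)^2$, and the constant $c_\star=\frac1{2\sqrt g}\rme^{2c_0/g}$ from collecting logarithmic prefactors. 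A second-moment computation, with the sum over pairs split by mesoscopic distance and controlled by the binding-field decomposition, then yields $L^2$ convergence of $\zeta_N^D(F)$; combined with the uniqueness of the critical LQG measure from \cite{JS,Powell}, this identifies the limit as $c_\star\,Z^D\otimes t\rme^{-t^2/(2g)}\textd t$ with $Z^D$ the cLQG of $h^D$.

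The main obstacle I anticipate is the second-moment analysis. The scale $h_x\approx m_N-r\sqrt{\log N}$ sits precisely at the crossover between the extremal regime of \cite{BL3} (where $h_x-m_N=O(1)$) and the intermediate regime of \cite{BL4} (where $h_x/\log N\to 2\sqrt g\lambda$ with $\lambda<1$ fixed), so neither toolkit applies as a black box. One must show that contributions from distinct extremal clusters decouple (producing the product structure $Z^D\otimes$ Rayleigh) while contributions from within the same cluster self-average, so that the near-extremal height marginal is a deterministic Rayleigh density rather than a Poisson-like fluctuation. This self-averaging is ultimately what makes $\zeta^D$ measurable with respect to $h^D$, in sharp contrast to the Poissonian $\eta^D$.
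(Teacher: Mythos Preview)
Your proposal has a genuine gap in the first-moment computation, and this propagates to invalidate the entire $L^2$ strategy.

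You claim that substituting the Gaussian density of $h_x^{D_N}$ and changing variables to $t=(m_N-h_x)/\sqrt{\log N}$ leaves the Rayleigh factor $t\,\texte^{-t^2/(2g)}$. This is not so: the Gaussian computation only produces $\texte^{-t^2/(2g)}$. Writing $n=\log N$, Lemma~\ref{lem:Gaussian} gives
\begin{equation*}
\texte^{\alpha(h_x-m_N)}\,\BbbP\Bigl(\tfrac{m_N-h_x^{D_N}}{\sqrt n}\in\textd t\Bigr)
=\bigl(1+o(1)\bigr)\,\frac{1}{\sqrt{2\pi g}}\,N^{-2}\,n^{3/2}\,r^D(x/N)^2\,\texte^{2c_0/g}\,\texte^{-t^2/(2g)}\,\textd t,
\end{equation*}
with no linear factor of~$t$. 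Summing over $x\in D_N$ and dividing by~$n$ therefore yields $E[\zeta_N^D(F)]\asymp\sqrt n\to\infty$. This is consistent with the fact, noted explicitly in the paper, that $Z^D$ lacks a first moment; the convergence of $\langle\zeta_N^D,f\rangle$ cannot hold in~$L^1$, let alone in~$L^2$. (Incidentally, your Green-function expansion has the wrong sign on the $\log r^D$ term; cf.\ \eqref{eq:Green}.)

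The missing ingredient is the \emph{ballot probability}. In the paper the first moment is computed only on the truncation event $\{h^{D_N}\le m_N+u\}$; conditioning on $h^{D_N}_x=m_N-t\sqrt n$ then brings in
\begin{equation*}
\BbbP\bigl(h^{D_N}\le m_N+u\,\big|\,h^{D_N}_x=m_N-t\sqrt n\bigr)
=\bigl(2+o(1)\bigr)\,\frac{\cL^x(u)\,\cR(u+t\sqrt n)}{g\,n}
\sim \frac{2\,\cL^x(u)\,t}{g\sqrt n}
\end{equation*}
via Lemma~\ref{prop:asymptotic}. This factor simultaneously kills the $\sqrt n$ divergence and supplies the Rayleigh~$t$; it is an entropic-repulsion effect that is invisible at the level of a one-site Gaussian calculation. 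The paper then matches truncated first and second moments (Propositions~\ref{lem:first-moment}--\ref{lem:second-moment}) into a Laplace-transform tail asymptotic (Lemma~\ref{lem:tilted}), and identifies subsequential limits through the axiomatic characterization of cLQG in Theorem~\ref{BL2-thm2.8} and the pathwise-uniqueness Theorem~\ref{thm-3.10}. Quoting \cite{JS,Powell} as a black box does not suffice here: those results show that different \emph{regularizations} of the CGFF yield the same cLQG, but do not by themselves identify an abstractly-given random measure (arising as a subsequential limit of $\zeta_N^D$) with cLQG, nor do they couple it pathwise to the $Z^D$ governing~$\eta^D$.
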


In order to elucidate the structure of the joint law of the limiting triplet in \eqref{E:2.10},  note that the proof of Theorem~\ref{thm-A} actually shows 
\begin{equation}
\begin{aligned}
	E\bigl(&\texte^{-\langle\zeta^D,f_1\rangle-\langle\eta^D, f_2\rangle+ h^D(\rho)}\bigr)
	\\
	&=\texte^{\frac12\langle \rho , \wh G^D \rho\rangle}\,E\biggl(\exp\Bigl\{-c_\star\int_D Z^D(\textd x)\texte^{\alpha (\wh G^D\rho)(x)}\int_0^\infty\textd t\, t\texte^{-\frac{t^2}{2g}}\,f_1(x,t)\qquad\qquad\quad
	\\
	&\qquad\qquad-\bar c\int_D Z^D(\textd x)\texte^{\alpha (\wh G^D\rho)(x)}\int_{\R}\textd h\,\texte^{-\alpha h}\int_{\R^{\Z^2}}\nu(\textd\phi)\bigl(1-\texte^{- \sum_{z\in\Z^2}f_2(x,h-\phi_z)}\bigr)\Bigr\}\biggr),
\end{aligned}
\end{equation}
 for all continuous~$f_1,f_2\colon D\times\R\to[0,\infty)$ with compact support and all~$ \rho\in \mathcal{M}_\cc(D)$. Here the expectation on the right is with respect to~$Z^D$ and~$\wh G^D\rho$ is a function on~$D$ defined by $(\wh G^D\rho)(x):=\int_D \wh G^D(x,y)\rho(\textd y)$.

Observe that the limiting near-extremal process~$\zeta^D$ is a measurable function of~$Z^D$ in~\eqref{E:2.3}, which in turn is a measurable function of the field~$h^D$. However, the extremal process~$\eta^D$ contains an additional degree of randomness due to Poisson sampling in~\eqref{E:1.3a} and is thus \emph{not} measurable with respect to~$h^D$.

\begin{remark}
	The above conclusions assume a very specific (albeit canonical) discretization \eqref{E:2.1} of the continuum domain~$D\in\mathfrak D$.
	This is  stricter than (but included in)  the discretizations considered in \cite{BL1,BL2,BL3,BL4} where only
	\begin{equation}
		\label{E:2.1a}
		D_N\subseteq \bigl\{x\in\Z^2\colon \textd_\infty(\ffrac xN,D^\cc)>\ffrac1N\bigr\}
	\end{equation}
	and, for each~$\delta>0$,
	\begin{equation}
		\label{E:2.2a}
		\bigl\{x\in\Z^2\colon \textd_\infty(\ffrac xN,D^\cc)>\delta\bigr\}\subseteq D_N
	\end{equation}
	were assumed. As it turns out, the limit in Theorem~\ref{thm-A} remains in effect  even under the above weaker form of discretization, 
	provided that the convergence is claimed only for test functions compactly supported in~$D$, not~$\overline D$. The restriction on the support of the test function stems from our inability to control the tightness of $\zeta_N^D$ when~$D_N$ has  a large number of small ``holes'' near the boundary. 
\end{remark}

Since the second component of the measure in \eqref{E:2.3} is deterministic, Theorem~\ref{thm-A} implies the following limit result:

\begin{corollary}
	\label{cor-2.5}
	For all $\kappa\in\R$ with $0<\kappa<\tfrac1{2g}$  and all continuous functions $\Phi\colon\R\to\R$ for which
	$t\mapsto \Ind_{[0,\infty)}(t)\Phi(t)\rme^{-\kappa t^2}$
	is Lebesgue integrable, we have  
	\begin{equation}
		\label{E:2.14}
		\frac1{\log N}\sum_{x\in D_N}
		\Phi\Bigl(\frac{m_N-h^{D_N}_x}{\sqrt{\log N}}\Bigr)\,
		\texte^{\alpha(h^{D_N}_x-m_N)}\delta_{x/N}
		\,\,\,\underset{N \to \infty}{\Lawarrow}\,\,\,c(\Phi)\,Z^D 
	\end{equation}
	relative to the vague topology on  Radon measures on  $D$, where $c(\Phi):=c_\star\int_0^\infty\Phi(t)t\texte^{-t^2/(2g)}\textd t$.
\end{corollary}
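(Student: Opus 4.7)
By linearity it suffices to establish $\int f(x)\Phi(t)\, d\zeta_N^D(x,t)\Lawarrow c(\Phi)\int f\, dZ^D$ for each test function $f\in C(\overline D)$; the vague convergence on $\overline D$ in \eqref{E:2.14} will then follow. Since the $t$-component of $\zeta^D$ in \eqref{E:2.3} is deterministic, integrating a product test function $(x,t)\mapsto f(x)\Phi(t)$ against $\zeta^D$ produces exactly $c(\Phi)\int f\, dZ^D$. The plan is to truncate $\Phi$ in $t$, apply the vague convergence from Theorem~\ref{thm-A} on the compact space $\overline D\times\overline\R$, and then pass to $R\to\infty$ by controlling the tail.

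Pick a continuous cutoff $\chi_R\colon\R\to[0,1]$ equal to $1$ on $[-R,R]$ and supported in $[-R-1,R+1]$, and set $F_R(x,t):=f(x)\Phi(t)\chi_R(t)$. Then $F_R$ extends continuously to $\overline D\times\overline\R$ with value $0$ at $t=\pm\infty$, so Theorem~\ref{thm-A} gives
\[
\int F_R\, d\zeta_N^D \,\Lawarrow\, c_\star\Bigl(\int f\, dZ^D\Bigr)\int_0^\infty \Phi(t)\chi_R(t)\, t\, e^{-t^2/(2g)}\, dt.
\]
The assumption $\kappa<1/(2g)$ yields the pointwise bound $t\, e^{-t^2/(2g)}\le C_\kappa e^{-\kappa t^2}$ on $[0,\infty)$, so the integrability hypothesis together with $Z^D(D)<\infty$ a.s.\ lets us pass $R\to\infty$ on the right by dominated convergence, the limit being $c(\Phi)\int f\, dZ^D$ almost surely.

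To interchange the $R\to\infty$ and $N\to\infty$ limits on the left, it remains to show
\[
\lim_{R\to\infty}\,\limsup_{N\to\infty}\,\bbP\Bigl(\,\Bigl|\int f(x)\Phi(t)(1-\chi_R(t))\, d\zeta_N^D(x,t)\Bigr|>\varepsilon\Bigr)=0.
\]
The contribution of $\{t\le -R\}$ forces some $x\in D_N$ with $h_x^{D_N}\ge m_N+R\sqrt{\log N}$, whose probability tends to $0$ as $N\to\infty$ for any fixed $R>0$ by tightness of the centered DGFF maximum. For $\{t\ge R\}$, decompose into unit slices $J_k:=[R+k,R+k+1)$, $k\ge 0$, and bound the integral by $\|f\|_\infty\sum_{k\ge 0}M_k\,\zeta_N^D(\overline D\times J_k)$ with $M_k:=\sup_{J_k}|\Phi|$; each slice is controlled in law by Theorem~\ref{thm-A} to $c_\star Z^D(D)\int_{J_k}t\, e^{-t^2/(2g)}\, dt$. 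The integrability hypothesis, continuity of $\Phi$, and the gap $1/(2g)-\kappa>0$ then yield the required summability of the bounding series and its convergence to $0$ as $R\to\infty$. This last tail estimate is the main obstacle: because $\bbE[Z^D(D)]=\infty$, first-moment bounds on $\zeta_N^D$ diverge as $N\to\infty$, so the uniform control must come from the in-law slice convergence combined with the strict subcriticality $\kappa<1/(2g)$, which allows absorbing $\sup_{J_k}|\Phi|$ into the Gaussian decay of the limit measure.
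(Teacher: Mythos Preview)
Your truncation-plus-tail strategy is the same as the paper's, and the compactly supported part is fine. The gap is in the tail estimate for $\{t\ge R\}$. You propose to control $\sum_{k\ge0}M_k\,\zeta_N^D(\overline D\times J_k)$ using the in-law convergence of each slice $\zeta_N^D(\overline D\times J_k)$ from Theorem~\ref{thm-A}. But convergence in law of individual slices does not yield a bound on the infinite weighted sum that is uniform in~$N$: you would need either joint convergence of the entire sum (which requires the test function $t\mapsto|\Phi(t)|1_{[R,\infty)}(t)$ to extend continuously to~$\overline\R$, and it need not since $|\Phi(t)|$ may grow like $e^{\kappa t^2}$) or a separate argument that the far tail $\sum_{k>L}M_k\,\zeta_N^D(\overline D\times J_k)$ is small uniformly in~$N$ for large~$L$, which is exactly the problem you started with. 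Tightness of the total mass $\zeta_N^D(\overline D\times\overline\R)$ does not help either, because the weights $M_k$ are unbounded.

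The paper closes this gap by reverting to the truncated first moment: on $\{h^{D_N}\le m_N+u\}$ one has the explicit bound (cf.\ \eqref{e:5.24} and the proof of Proposition~\ref{lem:first-moment})
\[
\bbE\Bigl(\int_{\{t\ge M\}}|\Phi(t)|\,\textd\zeta_N^D\,;\,h^{D_N}\le m_N+u\Bigr)\le c(u)\int_M^\infty|\Phi(s)|\,e^{-s^2(\frac1{2g}-\epsilon)}\,\textd s,
\]
valid for any $\epsilon>0$ and all large~$N$. Choosing $\epsilon<\tfrac1{2g}-\kappa$ makes the right-hand side finite and vanishing as $M\to\infty$ by your integrability hypothesis; the complementary event $\{\max h^{D_N}>m_N+u\}$ is handled by Lemma~\ref{lemma-DGFF-tail}. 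This is precisely the input you identified as unavailable (``first-moment bounds on $\zeta_N^D$ diverge''), but the point is that the \emph{truncated} first moment is finite and carries the Gaussian decay in~$t$ that your slice argument tries to extract from the limit.
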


\noindent
This  corollary  explains why the various known ways to ``norm'' the lattice-cLQG measure lead to the same limiting object, except for an overall (deterministic) normalization constant. Besides $\Phi(t)=1$, which reduces to \eqref{e:1.5}, the choice $\Phi(t):=t$ leads to the so called derivative martingale.

The near-extremal process will likely find its natural counterparts in the studies of other logarithmically-correlated fields. This includes the problem of the most visited site for random walks in planar domains; see~\cite{AB,ABL} and~\cite{Jego} for recent  results on the ``thick points'' in that context. Our conclusions are also  used in a companion paper~\cite{BGL} to control fractal properties of the~cLQG.

The remainder of the present paper is organized as follows. In Section~\ref{sec3} we recall a number of known facts about the DGFF, its extreme values and connection to the ballot theorems. Section~\ref{sec-4a}  establishes useful conditions for identifying random measures with the cLQG. Section~\ref{sec4} is devoted to calculations and estimates of moments of the measure~$\zeta_N^D$ which are then used, in Section~\ref{sec5}, to prove Theorem~\ref{thm-A}  and Corollary~\ref{cor-2.5}. 

\section{Preliminaries}
\label{sec3}
\noindent 
The exposition of the proofs starts by collecting various preliminary facts about the Green functions, DGFF and its extreme values and the ballot estimates for the DGFF. An uninterested (or otherwise informed) reader may consider skipping this section and returning to it only when the stated facts are used in later proofs. 

\subsection{Green function}
 We start by discussing the relevant aspects of the Green function which, in light of it being the covariance of the DGFF, plays an important role throughout this work. First we need  some notation. 
Some of the estimates in what follows will hold only in domains slightly smaller than~$D$, and so for each $\delta \geq 0$, we set
\begin{equation}
	D^\delta:=\{x\in D\colon \rmd_\infty(x,D^\cc)>\delta\} \,.
\end{equation}
We will write $D_N^\delta$ for the discretization of $D^\delta$ as specified in~\eqref{E:2.1}.

 Next, in order to  address the large-$N$ behavior of the DGFF, we need
the large-$N$ behavior of the Green function~$G^{D_N}$ in lattice domain~$D_N$ approximating a continuum domain~$D\in\fU$ via \eqref{E:2.1}. For this, let~$\Pi^D(x,\cdot)$ be the harmonic measure on~$\partial D$, defined, e.g., as the exit distribution from~$D$ of a Brownian motion started at $x$. 
Writing $\|\cdot\|_2$  for  the Euclidean norm on~$\R^2$, for each $x\in D$ we set
\begin{equation}
	\label{E:3.3}
	r^D(x):=\exp\left\{\int_{\partial D} \Pi^D(x,\rmd z)\log\|x-z\|_2\right\} 
\end{equation}
 and let~$r^D(x):=0$ for~$x\not\in D$.
 As is readily checked,  for~$D$ simply connected, this coincides with the  notion of  conformal radius of~$D$ from~$x$.
The continuum Green function can explicitly be  given by 
\begin{equation}
	\label{E:3.6}
	\wh G^D(x,y) :=-g\log\|x-y\|_2+g\int_{\partial D} \Pi^D(x,\rmd z)\log\|y-z\|_2,
\end{equation}
where~$g$ is as in \eqref{E:2.3a}. We then have:

\begin{lemma}[Green function asymptotic]
	\label{lemma-3.2}
	For all~$D\in\mathfrak D$, all sequences of domains $\{D_N\}_{N\ge1}$ obeying \twoeqref{E:2.1a}{E:2.2a} and  all $\delta>0$,
	\begin{equation}
		\label{eq:Green}
		\lim_{N\to\infty}\,\sup_{x\in D^\delta}\,\Bigl|\, G^{D_N}\bigl(\lfloor xN\rfloor,\lfloor x N\rfloor\bigr)-g\log N-c_0- g\log r^D(x)\Bigr|=0\,,
	\end{equation}
	where $g$ and $c_0$ are the constants from~\eqref{E:2.3a}. In addition, for all $\delta>0$,
	\begin{equation}
		\lim_{N\to\infty}\,\sup_{\begin{subarray}{c}
				x,y\in D^\delta\\|x-y|>\delta
		\end{subarray}}
		\Bigl|\,G^{D_N}\bigl(\lfloor xN\rfloor,\lfloor y N\rfloor\bigr)-\wh G^D(x,y)\Bigr|=0.
	\end{equation}
\end{lemma}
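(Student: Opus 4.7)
The plan is to reduce everything to the classical expansion of the potential kernel $\mathfrak{a}\colon\Z^2\to[0,\infty)$ of the simple random walk on $\Z^2$, which satisfies
\[ \mathfrak{a}(z) \,=\, g\log\|z\|_2 + c_0 + O(\|z\|_2^{-2}), \quad \|z\|_2\to\infty, \]
with $g$ and $c_0$ as in~\eqref{E:2.3a} (see, e.g., Spitzer's book or Lawler--Limic). For any finite connected $V\subsetneq\Z^2$ and $x,y\in V$, one has the identity
\[ G^V(x,y) \,=\, -\mathfrak{a}(x-y) + E^x\bigl[\mathfrak{a}(S_{\tau_{V^\cc}}-y)\bigr], \]
where $S$ is the simple random walk and $\tau_{V^\cc}$ its first-exit time from $V$. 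This is where $g\log N$ and $c_0$ in the stated conclusion enter.

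Next I would invoke the Donsker invariance principle together with the regularity of~$\partial D$ (each boundary component has positive Euclidean diameter, hence is Wiener-regular for Brownian motion) to argue that, for each $x\in D^\delta$, the law of $S_{\tau_{D_N^\cc}}/N$ under $P^{\lfloor xN\rfloor}$ converges weakly, as $N\to\infty$, to the harmonic measure $\Pi^D(x,\cdot)$. Uniformity in $x\in D^\delta$ is obtained from compactness of $\overline{D^\delta}$ together with equicontinuity of the maps $x\mapsto\Pi^{D_N}(\lfloor xN\rfloor,\cdot)$ (a Harnack-type or coupling input) and continuity of $x\mapsto\Pi^D(x,\cdot)$.

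Given these two inputs, the diagonal claim follows readily. For $x\in D^\delta$ one has $\|S_\tau-\lfloor xN\rfloor\|_2\ge\delta N-O(1)$ since $S_\tau\in\partial D_N$, so the potential-kernel expansion applies uniformly to give
\[ G^{D_N}(\lfloor xN\rfloor,\lfloor xN\rfloor) \,=\, c_0 + g\log N + g\,E^{\lfloor xN\rfloor}\bigl[\log\|S_\tau/N-\lfloor xN\rfloor/N\|_2\bigr] + o(1), \]
and the remaining expectation converges, uniformly in~$x$, to $\int_{\partial D}\Pi^D(x,\textd z)\log\|x-z\|_2=\log r^D(x)$ by bounded convergence (the integrand is sandwiched between $\log(\delta-1/N)$ and $\log\diam(D)$). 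The off-diagonal claim is analogous: from
\[ G^{D_N}(\lfloor xN\rfloor,\lfloor yN\rfloor) \,=\, -\mathfrak{a}\bigl(\lfloor xN\rfloor-\lfloor yN\rfloor\bigr) + E^{\lfloor xN\rfloor}\bigl[\mathfrak{a}(S_\tau-\lfloor yN\rfloor)\bigr], \]
the first term contributes $-c_0-g\log N-g\log\|x-y\|_2+o(1)$ because $\|x-y\|_2>\delta$, while the second contributes $c_0+g\log N+g\int_{\partial D}\Pi^D(x,\textd z)\log\|y-z\|_2+o(1)$ by the same bounded-convergence argument (now $y\in D^\delta$ bounds $\|S_\tau-\lfloor yN\rfloor\|_2$ from below). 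The cancellation of $c_0$ and $g\log N$ and the definition~\eqref{E:3.6} yield $\widehat G^D(x,y)$.

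The main technical obstacle is upgrading the weak convergence of the exit distributions to a statement that is \emph{uniform} in the starting point across $D^\delta$; everything else reduces to bookkeeping around the potential kernel asymptotic and dominated convergence. I would expect to cite the appropriate uniform version from the random-walk literature, or from the authors' prior work (e.g.~\cite{BL1}), rather than reproving it from scratch here.
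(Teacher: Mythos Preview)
Your proposal is correct and follows essentially the same route as the paper: the paper's proof is nothing more than a citation to the potential-kernel asymptotic (Lawler--Limic, Theorem~4.4.4), the Green-function representation $G^V(x,y)=-\mathfrak a(x-y)+E^x[\mathfrak a(S_\tau-y)]$ (Lawler--Limic, Proposition~4.6.2), and the uniform convergence of the discrete exit distributions to the continuum harmonic measure (Lemma~A.1 of~\cite{BL2}), with \cite{B-notes} pointed to for the full write-up. You have correctly identified the one non-trivial ingredient---the uniform-in-$x$ weak convergence of exit measures---and that this is what one would cite rather than reprove.
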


\begin{proof}
	This follows  from, e.g.,  Theorem~4.4.4 and Proposition~4.6.2
	of~\cite{Lawler-Limic} in conjunction with~\cite[Lemma~A.1]{BL2}. See \cite[Theorem~1.17]{B-notes} for a full proof.
\end{proof}

We will also need  the following bounds: 

\begin{lemma}
	\label{lemma-3.2a}
	For each~$D\in\mathfrak D$ there exists~$c=c(D)>0$ such that,  with  domains $\{D_N\}_{N\ge1}$ as in \eqref{E:2.1},
	\begin{equation}
		\label{E:3.6a}
		g\log \rmd_\infty(x, (D_N)^\cc)  -c\le G^{D_N}(x,x)\le g\log \rmd_\infty(x,(D_N)^\cc)+c
	\end{equation}
 holds for all~$N\ge1$ and all~$x\in D_N$. 
\end{lemma}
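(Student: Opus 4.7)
The natural starting point is the identity
\[
G^{D_N}(x,x) = \E^x\bigl[\,a(S_\tau - x)\,\bigr],
\]
where $a$ is the potential kernel of simple random walk on~$\Z^2$ (normalised so that $a(0)=0$ and $(P-I)a = \delta_0$) and $\tau := \tau_{(D_N)^\cc}$ is the first exit time from~$D_N$. This is a direct consequence of optional stopping applied to the martingale $a(S_n - x) - \#\{k<n\colon S_k = x\}$. Combined with the asymptotic $a(y) = g\log\|y\|_2 + c_0 + O(\|y\|_2^{-2})$ (as used in the proof of Lemma~\ref{lemma-3.2}), the claim reduces to showing
\[
\E^x\bigl[\log\|S_\tau - x\|_2\bigr] = \log d_\infty\bigl(x,(D_N)^\cc\bigr) + O(1),
\]
with the $O(1)$ uniform in $x\in D_N$ and~$N\ge 1$, but possibly depending on~$D$.

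For the lower bound in \eqref{E:3.6a}, set $r := d_\infty(x,(D_N)^\cc)$, so that the lattice $\ell^\infty$-ball of radius~$r$ around~$x$ lies in~$D_N$. Domain monotonicity of the Green function gives $G^{D_N}(x,x) \ge G^{B(x,r)\cap\Z^2}(x,x)$, and applying the same potential-kernel identity to this box yields $G^{B(x,r)\cap\Z^2}(x,x) = g\log r + O(1)$, since upon first exit from the box the walk sits at Euclidean distance in the range $[r,\sqrt{2}\,r + O(1)]$ from~$x$.

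The upper bound is the real content. The layer-cake formula gives
\[
\E^x\bigl[\log\|S_\tau - x\|_2\bigr] \le \log r + \int_r^\infty P^x\bigl(\|S_\tau - x\|_2 > R\bigr) R^{-1}\,\textd R,
\]
and I would control the tail by a discrete Beurling-projection estimate: letting $\cL\subseteq(D_N)^\cc$ denote the connected lattice component of $(D_N)^\cc$ realising the distance~$r$ from~$x$, if $\diam(\cL) \ge R$ then
\[
P^x\bigl(\|S_\tau - x\|_2 > R\bigr) \le P^x\bigl(\tau_{B(x,R)^\cc} < \tau\bigr) \le C_1\,(r/R)^{1/2}.
\]
Since every $D\in\mathfrak D$ has boundary components of positive Euclidean diameter, the canonical discretisation \eqref{E:2.1} ensures that the components of $(D_N)^\cc$ inherit diameter at least $\epsilon(D)\,N$ once~$N$ is large enough; this makes the Beurling bound valid throughout $R \in [r,\epsilon N]$, and for $R$ exceeding the diameter of~$D_N$ the probability vanishes outright. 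Integrating yields a uniform $O(1)$ contribution from the tail.

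The main technical obstacle is justifying the Beurling step for the particular lattice complements arising from~\eqref{E:2.1}: one must check that the component of $(D_N)^\cc$ nearest to any given $x\in D_N$ inherits macroscopic ($\ge\epsilon(D) N$) diameter from the corresponding component of $\partial D$. This amounts to a routine Hausdorff-approximation argument using the positive-diameter assumption built into the definition of~$\mathfrak D$, together with the defining property of \eqref{E:2.1} that $D_N^\cc$ lies within $\ell^\infty$-distance~$1$ of~$ND^\cc$. The finitely many small values of~$N$ where this approximation step may fail can be absorbed into the constant $c=c(D)$.
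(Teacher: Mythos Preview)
Your proof is correct and tracks the paper's closely: both start from the potential-kernel identity $G^{D_N}(x,x)=\E^x[\fraka(S_\tau-x)]$, handle the lower bound by domain monotonicity, and for the upper bound exploit that the component of $(D_N)^\cc$ nearest to~$x$ has diameter at least $\epsilon(D)N$ to control the probability that the walk escapes to distance~$R$ before exiting~$D_N$. The only difference is in how that last estimate is packaged. You invoke the discrete Beurling projection theorem as a black box to get $(r/R)^{1/2}$ decay, whereas the paper runs a self-contained annulus argument: in each dyadic shell~$A_j$ the walk has a uniformly positive chance of making a loop separating the two boundaries of~$A_j$ (and thus hitting the boundary component that crosses that shell), which gives geometric decay of the harmonic measure $H^{D_N}(x,A_k\cap\partial D_N)$ in~$k-k_0$. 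The paper's argument is in effect a hands-on proof of the Beurling-type estimate tailored to this situation; either decay rate is more than enough for the tail integral to be~$O(1)$. One small point to make explicit in your write-up: the intermediate range $R\in[\epsilon N,\diam(D_N)]$, where the Beurling hypothesis $\diam(\mathcal L)\ge R$ may fail but the probability has not yet vanished, still needs the trivial bound $P\le1$ to contribute $\int_{\epsilon N}^{CN}R^{-1}\,\textd R=\log(C/\epsilon)=O_D(1)$.
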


\begin{proof}
 The proof uses the properties of two-dimensional simple random walk along with the particular form we discretize~$D$. Let~$\fraka$ denote the potential kernel of simple random walk  on~$\Z^2$. Using the constants in \eqref{E:2.3a}, the asymptotic~form 
	\begin{equation}
		\label{E:3.7}
		\fraka(x)=g\log\Vert x\Vert_2+c_0+O(\Vert x\Vert_2^{-2}),\quad\Vert x\Vert_2\to\infty,
	\end{equation}
	 holds  by, e.g.,~\cite[Theorem 4.4.4]{Lawler-Limic}.
	 Next, let us write~$H^U(x,z)$  for the probability that simple random walk started at~$x\in U$ exits~$U$ at~$z$. The diagonal Green function admits the representation
	\begin{equation}
		\label{E:3.6b}
		G^{D_N}(x,x)=\sum_{z\in\partial D_N}H^{D_N}(x,z)\fraka(x-z),
	\end{equation}
	by, e.g.,~\cite[Theorem~4.6.2]{Lawler-Limic}. 
	 Denoting  $A_k:=\{z\in\Z^2\colon \texte^k\le \rmd_\infty(x,z)<  \texte^{k+1}\}$  and using the fact that norms on~$\R^2$ are comparable we then get
	\begin{equation}
		\label{E:3.8a}
		\Bigl|G^{D_N}(x,x)- g\log \rmd_\infty(x,(D_N)^\cc) \Bigr|
		\le \sum_{k=k_0}^{ k_1}
		H^{D_N}(x, A_k\cap \partial D_N) g(k-k_0) + c',
	\end{equation}
	where  $k_0 := \lfloor \log \rmd_\infty(x,(D_N)^\cc)\rfloor$ and  $k_1:=\lceil\log (N\diam D)\rceil$, and where  $c'<\infty$ is a constant.  It remains to estimate the sum uniformly in all parameters.
	
	\newcounter{obrazek}
	
	\begin{figure}[t]
		\refstepcounter{obrazek}
		\centerline{\includegraphics[width=3truein]{./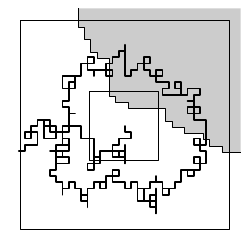}}
		\caption{An illustration of event~$E_j$ in the proof of Lemma~\ref{lemma-3.2a}. A path of  the  simple random walk started at the center point winds around the annulus~$A_j$ before exiting through the outer perimeter of~$A_j$. The path necessarily crosses the shaded area which represents the complement of~$D_N$.}
		\label{fig1}
	\end{figure}

	 For $j\ge0$, let $E_j$  denote the event that, for simple random walk started at~$x$,  the trajectory separates the inner and the outer boundaries of~$A_j$ before first hitting the outer boundary of $A_j$. 
	By coupling to Brownian motion and using  its  scale invariance,  there exists  $\tilde c>0$ such that~$E_j$ occurs with probability at least $1-\rme^{-\tilde c}$ for all~$j\ge 0$.  Next observe that,  by our assumptions, there exists $\ep>0$ such that each connected component of~$\partial D$ has diameter at least $\ep$.  
	Along with our way to discretize~$D$ into~$D_N$ from~\eqref{E:2.1} this ensures that, for  all~$j$ with $k_0< j<\lfloor\log(\ep N/4)\rfloor$,  the random walk trajectory  must intersect  $\partial D_N\cap A_j$ whenever~$E_j$ occurs.  See Fig.~\ref{fig1} for an illustration. 
	
 Using the strong Markov property successively at the times the walk hits the outer  boundary of~$A_j$, we thus obtain the geometric bound
	\begin{equation}
		\label{E:3.8}
		H^{D_N}(x, A_k\cap \partial D_N)\le\texte^{-\tilde c(\min\{k,k'\}-k_0  -1)} \,,
	\end{equation}
	 for all~$k$ with  $k_0< k<\lfloor\log(\ep N/2)\rfloor$, where $k':=\lfloor\log(\ep N/4)\rfloor$. As $k_1-k'$  is bounded uniformly in $N$  by a constant  depending only on $\ep$,  we can replace $\min\{k,k'\}$ by~$k$ in~\eqref{E:3.8}  at the cost of a multiplicative constant popping up in front of the exponential. 
	Using this the sum~\eqref{E:3.8a} is bounded uniformly as desired.
\end{proof}

We remark that Lemma~\ref{lemma-3.2a} is the reason why we have to work with the discretization \eqref{E:2.1} instead of \twoeqref{E:2.1a}{E:2.2a}, for which the upper bound \eqref{E:3.6a} fails in general.

\subsection{Gibbs-Markov property}
One of the important properties of the DGFF is the behavior under the restriction to a subdomain. As this arises directly from the Gibbsian structure of the law of the DGFF and is similar to the Markov property for stochastic processes, we refer to this as the Gibbs-Markov property, although the term domain-Markov has been used as well.

\begin{lemma}[Gibbs-Markov property]
	\label{lem:Gibbs-Markov}
	Let $\emptyset\ne U\subseteq V\subsetneq\Z^2$ and let~$h^V$ be the DGFF in~$V$. Define
	\begin{equation}
		\varphi^{V,U}(x) := \E\bigl(h^V_x\,\big|\,\sigma(h^V_y\colon y\in V\smallsetminus U)\bigr).
	\end{equation}
	Then
	\begin{equation}
		h^V-\varphi^{V,U}\,\independent\,\varphi^{V,U}\quad\text{and}\quad
		h^V-\varphi^{V,U}\laweq h^U.
	\end{equation}
	Moreover, a.e.\ sample of~$\varphi^{V,U}$ is discrete harmonic on~$U$.
\end{lemma}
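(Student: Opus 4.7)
The plan is to exploit the joint Gaussianity of $(h^V_x)_{x\in V}$. Because $\varphi^{V,U}$ is defined as the conditional expectation of a Gaussian vector, it coincides with the $L^2$-orthogonal projection of~$h^V$ onto the closed subspace spanned by $\{h^V_y\colon y\in V\setminus U\}$. The residual $h^V-\varphi^{V,U}$ is therefore $L^2$-orthogonal to that subspace and, being jointly Gaussian with $\varphi^{V,U}$, is in fact independent of it; this gives the first claim.

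Next I identify $\varphi^{V,U}$ explicitly. Let $(S_k)_{k\ge 0}$ be the simple random walk on~$\Z^2$ with law $\bbP^x$ when started at~$x$, and set $\tau_U:=\inf\{k\ge 0\colon S_k\notin U\}$; with the convention $h^V\equiv 0$ on~$V^\cc$, recurrence of planar SRW ensures $\tau_U\le\tau_V<\infty$ almost surely. I claim that
\[
\varphi^{V,U}(x)=\bbE^x\bigl[h^V_{S_{\tau_U}}\bigr]\ \ (x\in U),\qquad \varphi^{V,U}(x)=h^V_x\ \ (x\in V\setminus U).
\]
Measurability with respect to $\sigma(h^V_y\colon y\in V\setminus U)$ is built in; to verify the orthogonality characterizing the conditional expectation, fix $z\in V\setminus U$ and compute
\[
\Cov\bigl(h^V_x-\bbE^x[h^V_{S_{\tau_U}}],\,h^V_z\bigr)=G^V(x,z)-\bbE^x\bigl[G^V(S_{\tau_U},z)\bigr].
\]
Because $z\notin U$, the function $y\mapsto G^V(y,z)$ is discrete harmonic on~$U$ (and bounded), so the right-hand side vanishes by optional stopping at~$\tau_U$. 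The very same representation exhibits $\varphi^{V,U}$ on $U$ as a harmonic-measure average of boundary values, hence as a discrete harmonic function on~$U$, giving the final claim.

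For the distributional identity, I compute the covariance of the residual on~$U$. By the orthogonality established above, for $x,x'\in U$,
\[
\Cov\bigl(h^V_x-\varphi^{V,U}(x),\,h^V_{x'}-\varphi^{V,U}(x')\bigr)=G^V(x,x')-\bbE^x\bigl[G^V(S_{\tau_U},x')\bigr].
\]
Decomposing the occupation-time representation of $G^V(x,x')$ at~$\tau_U$ and applying the strong Markov property identifies the right-hand side with $G^U(x,x')$, the expected number of visits of $S$ to $x'$ before exiting $U$ when started at~$x$. Since $h^V-\varphi^{V,U}$ vanishes identically on~$V\setminus U$ by construction, its joint law on~$V$ is that of~$h^U$. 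The only substantive point throughout is the applicability of optional stopping to $y\mapsto G^V(y,z)$ at~$\tau_U$, which is routine given uniform boundedness of this function on~$V$ off its singularity at~$z\in V\setminus U$; no deeper obstacle is expected.
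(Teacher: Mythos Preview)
Your proof is correct and is precisely the standard argument for the Gibbs-Markov decomposition of the DGFF; the paper itself does not give a proof but simply cites \cite[Lemma~3.1]{B-notes}, where essentially this same computation is carried out. One minor remark: the discrete Green function $G^V(\cdot,z)$ has no singularity at~$z$ (indeed $G^V(z,z)<\infty$ since $V\subsetneq\Z^2$), so the boundedness needed for optional stopping is simply $G^V(y,z)\le G^V(z,z)$, which follows from the strong Markov property at the first hit of~$z$.
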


\begin{proof}
	See, e.\,g.,~\cite[Lemma~3.1]{B-notes}.
\end{proof}

Consider now two sequences $\{D_N\}_{N\ge1}$, resp., $\{\wt D_N\}_{N\ge1}$ of lattice domains related, via \eqref{E:2.1}, to continuum domains $D$, resp.,~$\wt D$ subject to $\wt D\subseteq D$. While the DGFFs $h^{D_N}$, resp., $h^{\wt D_N}$ do not allow for pointwise limits, the  ``binding''  field $\varphi^{D_N,\wt D_N}$ does, due to controlled behavior of the variances:

\begin{lemma}
	\label{lemma-3.3}
	Let~$D,\wt D\in\mathfrak D$ obey $\wt D\subseteq D$. Define
	\begin{equation}
		\label{e:2.12}
		C^{D,\wt D}(x,y):=g\int_{\partial D}\Pi^D(x,\rmd z)\log\|y-z\|_2
		-g\int_{\partial \wt D}\Pi^{\wt D}(x,\rmd z)\log\|y-z\|_2.
	\end{equation}
	Then $x,y\mapsto C^{D,\wt D}(x,y)$, for $x,y\in\wt D$, is symmetric, positive semi-definite, continuous and harmonic in each variable.
 Moreover, there exists a centered Gaussian process~$\Phi^{D,\wt D}=\NN(0,C^{D,\wt D})$ with continuous and harmonic sample paths.
\end{lemma}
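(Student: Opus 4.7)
The plan starts from the identity
\[
C^{D,\wt D}(x,y)=\wh G^D(x,y)-\wh G^{\wt D}(x,y),\qquad x,y\in\wt D,
\]
which is immediate from \eqref{E:3.6} since the $-g\log\|x-y\|_2$ singularity cancels between the two Green functions. Symmetry of $C^{D,\wt D}$ then reduces to that of $\wh G^D$ and $\wh G^{\wt D}$. For continuity and harmonicity in each variable, I would rewrite
\[
\int_{\partial D}\Pi^D(x,\rmd z)\log\|y-z\|_2 = E^x\bigl[\log\|y-B_{\tau_{D^\cc}}\|_2\bigr],
\]
which, for fixed $y\in\wt D\subseteq D$ (so $y$ lies at positive Euclidean distance from both $\partial D$ and $\partial\wt D$), is the bounded solution to the Dirichlet problem on~$D$ with continuous boundary data $z\mapsto\log\|y-z\|_2$; in particular it is harmonic in~$x\in D$. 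The second term of \eqref{e:2.12} is harmonic in $x\in\wt D$ for the same reason, so $C^{D,\wt D}(\cdot,y)$ is harmonic on $\wt D$, and harmonicity in $y$ follows by symmetry. Joint continuity (indeed smoothness) is then automatic.

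The hard step, and the one I would single out as the main obstacle, is positive semi-definiteness: the explicit form \eqref{e:2.12} does not make this manifest. My plan is to transfer PSD from the discrete Gibbs-Markov decomposition. By Lemma~\ref{lem:Gibbs-Markov} applied to $\wt D_N\subseteq D_N$, the binding field $\varphi^{D_N,\wt D_N}$ is a centered Gaussian whose covariance on $\wt D_N$ equals $G^{D_N}-G^{\wt D_N}$, which is manifestly PSD. For any $x_1,\dots,x_n\in\wt D$ and $a_1,\dots,a_n\in\R$, Lemma~\ref{lemma-3.2}---both the diagonal asymptotic (where the $g\log N+c_0$ terms cancel, leaving $g\log r^D(x)-g\log r^{\wt D}(x)=C^{D,\wt D}(x,x)$) and the off-diagonal limit---yields
\[
\sum_{i,j}a_ia_j\bigl[G^{D_N}-G^{\wt D_N}\bigr]\bigl(\lfloor x_iN\rfloor,\lfloor x_jN\rfloor\bigr)\,\longrightarrow\,\sum_{i,j}a_ia_j\,C^{D,\wt D}(x_i,x_j),
\]
so non-negativity is inherited in the limit.

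It remains to construct $\Phi^{D,\wt D}$. Kolmogorov's extension theorem, applied to the symmetric PSD kernel $C^{D,\wt D}$, produces a centered Gaussian family $\{\Phi^{D,\wt D}(x):x\in\wt D\}$ with covariance $C^{D,\wt D}$. Continuous sample paths will follow from Kolmogorov's continuity criterion: smoothness of $C^{D,\wt D}$ (a consequence of harmonicity in each variable) gives $E[(\Phi^{D,\wt D}(x)-\Phi^{D,\wt D}(y))^2]=O(|x-y|^2)$ locally, which is more than enough in the Gaussian setting. For a.s.\ harmonicity, I would verify the mean value property: for each closed disc $\overline B(x_0,r)\subset\wt D$, harmonicity of $C^{D,\wt D}$ in both variables makes
\[
E\Bigl[\Bigl(\Phi^{D,\wt D}(x_0)-\tfrac{1}{2\pi r}\oint_{\partial B(x_0,r)}\Phi^{D,\wt D}\,\rmd\sigma\Bigr)^2\Bigr]=0,
\]
so the mean value identity holds a.s.\ for each fixed pair $(x_0,r)$. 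Restricting to a countable dense collection of such pairs and invoking the a.s.\ continuity of $\Phi^{D,\wt D}$ upgrades the identity to every $(x_0,r)$ simultaneously, giving a.s.\ harmonic paths.
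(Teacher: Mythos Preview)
Your proof is correct. The paper does not give its own argument here; it simply cites \cite[Lemma~2.3]{BL2}, so there is nothing to compare against in detail. Your route---identifying $C^{D,\wt D}=\wh G^D-\wh G^{\wt D}$, reading off symmetry and harmonicity from the Poisson-integral representation, and obtaining positive semi-definiteness by passing to the limit in the discrete covariance $G^{D_N}-G^{\wt D_N}$ via Lemma~\ref{lemma-3.2}---is a clean and self-contained substitute, and the Kolmogorov-extension/continuity construction together with the $L^2$ mean-value computation for a.s.\ harmonicity is the standard way to finish.
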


\begin{proof}
	See~\cite[Lemma 2.3]{BL2}.
\end{proof}

The equicontinuity of the sample paths of~$\Phi^{D,\wt D}$ implied by harmonicity permits a locally-uniform coupling of the discrete and continuum processes:

\begin{lemma}
	\label{lemma-3.5}
	Assume the setting of Lemma~\ref{lemma-3.3} and let $\{D_N\}_{N\ge1}$, resp., $\{\wt D_N\}_{N\ge1}$ be lattice domains approximating~$D$, resp.,~$\wt D$ in the sense \twoeqref{E:2.1a}{E:2.2a}. Suppose $\wt D_N\subseteq D_N$ for all~$N\ge1$. Then for each~$N\ge1$ there is a coupling of $\varphi^{D_N,\wt D_N}$ and $\Phi^{D,\wt D}$ such that, for each $\delta > 0$,
	\begin{equation}
		\label{eq:bind-coupl}
		\sup_{x\in \wt D^\delta}\,\Bigl|\varphi^{D_N,\wt D_N}(\lfloor xN\rfloor)-\Phi^{D,\wt D}(x)\Bigr|\longrightarrow 0
	\end{equation}
	in probability as $N\to\infty$.
\end{lemma}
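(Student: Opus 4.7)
The plan is to identify $\varphi^{D_N,\wt D_N}(\lfloor \cdot N\rfloor)$ as a centered Gaussian field whose covariances converge uniformly on $\overline{\wt D^\delta}\times\overline{\wt D^\delta}$ to $C^{D,\wt D}$, to establish tightness of these fields in $C(\overline{\wt D^\delta})$, and finally to invoke the Skorokhod representation theorem to produce the required coupling.

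First, the Gibbs--Markov property (Lemma~\ref{lem:Gibbs-Markov}) applied to $\wt D_N\subseteq D_N$ gives that $\varphi^{D_N,\wt D_N}$ is a centered Gaussian field with
\[
\Cov\bigl(\varphi^{D_N,\wt D_N}_x,\varphi^{D_N,\wt D_N}_y\bigr)=G^{D_N}(x,y)-G^{\wt D_N}(x,y),\qquad x,y\in\wt D_N,
\]
whose sample paths are discrete harmonic on $\wt D_N$ almost surely. Combining Lemma~\ref{lemma-3.2} with the explicit form~\eqref{E:3.6} of the continuum Green function, the definition~\eqref{e:2.12}, and the identity $g\log r^D(x)-g\log r^{\wt D}(x)=C^{D,\wt D}(x,x)$ (immediate from~\eqref{E:3.3}), one obtains
\[
G^{D_N}(\lfloor xN\rfloor,\lfloor yN\rfloor)-G^{\wt D_N}(\lfloor xN\rfloor,\lfloor yN\rfloor)\longrightarrow C^{D,\wt D}(x,y)
\]
as $N\to\infty$, uniformly on $\overline{\wt D^\delta}\times\overline{\wt D^\delta}$. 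The point is that the singular $-g\log\|x-y\|$ parts of $\wh G^D$ and $\wh G^{\wt D}$ cancel off-diagonal, as do the $g\log N$ diagonal terms from Lemma~\ref{lemma-3.2}; the uniformity of this cancellation (including at scales between lattice and macroscopic) is most transparent via the potential-kernel representation~\eqref{E:3.6b}, where the $\fraka(x-y)$-type singularities do not appear in the difference. In particular, all finite-dimensional distributions of $\varphi^{D_N,\wt D_N}(\lfloor \cdot N\rfloor)$ converge to those of $\Phi^{D,\wt D}$.

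Next, to establish tightness in $C(\overline{\wt D^\delta})$, I would exploit smoothness of the limiting covariance: $C^{D,\wt D}$ is (real) analytic on $\overline{\wt D^\delta}\times\overline{\wt D^\delta}$ because it is harmonic in each variable by Lemma~\ref{lemma-3.3}. Thus for $x,y\in\overline{\wt D^\delta}$ separated by more than the lattice scale,
\[
\E\bigl[(\varphi^{D_N,\wt D_N}_{\lfloor xN\rfloor}-\varphi^{D_N,\wt D_N}_{\lfloor yN\rfloor})^2\bigr]\le C|x-y|^2+o(1),
\]
uniformly in $N$. For neighboring lattice points one instead uses discrete harmonicity of $\varphi^{D_N,\wt D_N}$ on $\wt D_N^{\delta/2}$ and standard discrete gradient estimates to bound nearest-neighbor increments pathwise by $O(1/N)$ times the supremum of $|\varphi^{D_N,\wt D_N}|$ on $\wt D^{\delta/2}_N$ (which is tight by the diagonal covariance bound and Gaussian concentration). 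A chaining argument (e.g.~Dudley's entropy bound applied to the intrinsic pseudometric) then yields a uniform-in-$N$ modulus-of-continuity estimate, hence tightness in $C(\overline{\wt D^\delta})$.

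Combined with finite-dimensional convergence, this gives $\varphi^{D_N,\wt D_N}(\lfloor \cdot N\rfloor)\Rightarrow\Phi^{D,\wt D}$ weakly in $C(\overline{\wt D^\delta})$, and the Skorokhod representation theorem furnishes a coupling (for each $N$) under which~\eqref{eq:bind-coupl} holds almost surely, a fortiori in probability. The main obstacle lies in the tightness step: the individual Green functions $G^{D_N}$ and $G^{\wt D_N}$ both blow up on the diagonal at rate $g\log N$ (Lemma~\ref{lemma-3.2a}), and one needs the cancellation in their difference to be uniform enough to control the modulus of continuity at \emph{all} scales from the lattice spacing up to $\diam D$. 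The a~posteriori discrete-harmonicity of $\varphi^{D_N,\wt D_N}$ is the key input that saves the day by supplying strong smoothness of sample paths at the lattice scale.
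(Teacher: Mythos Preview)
Your approach is correct and matches the standard argument that the paper defers to (the proof in the paper is just a citation to \cite[Appendix]{BL2} and \cite[Lemma~4.4]{B-notes}, both of which proceed exactly as you describe: uniform covariance convergence, tightness in~$C$ via Fernique/Kolmogorov-type estimates, then Skorokhod). One remark on the tightness step: the bound $\E[(\varphi^N_{\lfloor xN\rfloor}-\varphi^N_{\lfloor yN\rfloor})^2]\le C|x-y|^2+o(1)$ with an additive~$o(1)$ is not by itself usable for Kolmogorov's criterion at scales where $|x-y|^2\ll o(1)$, and splitting into ``macroscopic vs.\ lattice'' scales leaves an intermediate regime uncovered; the cleanest route is the one you allude to via the potential-kernel representation~\eqref{E:3.6b}, which gives directly $\E[(\varphi^N_{\lfloor xN\rfloor}-\varphi^N_{\lfloor yN\rfloor})^2]\le C_\delta|x-y|$ uniformly in~$N$ and in $x,y\in\wt D^\delta$ (the~$\fraka$ terms cancel and only boundary integrals against $\fraka(z-\cdot)$ remain, with $|z-x|,|z-y|\ge c\delta N$), and Gaussian hypercontractivity then feeds this into Kolmogorov at all scales simultaneously---no two-regime argument needed.
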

 
\begin{proof}
	See \cite[Appendix]{BL2} or \cite[Lemma~4.4]{B-notes}.
\end{proof}

Another fact we will need is that the ``binding'' field $\varphi^{D_N,A}$ between  lattice domains  $A$ and~$D_N$ can be bounded uniformly for~$A$ with~$D_N^\delta\subseteq A\subseteq D_N$:

\begin{lemma}
	\label{l:cd}
	Let $D\in\mathfrak D$. For each compact $B\subseteq D$ and all $t>0$, we have
	\begin{equation}
		\lim_{\delta \downarrow 0} \limsup_{N \to \infty} \sup_{D^\delta_N\subseteq A\subseteq D_N} \bbP\Bigl(\,\max_{\substack{x\in D_N\\ x/N\in B}}\big|\varphi^{D_N,A}_x\bigr|>t\Bigr) = 0 \,.
	\end{equation}
\end{lemma}

\begin{proof}
 This is a discrete analogue of~\cite[Proposition~3.7]{BL2}. For a formal proof,  use the derivation of~(B.17) in~\cite[Lemma~B.11]{Ballot} with the following  choices: $n:=\log N$, $U:=D$ and~$A:=\wt U_n$.
\end{proof}

\subsection{Ballot estimates}
\label{sec-ballot}\noindent
Our proofs of the  near-extremal process convergence rely heavily on computations involving the tail asymptotic for the DGFF maximum. A starting point is the precise bound (to within  a multiplicative  constant) on the upper tail:

\begin{lemma}
	\label{lemma-DGFF-tail}
 Let $D\in\mathfrak D$. There exists a constant $c<\infty$ such that for all $N\ge1$ and all $u\in[1,\sqrt{\log N})$,
	\begin{equation}
		\label{e:A1}
		\bbP\bigl(\,\max_{x\in D_N} h^{D_N}_x> m_N+u\bigr)\leq c u\rme^{-\alpha u} \,.
	\end{equation}
\end{lemma}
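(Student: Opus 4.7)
The plan is to run a modified first-moment bound in which the one-point Gaussian tail is combined with a ballot-style estimate along a Gibbs-Markov dyadic decomposition. The naive union bound alone is too weak: using Lemmas~\ref{lemma-3.2}--\ref{lemma-3.2a} to replace $G^{D_N}(x,x)$ with $g\log N+O(1)$ at bulk sites, together with Mills-ratio and the value of~$m_N$, one gets
\[
\bbP\bigl(\max_{x\in D_N} h^{D_N}_x > m_N+u\bigr)\le |D_N|\,\max_{x\in D_N}\bbP\bigl(h^{D_N}_x > m_N+u\bigr)\le C(\log N)\,\texte^{-\alpha u},
\]
which exceeds the target by a factor of $u^{-1}\log N$. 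The missing gain must come from restricting the one-point event to an additional \emph{barrier} event that holds deterministically on the event $\{\max h^{D_N}>m_N+u\}$.

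To build the barrier, I would apply the Gibbs-Markov property (Lemma~\ref{lem:Gibbs-Markov}) recursively along the dyadic sequence of balls $B_k(x):=B(x,2^{-k}N)\cap\Z^2$ with $B_0(x):=D_N$ and $k=0,\ldots,K$ for $K:=\lfloor\log_2 N\rfloor - O(1)$, writing
\[
h^{D_N}_x = \sum_{k=1}^{K}\psi_k(x)+h^{B_K(x)}_x,\qquad \psi_k(x):=\varphi^{B_{k-1}(x),B_k(x)}(x).
\]
By Lemma~\ref{lemma-3.2} the increments $\{\psi_k(x)\}$ are independent centered Gaussians with variance $g\log 2 + o(1)$, so the partial sums $S_j(x):=\sum_{k\le j}\psi_k(x)$ behave like a Gaussian random walk with total variance $g\log N + O(1)$. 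The key observation is that if $h^{D_N}_y>m_N+u$ for some $y\in D_N$, then a crude upper-tail bound applied to the DGFF $h^{B_k(y)}$ (namely, that it does not exceed its own centering $m_{N/2^k}$ by more than $C\log(k\wedge(K-k))$) forces $S_j(y)$ to lie below the linear-with-log-correction barrier $j\mapsto (j/K)(m_N+u) + C\log(j\wedge(K-j))$ for every $j\le K$. Thus, on this event,
\[
\bbP\bigl(\max_{D_N} h^{D_N}>m_N+u\bigr)\le \sum_{x\in D_N}\bbP\bigl(h^{D_N}_x>m_N+u,\ S_j(x)\le\text{barrier}\ \forall j\le K\bigr)+\text{error}.
\]

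I would then invoke a classical Gaussian ballot estimate (cf.~\cite[Sec.~10]{B-notes}) to show that, conditionally on $S_K(x)+h^{B_K(x)}_x=m_N+u+t$ with $t\ge 0$, the conditional probability of staying below the barrier is $O\!\bigl(u(1+t)/\log N\bigr)$, uniformly in $x$ at macroscopic distance from $\partial D_N$. This inserts an extra factor $u/\log N$ into the one-point tail, cancelling the spurious $\log N$ above and giving
\[
\bbP\bigl(h^{D_N}_x > m_N+u,\ \text{barrier}\bigr)\le C u\,N^{-2}\,\texte^{-\alpha u}
\]
after integrating the residue $t$ against $\texte^{-\alpha t}$. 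Summing over the $O(N^2)$ bulk sites yields $cu\texte^{-\alpha u}$, while near-boundary sites contribute negligibly since Lemma~\ref{lemma-3.2a} makes their variances strictly smaller and their Gaussian tails rapidly decaying. The main obstacle is the ballot estimate itself: the increments $\psi_k$ are not exactly identically distributed, their variances fluctuate at scales $k$ close to~$0$ or~$K$, and the barrier position mildly depends on $\dist(x,\partial D_N)$, so one must either Gaussian-compare (Kahane/Slepian) to a branching random walk for which ballot bounds are standard, or run the ballot argument directly in the non-stationary Gaussian setting with careful endpoint handling. The restriction $u<\sqrt{\log N}$ is precisely what keeps the Gaussian density at $m_N+u$ in the regime where these approximations remain controlled.
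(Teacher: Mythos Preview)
The paper does not prove this lemma at all: it simply cites \cite[Theorem~1.4]{DZ} for the square $D_N=(0,N)^2\cap\Z^2$ and then \cite[Proposition~3.3]{BL2} for the routine extension to general domains $D\in\mathfrak D$. Your proposal is therefore not the paper's argument but rather a sketch of the \emph{underlying} proof that those references carry out. The outline you give --- a modified first-moment bound in which the naive union bound is sharpened by inserting a barrier event for the partial sums along a dyadic Gibbs--Markov decomposition, and then invoking a ballot-type estimate to gain the missing factor $u/\log N$ --- is indeed the standard mechanism behind such upper-tail bounds, and it is essentially what \cite{DZ} does (with a comparison to a modified branching random walk) and what the concentric decomposition in \cite{B-notes} reproduces. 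Two points in your sketch deserve care: first, the barrier is not forced ``deterministically'' by $\{\max h^{D_N}>m_N+u\}$ but only after intersecting with a high-probability good event controlling the intermediate-scale maxima, and the complement of that good event must itself be bounded by $cu\texte^{-\alpha u}$ (this is where the restriction $u<\sqrt{\log N}$ is actually used, not merely in the Gaussian density approximation); second, your treatment of near-boundary sites by ``strictly smaller variance'' is too quick, since the full decomposition into balls $B_k(x)$ breaks down when $x$ is within $O(1)$ of $\partial D_N$ --- this is precisely why the extension from squares to general $D\in\mathfrak D$ is handled separately in \cite[Proposition~3.3]{BL2} via a covering by translates of a fixed square and a Gibbs--Markov comparison.
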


\begin{proof}
	For~$D_N:=(0,N)^2\cap\Z^2$, this is~\cite[Theorem~1.4]{DZ}. A routine extension to general domains is performed in \cite[Proposition~3.3]{BL2}.
\end{proof}

In addition, we will need to control the tail probability even under conditioning on the field at a vertex to be fairly large. We begin with an asymptotic statement:

\begin{lemma}
	\label{prop:asymptotic}
	 Let $D\in\mathfrak D$.
	There exists a function $\cR\colon[0,\infty)\to(0,\infty)$ and, for each $x\in D$, a function $\cL^x \colon[0,\infty) \to(0,\infty)$ such that the following holds:
	\begin{enumerate}
		\item[(i)] For all $\delta > 0$,  uniformly in $x \in D^\delta$, 
		\begin{equation}
			\label{e:lem:rw-as:fg}
			\lim_{s\to\infty} \frac{\cL^x(s)}s = \lim_{s\to\infty}\frac{\cR(s)}s = 1.
		\end{equation}
		\item[(ii)] For all $x \in D$, $\delta>0$  and all  $u \geq 1$, the quantity~$o(1)$ defined by
		\begin{equation}
			\label{E:3.11}
			\bbP\bigl(h^{D_N}\leq m_N+u\,\big|\, h^{D_N}_{\lfloor Nx \rfloor}=m_N-s\bigr)
			=\bigl(2+o(1)\bigr)\,\dfrac{\cL^{x}(u)\cR(u+s)}{g\log N}
		\end{equation}
		tends to zero as $N\to\infty$ uniformly in~$s\in[0,(\log N)^{1-\delta}]$.
	\end{enumerate}
\end{lemma}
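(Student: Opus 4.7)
The plan is to derive \eqref{E:3.11} by coupling the conditional DGFF with a random walk via the Gibbs--Markov and concentric decompositions, and then invoking a two-endpoint ballot estimate.

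First, I would apply the Gibbs--Markov property (Lemma~\ref{lem:Gibbs-Markov}) with $U := D_N \setminus \{\lfloor Nx \rfloor\}$ to decompose the field conditioned on $h^{D_N}_{\lfloor Nx\rfloor} = m_N - s$. The conditional law equals $\tilde h^U + \mu$, where $\tilde h^U$ is an independent DGFF on $U$ and $\mu$ is the discrete-harmonic extension of the prescribed values, given by $\mu(y) = (m_N - s)\, G^{D_N}(y,\lfloor Nx\rfloor)/G^{D_N}(\lfloor Nx\rfloor,\lfloor Nx\rfloor)$. Lemma~\ref{lemma-3.2} identifies the denominator as $g \log N + c_0 + g\log r^D(x) + o(1)$, and $\mu$ decays like a linear function of $\log \dist(y,\lfloor Nx\rfloor)$ away from $\lfloor Nx\rfloor$. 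The event $\{h^{D_N} \leq m_N + u\}$ thereby becomes the upper-barrier event $\{\tilde h^U_y \leq m_N + u - \mu(y)\ \forall y \in U\}$.

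Next, I would perform the standard concentric decomposition of $\tilde h^U$ around $\lfloor Nx\rfloor$ across dyadic scales, as developed in \cite{BL2,BL3}. The harmonic averages on the boundaries of the boxes $B(\lfloor Nx\rfloor, 2^{-k}N)$ form, up to an independent centered remainder per scale, an (essentially) random walk $(S_k)_{k=0}^{\log_2 N}$ with Gaussian increments of variance $g \log 2$. After subtracting the Bramson tilt $2\sqrt{g}\log 2$ per step and absorbing the mean shift coming from $\mu$, the event in the previous paragraph reduces, modulo tight and tractable fluctuations inside each annulus, to a ballot-type event: the walk $(S_k)$, starting near $u$ (at the boundary of $D_N$) and ending near $u + s$ (at $\lfloor Nx\rfloor$), must stay above $0$ throughout. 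This reduction is the same bridge from DGFF maximum tails to random-walk survival probabilities that underlies Lemma~\ref{lemma-DGFF-tail}.

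Applying the two-endpoint ballot estimate then produces the asymptotic form $2 \cL^x(u)\cR(u+s)/n$ with $n := \log_2 N$, where $\cL^x$ collects the local survival contribution near $\partial D_N$ (hence carries the $x$-dependence via $r^D(x)$ and the harmonic measure $\Pi^D(x,\cdot)$) and $\cR$ collects the one near $\lfloor Nx\rfloor$; both are classical renewal functions that grow linearly at infinity, yielding \eqref{e:lem:rw-as:fg}. Converting $n$ to $\log N$ via $n \log 2 = \log N$ and combining with the step variance gives the prefactor $1/(g \log N)$, while the constant $2$ is the standard reflection coefficient in two-sided ballot estimates for Gaussian walks. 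The main obstacle is establishing this asymptotic \emph{uniformly} over $s \in [0,(\log N)^{1-\delta}]$ and $x \in D^\delta$: for $s$ close to $(\log N)^{1-\delta}$ the starting point of the associated random walk is far inside its admissible region, so the ballot asymptotic must be extended beyond the regime of bounded endpoint distances; simultaneously, the $x$-dependence of $\cL^x$ has to be tracked cleanly through the coupling of Lemma~\ref{lemma-3.5} and the precise Green function expansion of Lemma~\ref{lemma-3.2}, so that the error terms hidden in the concentric decomposition do not destroy the product factorization.
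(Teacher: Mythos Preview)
Your outline is the right underlying strategy, but you should be aware that the paper does not carry out this argument here: its proof of Lemma~\ref{prop:asymptotic} consists entirely of a reduction (translating~$D$ so that the conditioning point sits at the origin) followed by a citation to Theorem~1.1, Propositions~1.3, 1.4 and~1.6 of the companion paper~\cite{Ballot}. The functions~$\cL^x$ and~$\cR$ are defined there, and the linear growth \eqref{e:lem:rw-as:fg} is quoted from~\cite[Proposition~1.4]{Ballot}. So what you have sketched is, in effect, the content of~\cite{Ballot} rather than the present paper's proof.

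Your plan is correct in spirit --- Gibbs--Markov to pin the field at~$\lfloor Nx\rfloor$, concentric decomposition to extract a random-walk bridge, then a two-endpoint ballot asymptotic --- and you have correctly flagged the main difficulty, namely the uniformity in~$s\in[0,(\log N)^{1-\delta}]$. The paper itself stresses (in the remark following the proof) that this uniformity is exactly what distinguishes Lemma~\ref{prop:asymptotic} from earlier results such as \cite[Proposition~5.2]{BL3} or~\cite[Proposition~12.5]{B-notes}, where~$s$ is confined to a bounded interval; extending the ballot asymptotic to endpoints at distance up to~$(\log N)^{1-\delta}$ is a genuine technical advance that required the separate development in~\cite{Ballot}. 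If you intend to write a self-contained argument you will need to supply that extension in full, including control of the error terms in the concentric decomposition uniformly over this growing range of~$s$; a sketch at the level you have given does not constitute a proof.
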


\begin{proof}
 For the proof, and also later use,  denote 
	\begin{equation}
		B(x,r):=\bigl\{y\in\R^d\colon \textd_\infty(x,y)<r\bigr\}.
	\end{equation}
	 The proof draws heavily on the conclusions of the technical paper~\cite{Ballot} by two of the present authors so we give only the main steps. Given $N\in\bbN$ and  $x \in D$,  denote  $D^{(N)}:=-\lfloor Nx\rfloor /N + D$.  Then $D^{(N)}\in\mathfrak D$ and  $D^{(N)}_N=-\lfloor Nx\rfloor+D_N$ and so the left-hand side of~\eqref{E:3.11} equals
	\begin{equation}
		\label{E:3.20ui}
		\bbP\Bigl(h^{D^{(N)}_N}-m_N-u\leq 0\,\Big|\, h^{D^{(N)}_N}_{0}-m_N-u=-s-u\Bigr).
	\end{equation}
	By~\cite[Theorem~1.1]{Ballot},  in which $\cL_{n,0, U}$ is a function on the outer boundary of a discretization~$U_n$ of the continuum domain~$U$ and $\cR_{0,0,V}$ is a function on a discretized inner boundary of the continuum domain~$V$, the probability in~\eqref{E:3.20ui}~equals 
	\begin{equation}
		\bigl(2+o(1)\bigr)\frac{\cL_{\log N,0, D^{(N)}}(-u \Ind_{\partial D^{(N)}_N})\cR_{0,0,B(0,1/4)}((-u-s) \Ind_{\{0\}} )}{g\log N},
	\end{equation}
 where~$o(1)\to0$ 	in the limit as $N \to \infty$, uniformly  in~$s\in[0,(\log N)^{1-\delta}]$  for  any  fixed $u \geq 1$. Here  the functions on the right-hand side are written in  the notation of~\cite{Ballot}  with the discretization of the continuum domain~$D^{(N)}$  for $\cL_{\log N,0, D^{(N)}}$ chosen as in the present paper (see~\cite[Remark~2]{Ballot}), while that for $\cR_{0,0,B(0,1/4)}$ as in~\cite[Eq.~(1.3)]{Ballot}.  (The inner boundary in the latter function is then just the origin; hence the appearance of~$1_{\{0\}}$ in the argument.) 
	
	By the continuity of the $\cL$-functional in the underlying domain  (see \cite[Proposition~1.6]{Ballot}), 
	$\cL_{\log N,0, D^{(N)}}(-u\Ind_{\partial D^{(N)}_N})$  has the same $N\to\infty$ asymptotic as its continuum analogue   $\cL_{\log N,0,-x+D}( -u \Ind_{\partial (-x+D)_N})$  which by~\cite[Proposition~1.3]{Ballot} converges, as~$N\to\infty$,  to some $ \cL^x(u)$.
	Setting  $\cR(s):=\cR_{0,0,B(0,1/4)}(-s \Ind_{\{0\}})$, we  arrive at~\eqref{E:3.11}.  Proposition~1.4 of~\cite{Ballot}  then  yields~\eqref{e:lem:rw-as:fg}.
\end{proof}

We remark that asymptotic expressions of the form~\eqref{E:3.11} lie at the heart of the work~\cite{BL3} where they were derived using the so called concentric decomposition. In~\cite{B-notes}, the concentric decomposition was used to give alternative proofs of tightness and convergence in law of  the centered DGFF maximum  that avoid the use of the modified branching random walk which  the earlier proofs by Bramson and Zeitouni~\cite{BZ} and Bramson, Ding and Zeitouni~\cite{BDingZ} relied on. (A key additional input on top of~\cite{BL3} was the first limit in \eqref{e:lem:rw-as:fg}; see~\cite[Proposition~12.5]{B-notes}.) Notwithstanding, Lemma~\ref{prop:asymptotic} is stronger than any of these prior conclusions; e.g., compared with \cite[Proposition~5.2]{BL3}. Indeed, there $-s=u$  and~$s$ has to be confined to  a finite interval. 

In~\cite{Ballot} the concentric decomposition  has been developed further  to yield uniform asymptotics as well as bounds for  various  ``ballot'' probabilities in more general setups.  This includes the  case where the field is conditioned on a subset of its domain which is not necessarily a single point,  as needed in the next lemma.  In the sequel,  we use the usual notation $a^+:=\max\{a,0\}$ and $a^-:=\max\{-a,0\}$.  For the purpose of the next lemma, we also extend the meaning of $h^{U}$ to DGFF with non-trivial boundary values. Namely, under $\bbP(\,\cdot\,\big|\,h^{U}|_{\partial U } = u)$, the field~$h^U$ is a multivariate normal indexed by vertices in~$U$ with covariance~$G^U$ and mean given by the unique harmonic extension of~$u$ onto~$U$.

\begin{lemma}
	\label{prop:UB}
	There is~$\delta_0\in(0,1)$ and, for each $\delta\in(0,\delta_0)$, there is $c=c(\delta)\in(0,\infty)$ such that the following holds for all $D, \wt{D} \in \mathfrak D$ with~$D$ connected, $B(0,\delta)  \subseteq  D\cap \wt D$,  $D\cup\wt D\subseteq B(0,1/\delta)$ and diameter of each connected component of $\partial D$ bounded from below by $\delta$: For all $1 \le K \leq  c^{-1} N$, all $u \in \bbR^{\partial D_N}$ and all $v \in \bbR^{\partial( \wt{D}^\cc)_K}$, abbreviating $U:=D_N \cap (\wt{D}^\cc)_K$,
	\begin{equation}
	\begin{aligned}
	\label{E:3.16iu}
		\bbP\Bigl(h^{U}|_{D^\delta_N \cap  (\wt{D}^c)^\delta_K}  \leq 0 \,\Big|\,h^U|_{\partial D_N} = -m_N+u \,,\,\,
		&h^U|_{\partial (\wt{D}^\cc)_K} = -m_K+v \Bigr) 
		\\
		&\leq c\, \frac{(1+u_\star^-)(\rme^{-(v_\star^+)^{3/2}\Ind_{\{u^-_\star \leq \sqrt{\log N/K}\}}} +v_\star^-)}{\log(N/K)}\,,
		\qquad
	\end{aligned}
	\end{equation}
	
	where
	\begin{equation}
		u_\star := \ol{u}_0 - \max_{x,y\in D_N^\delta} \big| \ol{u}_x - \ol{u}_y \big| \quad\text{and}\quad
		v_\star :=  \ol{v}_0-3\max_{x,y\in (\wt{D}^\cc)_K^\delta\cup\{0\}} \big|\ol{v}_x-\ol{v}_y\big|
	\end{equation}
	for $\ol{u}$, resp., $\ol{v}$ denoting the (unique) bounded harmonic extensions of $u$, resp., $v$ onto $D_N$, resp., $(\wt{D}^\cc)_K$.
	In addition, for all $u \geq 0$, $v \in \bbR$, $x \in D_N^\delta$, $\delta' \in[0,\delta^{-1}]$ and $N \geq 1$,
	\begin{equation}
		\label{E:3.17iu}
		\bbP\Bigl(h^{D_N}\leq m_N+u \text{\rm\ in }D^{\delta'}_N  \,\Big|\, h^{D_N}_x=m_N+v\Bigr)
		\leq c \frac{(1+u)(1+(u-v)^+)}{\log N}\,.
	\end{equation}
\end{lemma}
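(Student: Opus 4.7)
The plan is to deduce both \eqref{E:3.16iu} and \eqref{E:3.17iu} from the uniform ballot-type estimates for the DGFF developed via the concentric decomposition in the companion work \cite{Ballot}. In each case I would first use the Gibbs-Markov property (Lemma~\ref{lem:Gibbs-Markov}) to reduce the conditional probability to an unconditional upper-barrier probability for a centered DGFF, with the barrier shifted by the discrete harmonic extension of the conditioning data, and then invoke the ballot bounds of \cite{Ballot} after matching the problem to a random-walk crossing across the relevant dyadic scales.

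For \eqref{E:3.17iu}, apply Gibbs-Markov to the singleton $\{x\}\subset D_N$: conditional on $h^{D_N}_x=m_N+v$, one has
\[
h^{D_N}\laweq \tilde h+(m_N+v)\,H^{D_N\smallsetminus\{x\}}(\cdot,x)
\]
with $\tilde h\laweq h^{D_N\smallsetminus\{x\}}$ independent of the conditioning. After subtracting $m_N+u$, the event $\{h^{D_N}\le m_N+u\}$ becomes an upper-barrier event for $\tilde h$ whose barrier---via the potential-kernel asymptotic \eqref{E:3.7} together with Lemma~\ref{lemma-3.2a}---tilts logarithmically from $u$ near $\partial D_N$ to $v$ near $x$. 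This is the canonical one-point ballot setup of \cite{Ballot}, and its upper bound yields the product $(1+u)(1+(u-v)^+)/\log N$, with the two factors corresponding to outer and inner slack respectively.

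For \eqref{E:3.16iu}, Gibbs-Markov applied across the boundary $\partial D_N\cup\partial(\wt D^\cc)_K$ shows that the conditional law of $h^U$ inside $U=D_N\cap(\wt D^\cc)_K$ is that of a centered DGFF $\tilde h\laweq h^U$ plus the discrete harmonic extension $\phi$ of the prescribed boundary data, so the conditional probability in question equals $\bbP(\tilde h\le -\phi\text{ on }D_N^\delta\cap(\wt D^\cc)^\delta_K)$. Inside $U$, $\phi$ interpolates logarithmically between its boundary values $-m_N+u$ on $\partial D_N$ and $-m_K+v$ on $\partial(\wt D^\cc)_K$; the quantities $u_\star$ and $v_\star$ extract the worst-case slacks at the two scales after this interpolation, with the $-3\max|\ol v_x-\ol v_y|$ correction absorbing the unavoidable oscillation of $\ol v$. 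The problem then reduces to a two-sided barrier event across the $\approx\log(N/K)$ scales from $\partial(\wt D^\cc)_K$ out to $\partial D_N$, to which the two-sided annular ballot bound of \cite{Ballot} applies, giving the overall rate $1/\log(N/K)$, the outer factor $(1+u_\star^-)$, and the bracketed inner factor.

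The hard part will be the stretched-exponential inner term $\rme^{-(v_\star^+)^{3/2}}$ together with its regime indicator $1_{\{u_\star^-\le\sqrt{\log(N/K)}\}}$: this combination is not the direct output of an off-the-shelf ballot bound but reflects the entropic-repulsion cost, familiar from Brownian-meander estimates, of the associated random walk being pinned $v_\star^+$ below the barrier at the inner endpoint while staying below it at all intermediate scales; when $u_\star^->\sqrt{\log(N/K)}$ the outer slack already absorbs this cost, and only the additive $v_\star^-$ term survives. Extracting this regime-dependent form uniformly in $N,K,D,\wt D,u,v$ under the geometric hypotheses on $D,\wt D$ requires running the refined concentric-decomposition machinery of \cite{Ballot} while tracking both the minimum and the endpoint excursion of the walk---essentially a two-sided counterpart to the one-point asymptotic behind Lemma~\ref{prop:asymptotic}.
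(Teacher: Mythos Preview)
Your proposal is correct in spirit and ultimately rests on the same source as the paper---the ballot estimates of~\cite{Ballot}---but you are doing considerably more work than the paper does. The paper's proof is a one-line citation: both \eqref{E:3.16iu} and \eqref{E:3.17iu} are read off directly from Theorem~1.8 of~\cite{Ballot} (together with Remark~2 there, which addresses the discretization convention), with the only adjustment being that Theorem~1.8 uses~$\ol v_\infty$ rather than~$\ol v_0$, which is why the oscillation correction in the definition of~$v_\star$ carries the coefficient~$3$.

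In other words, the Gibbs--Markov reductions you describe, the logarithmic interpolation of the harmonic extension, and the stretched-exponential ``hard part'' with its regime indicator are all already packaged inside Theorem~1.8 of~\cite{Ballot}; that theorem is stated precisely in the annular two-boundary form needed here, so there is nothing to re-derive. Your sketch is an accurate outline of what the proof of Theorem~1.8 in~\cite{Ballot} looks like from the inside, but for the purposes of the present paper the lemma is a direct import.
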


\begin{proof}
	This follows immediately from~\cite[Theorem~1.8]{Ballot} and \cite[Remark~2]{Ballot}. The constant~$3$ in the definition of $v_\star$ comes from the fact that in~\cite[Theorem~1.8]{Ballot}, $\ol{v}_\infty$ is used in place of $\ol{v}_0$.
\end{proof}

\section{Characterizations of cLQG measure}
\label{sec-4a}\noindent
In order to identify the measure arising in the limit \eqref{E:2.10} with the cLQG measure constructed by the limit of the measures in \eqref{E:2.7}, we will need conditions that characterize the cLQG measure uniquely. For subcritical LQG measures, such conditions have been found by Shamov~\cite{Shamov}, but the argument does not extend to the critical case. We will thus rely on \cite[Theorem~2.8]{BL2} in the following specific form:

\begin{theorem}
	\label{BL2-thm2.8}
	Suppose~$\{M^D\colon D\in\mathfrak D\}$ is a family of random Borel measures (not necessarily realized on the same probability space) that obey:
	\settowidth{\leftmargini}{(1111)}
	\begin{enumerate}
		\item[(1)] $M^D$ is concentrated on~$D$ and $M^D(D)<\infty$ a.s.
		\item[(2)] $P(M^D(A)>0)=0$ for any Borel $A\subseteq D$ with $\leb(A)=0$.
		\item[(3)] For any $a\in\R^2$, 
		\begin{equation}
			\label{E:2.17ie}
			M^{a+ D}(a+\textd x)\laweq  M^D(\textd x).
		\end{equation}
		\item[(4)] If $D,\widetilde D\in\mathfrak D$ are disjoint then
		\begin{equation}
			\label{E:2.18ua}
			M^{D\cup\wt D}(\textd x)\laweq M^D(\textd x)+M^{\wt D}(\textd x) \,,
		\end{equation}
		with $M^D$ and $M^{\wt D}$ on the right regarded as independent.
		If $D,\wt D\in\mathfrak D$ instead
		obey $\widetilde D\subseteq D$ and $\leb(D\smallsetminus\widetilde D)=0$, then (for $\alpha:=2/\sqrt g$)
		\begin{equation}
			\label{E:2.19ua}
			M^D(\textd x) \laweq \texte^{\alpha  \Phi^{ D, \wt{ D}}(x)} \,M^{\wt{ D}}(\textd x)\,,
		\end{equation}
		where $\Phi^{ D, \wt{ D}}$ is a centered Gaussian field with covariance $C^{D,\wt D}$, independent of $M^{\wt{ D}}$.
		\item[(5)] There is~$c\in(0,\infty)$ such that for  every $\delta\in(0,1)$ and all equilateral triangles $T,T_\delta\in\mathfrak{D}$ centered in $0$ with side-length $1$ and $1-\delta$, respectively, and the same orientation, we have
		\begin{equation}
			\label{E:1.25a}
			\lim_{\lambda\downarrow0}\,\,\sup_{K\ge1}\,\,\biggl|\,\frac{K^4\,E(M^{K^{-1}T}(K^{-1}T_\delta)\texte^{-\lambda K^4 M^{K^{-1}T}(K^{-1}T)})}{\log(\ffrac1\lambda)}- c\int_{T_\delta} r^{\,T}(x)^2\,\textd x\biggr|\,=\,0,
		\end{equation}
		where $r^T(x)$ is as in \eqref{E:3.3}.
	\end{enumerate}
	Then
	\begin{equation}
		\label{E:3.25i}
		M^D\laweq cZ^D,\quad D\in\mathfrak D,
	\end{equation}
	where $Z^D$ denotes $cLQG$ measure on $D$.
\end{theorem}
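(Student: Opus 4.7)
The statement is presented as a reformulation of \cite[Theorem~2.8]{BL2}, so the proof plan is to verify that its hypotheses map onto those of the BL2 result and then quote that theorem, while the BL2 theorem itself is proved by a two-step uniqueness argument that I would reproduce as follows.

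First, I would check that $\{cZ^D\}_{D\in\mathfrak D}$ actually satisfies all five properties: (1)--(2) follow from the construction of~$Z^D$ via \eqref{E:2.7} and the known non-degeneracy of the critical LQG; (3) is translation invariance of the CGFF; (4a) follows because the CGFFs in disjoint domains are independent, while (4b) is exactly the Gibbs--Markov decomposition of the CGFF $h^D = h^{\wt D} + \Phi^{D,\wt D}$ combined with a change of variable in the regularization~\eqref{E:2.7}; and (5) is the normalization~\eqref{E:1.4} specialized to equilateral triangles, which reduces by scaling to checking a single Laplace asymptotic.

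Second, for uniqueness, I would fix $D\in\mathfrak D$ and tile it up to a null set by small translates $T_1,\ldots,T_m$ of a small equilateral triangle $T$ of side~$\epsilon$. Iterated application of (4a) gives $M^D|_{\cup_i T_i}\laweq \sum_i M^{T_i}$ with the $M^{T_i}$ independent, and (4b) (with $\wt D=\cup_i T_i$) inserts a Gaussian multiplier $\rme^{\alpha\Phi^{D,\cup_i T_i}}$ independent of the $M^{T_i}$. The field $\Phi^{D,\cup_i T_i}$ is precisely the piece of the CGFF in~$D$ that one would integrate out in a Gibbs--Markov decomposition down to scale~$\epsilon$, so as $\epsilon\downarrow 0$ along a nested sequence, Lemma~\ref{lemma-3.3} and \cite[Appendix]{BL2} provide a coupling under which $\Phi^{D,\cup_i T_i}$ converges to a single realization of the CGFF $h^D$. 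Property~(5) then identifies the $\epsilon\downarrow0$ limit of $\sum_i \rme^{\alpha\Phi^{D,\cup_i T_i}(x_i)} M^{T_i}$ with $c \, Z^D$ via the defining normalization \eqref{E:1.4}, by matching the Laplace transform of $M^{T_i}(T_i)$ with $\int_{T_i} r^{T_i}(x)^2\,\textd x$ up to the constant~$c$.

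The main obstacle is the critical regime: unlike the subcritical case, the Laplace-transform normalization in (5) involves the subtle ratio $E(M\rme^{-\lambda M})/\log(1/\lambda)$ rather than $E(M)$, so the usual first-moment tilting argument (Shamov's characterization) fails, and one must propagate this indirect normalization through the iterated Gaussian multiplicative relation~\eqref{E:2.19ua} without destroying it. Controlling this propagation is precisely what the $\sqrt{t}$ prefactor in \eqref{E:2.7} is designed to compensate for, and the technical heart of the proof is to verify that the $\epsilon\downarrow0$ Gibbs--Markov limit of $\sum_i \rme^{\alpha\Phi^{D,\cup_i T_i}(x_i)} M^{T_i}$ agrees in law with the $t\to\infty$ limit of $cZ_t^D$. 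Once this matching is in place, \eqref{E:3.25i} follows, and the specialization to the form stated in Theorem~\ref{BL2-thm2.8} is immediate.
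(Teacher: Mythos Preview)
Your opening sentence correctly identifies that this is a reformulation of \cite[Theorem~2.8]{BL2}, and you say the plan is to ``verify that its hypotheses map onto those of the BL2 result and then quote that theorem.'' But you never actually carry out that mapping, and it is \emph{not} a direct match: the original \cite[Theorem~2.8]{BL2} requires in its version of (3) full shift \emph{and dilation} covariance, $M^{a+\lambda D}(a+\lambda\,\textd x)\laweq \lambda^4 M^D(\textd x)$, whereas the present (3) only asks for shift invariance. Correspondingly, the original (5) is the single-scale statement \eqref{E:1.25} for general $D$, while here (5) is restricted to triangles but strengthened to be uniform in the dilation parameter~$K$. The entire content of the paper's proof is to observe precisely this tradeoff and to invoke \cite[Remark~6.9]{BL2}, which explains why the uniform-in-$K$ bound \eqref{E:1.25a} can substitute for dilation covariance (roughly, the uniformity lets you rescale triangles by hand without assuming the measure scales correctly). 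You do not mention this issue at all, so your reduction step has a genuine gap.

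The long second part of your proposal --- tiling by small triangles, iterating Gibbs--Markov, and matching the $\epsilon\downarrow0$ limit to $cZ_t^D$ --- is a reasonable heuristic for how \cite[Theorem~2.8]{BL2} itself is proved, but that is not what is being asked here: the paper treats that theorem as a black box and the proof of the present statement is a three-line reduction. Your sketch also glosses over exactly the delicate point (how the Laplace-tail normalization survives the iterated decomposition) that \cite{BL2} spends its Section~6 on, so it would not stand alone as an independent proof either.
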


\begin{proof}
	 This is the statement of \cite[Theorem~2.8]{BL2} with the assumptions modified as detailed in~\cite[Remark~6.9]{BL2}.
\end{proof}

As it turns out, not only does Theorem~\ref{BL2-thm2.8}  determine   the law of cLQG measures uniquely but it also offers simple criteria that identify two such families of measures in path-wise sense. This is the content of:

\begin{theorem}
	\label{thm-3.10}
	Let $\{M^D\colon D\in\mathfrak D\}$, resp., $\{\wh M^D\colon D\in\mathfrak D\}$ be two families of random Borel measures satisfying conditions (1-3) and \eqref{E:2.18ua} in Theorem~\ref{BL2-thm2.8} and such that~\eqref{E:1.25a} holds with a constant~$c>0$ for $\{M^D\colon D\in\mathfrak D\}$ and a constant $\hat c>0$ for $\{\wh M^D\colon D\in\mathfrak D\}$. Assume that $M^D$ and~$\wh M^D$ are defined, for each~$D\in\mathfrak D$, on the same probability space so that \eqref{E:2.19ua} holds jointly for both measures, i.e., for all $D,\wt D\in\mathfrak D$ with $\wt D\subseteq D$ and~$\leb(D\smallsetminus\wt D)=0$, we have
	\begin{equation}
		\label{E:3.25u}
		\bigl(M^D(\textd x),\wh M^D(\textd x)\bigr) \laweq \Bigl(\texte^{\alpha  \Phi^{ D, \wt{ D}}(x)} \,M^{\wt{ D}}(\textd x),\texte^{\alpha  \Phi^{ D, \wt{ D}}(x)} \,\wh M^{\wt{ D}}(\textd x)\Bigr)\,,
	\end{equation}
	where $\Phi^{D,\wt D}$ is independent of~$(M^{\wt D},\wh M^{\wt D})$ on the right-hand side. Then
	\begin{equation}
		\label{E:3.26}
		c^{-1} M^D = \hat c^{-1}\wh M^D\quad \text{\rm a.s.}
	\end{equation}
	for each~$D\in\mathfrak D$.
\end{theorem}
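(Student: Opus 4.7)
Set $\mu^D:=c^{-1}M^D$ and $\nu^D:=\hat c^{-1}\wh M^D$ for each $D\in\mathfrak D$. Under this normalization, both families $\{\mu^D\}$ and $\{\nu^D\}$ satisfy hypotheses~(1-3), (4), and~\eqref{E:1.25a} of Theorem~\ref{BL2-thm2.8} with normalization constant~$1$, so that theorem yields $\mu^D\laweq Z^D\laweq\nu^D$ as marginal laws, with $Z^D$ the (normalized) cLQG measure on~$D$. What remains is to upgrade this marginal equality in law to the pathwise identity $\mu^D=\nu^D$ a.s., and the structural input available for this is the joint Gibbs-Markov relation~\eqref{E:3.25u}, whose crux is that the \emph{same} Gaussian binding field $\Phi^{D,\wt D}$ multiplies both coordinates upon restriction to $\wt D\subseteq D$ with $\leb(D\setminus\wt D)=0$.

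My plan is to iterate~\eqref{E:3.25u}. Fix a nested sequence $D=\wt D_0\supseteq\wt D_1\supseteq\cdots$ in $\mathfrak D$ with $\leb(\wt D_{k-1}\setminus\wt D_k)=0$, chosen so that the excised set $D\setminus\bigcap_n\wt D_n$ becomes dense in~$D$ (equivalently, so that $r^{\wt D_n}(x)\to 0$ for every $x\in D$). Successive application of~\eqref{E:3.25u} then gives, for every $n\ge1$, the joint identity in distribution
\begin{equation}
(\mu^D,\nu^D)\laweq\bigl(\texte^{\alpha\Psi_n(x)}\mu^{\wt D_n}(\textd x),\,\texte^{\alpha\Psi_n(x)}\nu^{\wt D_n}(\textd x)\bigr),
\end{equation}
where $\Psi_n:=\sum_{k=1}^n\Phi^{\wt D_{k-1},\wt D_k}$ is a sum of independent centered Gaussian fields, independent of the pair on the right. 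Telescoping the covariance identity~\eqref{e:2.12} yields $\Cov(\Psi_n(x),\Psi_n(y))=C^{D,\wt D_n}(x,y)\to\wh G^D(x,y)$, so $\Psi_n$ converges in law to a CGFF $h^D$ on~$D$. Once the joint limit of the right-hand side is identified and shown to have its two coordinates equal a.s., the conclusion $\mu^D=\nu^D$ a.s.\ is immediate.

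The main obstacle is precisely this joint identification: I must show that both $\texte^{\alpha\Psi_n}\mu^{\wt D_n}$ and $\texte^{\alpha\Psi_n}\nu^{\wt D_n}$, jointly with $\Psi_n$, converge to the cLQG of the \emph{same} limit CGFF $h^D$. This is a critical-regime Gaussian multiplicative chaos convergence statement, which I plan to extract by recognizing the iterated product as a regularization of the form~\eqref{E:2.7} and invoking (i) the existence and convergence theorem for the critical LQG (Duplantier-Rhodes-Sheffield-Vargas~\cite{DRSV1,DRSV2}) together with (ii) the pathwise uniqueness of the cLQG as a measurable function of the underlying CGFF (Junnila-Saksman~\cite{JS}, Powell~\cite{Powell}). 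Since both coordinates are built from the same Gaussian field $\Psi_n$ and converge via this mechanism to the cLQG of the common limit $h^D$, the joint limit law is supported on the diagonal $\{\mu^D=\nu^D\}$, completing the proof.
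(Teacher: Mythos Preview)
Your strategy --- iterating the joint Gibbs--Markov relation and arguing that in the limit both coordinates become the same measurable functional of the accumulated binding field --- is a genuinely different route from the paper's. The paper instead considers, for $p\in(0,1)$, the convex combination $\wt M_p^D := pM^D + (1-p)\wh M^D$, verifies that this family also satisfies all of conditions (1--5) of Theorem~\ref{BL2-thm2.8} with constant $c_p = pc + (1-p)\hat c$ (the only real work being condition~(5), handled by expanding the Laplace-transform expectation and controlling the two cross terms via Cauchy--Schwarz and the known $Z^D$ tail asymptotics), and then concludes $pM^D+(1-p)\wh M^D \laweq c_p Z^D$ for every~$p$. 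Setting $U:=c^{-1}\langle M^D,f\rangle$ and $V:=\hat c^{-1}\langle \wh M^D,f\rangle$, the law of $qU+(1-q)V$ is therefore independent of $q\in[0,1]$; a short Jensen's-inequality argument applied to $E\texte^{-(qU+(1-q)V)}$ then forces $U=V$ a.s. This is self-contained and uses Theorem~\ref{BL2-thm2.8} only as a black box.

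Your approach is in fact the one the paper explicitly acknowledges as a valid alternative (it points to \cite[Theorem~6.1]{BL2}) but elects not to pursue. However, your final paragraph has a real gap. The object $\texte^{\alpha\Psi_n}\mu^{\wt D_n}$ is \emph{not} a regularization of the form~\eqref{E:2.7}: in~\eqref{E:2.7} the approximating measure is a deterministic functional of a smooth Gaussian field, whereas here $\mu^{\wt D_n}$ is itself a random critical-chaos measure, independent of~$\Psi_n$. The references you invoke (\cite{DRSV1,DRSV2,JS,Powell}) establish convergence and uniqueness for specific families of Gaussian regularizations and do not directly yield that $\texte^{\alpha\Psi_n}\mu^{\wt D_n}$ becomes, as $n\to\infty$, asymptotically measurable with respect to $\sigma(\Psi_1,\Psi_2,\dots)$ --- which is exactly what you need to collapse the joint law onto the diagonal. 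That statement is the substance of \cite[Theorem~6.1]{BL2} and requires its own argument; if you cite that result, your route goes through, but as written the crucial limiting step is not justified by the sources you name.
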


\begin{proof}
	The proof could in principle be referred to  \cite[Theorem~6.1]{BL2} which states that any measure~$M^D$ satisfying conditions (1-5) can be represented, by sequential application of the Gibbs-Markov property \eqref{E:2.19ua}, as a weak limit of approximating measures that depend only on the ``binding'' fields $\Phi^{D,\wt D}$ used in \eqref{E:2.19ua}. We will instead provide an independent argument that only uses the conclusion of Theorem~\ref{BL2-thm2.8}.
	
	Given~$p\in(0,1)$, consider the random measure
	\begin{equation}
		\wt M_p^D:=pM^D+(1-p)\wh M^D.
	\end{equation}
	By our assumptions, the family $\{\wt M^D_p\colon D\in\mathfrak D\}$ satisfies conditions (1-4) of Theorem~\ref{BL2-thm2.8}. We claim that it satisfies also condition~(5) with constant
	\begin{equation}
		c_p:=p c+(1-p)\hat c.
	\end{equation} 
	For this pick $K\ge1$  and $\delta>0$, let $T, T_\delta$ be defined as in~(5),  abbreviate
	\begin{equation}
		X(\cdot):=K^4 M^{K^{-1} T  }(K^{-1}\cdot)\quad\text{and}\quad 
		Y(\cdot):=K^4\wh M^{K^{-1} T  }(K^{-1}\cdot)
	\end{equation}
	and consider the rewrite
	\begin{equation}
	\begin{aligned}
		\label{E:3.31ie}
		E\Bigl(K^4\wt M_p^{K^{-1} T  }(K^{-1} T_\delta  )&\texte^{-\lambda K^4\wt M_p^{K^{-1} T  }(K^{-1} T  )}\Bigr)
		\\
		&=
		 pE\bigl( X( T_\delta  )\texte^{-\lambda p X( T  )}\bigr)+(1-p)E\bigl(Y( T_\delta  )\texte^{-\lambda(1-p)Y( T  )}\bigr)
		\\
		&\qquad-pE\Bigl( X( T_\delta  )\texte^{-\lambda p X( T  )}\bigl(1-\texte^{-\lambda(1-p)Y( T  )}\bigr)\Bigr)		
		\\
		&\qquad\qquad-(1-p)E\Bigl( Y( T_\delta  )\texte^{-\lambda (1-p) Y( T  )}\bigl(1-\texte^{-\lambda p X( T  )}\bigr)\Bigr).
	\end{aligned}
	\end{equation}
	Invoking \eqref{E:1.25a} for the two families of measures, as~$\lambda\downarrow0$ (with~$p$ fixed), the sum of the first two terms on the right divided by~$\log(1/\lambda)$ tends to $c_p\int_{ T_\delta  } r^{ T } (x)^2\textd x$, uniformly in~$K\ge1$. We claim that the two remaining terms are $o(\log(1/\lambda))$. 
	
	Focusing only on the third term by symmetry, the Cauchy-Schwarz inequality bounds this by the square root of
	\begin{equation}
		\label{E:3.33}
		E\Bigl( X( T_\delta  )^2\texte^{-2\lambda p X( T  )}\Bigr)E\Bigl(\bigl(1-\texte^{-\lambda (1-p) Y( T  )}\bigr)^2\Bigr).
	\end{equation}
	Since Theorem~\ref{BL2-thm2.8} applied to~$X$ and~$Y$ separately ensures that~$X$ and~$Y$ have the (marginal) law of $K^4 Z^{K^{-1} T  }$ which has the law of~$Z^{ T }$ by exact scaling properties of~$Z^{ T  }$ under dilation (cf~\cite[Corollary~2.2]{BL2}), the quantities in \eqref{E:3.33} are independent of~$K\ge1$. Dominating $X( T_\delta  )^2\texte^{-2\lambda p X( T  )}\le\frac1{\lambda p}X( T  )\texte^{-\lambda p X( T  )}$ shows, with the help of \cite[Corollary~2.7]{BL2} that the first term in \eqref{E:3.33} is $O(\lambda^{-1}\log(1/\lambda))$. From the known properties of~$Z^{ T }$  (see again \cite[Corollary~2.7]{BL2}) we also get that the limit
	\begin{equation}
		\lim_{\lambda\downarrow0}\frac1{\lambda\log(\ffrac1\lambda)}\,E\bigl(1-\texte^{-\lambda Y( T  )}\bigr)
	\end{equation}
	exists and is finite. Then
	\begin{equation}
		E\Bigl(\bigl(1-\texte^{-\lambda Y( T  )}\bigr)^2\Bigr)=2E\bigl(1-\texte^{-\lambda Y( T  )}\bigr)-E\bigl(1-\texte^{-2\lambda Y( T  )}\bigr)
	\end{equation}
	shows that the second term in \eqref{E:3.33} is $o(\lambda\log(\ffrac1\lambda))$ as $\lambda\downarrow0$.  Hence, the quantity in \eqref{E:3.33} is $o(\log(\ffrac1\lambda)^2)$ and so the third (and by symmetry also the fourth) term on the right of \eqref{E:3.31ie} are $o(\log(\ffrac1\lambda))$, uniformly in~$K\ge1$.
	
	Having verified the conditions of Theorem~\ref{BL2-thm2.8}, 
	we thus get that for each~$p\in[0,1]$ and each~$D\in\mathfrak D$, 
	\begin{equation}
		\label{E:4.17w}
		pM^D+(1-p)\wh M^D \,\laweq \bigl(p c+(1-p)\hat c\bigr)Z^D
	\end{equation}
	Pick non-negative $f\in C_\cc(\overline D)$ and abbreviate $U:=c^{-1}\langle M^D,f\rangle$, $  V :=\hat c^{-1}\langle\wh M^D,f\rangle$ and $q:=pc/c_p$.  By \eqref{E:4.17w}  the law of the non-negative random variable $qU+(1-q) V $ does not depend on~$q\in[0,1]$. This implies $E\texte^{-(qU+(1-q)V)}=qE(\texte^{-U})+(1-q)E(\texte^{- V })$ and so, by Jensen's inequality, $\texte^{-(qU+(1-q) V  )}=q\texte^{-U}+(1-q)\texte^{- V }$ a.s. By Jensen's inequality again, this is only possible if $U=  V  $ a.s. As this holds for all non-negative~$f\in C_\cc(\overline D)$, we get \eqref{E:3.26} as desired.
\end{proof}

As a consequence of the above, we do get a characterization of cLQG by its behavior under the Cameron-Martin shifts of the underlying CGFF:

\begin{theorem}
	\label{thm-4.3}
	Let $\{M^D\colon D\in\mathfrak D\}$ be a family of random Radon measures satisfying the conditions of Theorem~\ref{BL2-thm2.8} with~$c:=1$ in \eqref{E:1.25a}. Suppose also that, for each~$D\in\mathfrak D$, there exists a coupling of~$M^D$ with the CGFF~$h^D$ such that for all~$\rho\in \mathcal{M}_\cc(D)$ and for~$\BbbP_\rho$ defined by
	\begin{equation}
		\BbbP_{\rho}(A):=\E\bigl(\texte^{h^D(\rho)-\frac12\Var(h^D(\rho))}\Ind_A\bigr),
	\end{equation}
	we have
	\begin{equation}
		\label{E:4.19iu}
		M^D\text{\rm\ under }\BbbP_{\rho}
		\,\,\,\laweq\,\,\, 
		\texte^{\alpha (\wh G^D\rho)(x)}M^D(\textd x)\text{\rm\ under }\BbbP.
	\end{equation}
	Then
	\begin{equation}
		\label{E:4.20io}
		M^D=Z^D\text{\rm\ \  a.s.}
	\end{equation}
	where $Z^D$ is the cLQG associated with~$h^D$.
\end{theorem}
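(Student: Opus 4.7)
The plan is to apply Theorem~\ref{thm-3.10} with $\wh M^D := Z^D$, the cLQG on~$D$ constructed from the very CGFF~$h^D$ to which $M^D$ is coupled by hypothesis. Since $Z^D$ is a measurable functional of $h^D$ (via the limit of the regularizations $Z^D_t$ in \eqref{E:2.7}), this yields an automatic joint coupling of $(M^D,Z^D)$ on a common probability space. Conditions~(1)--(3) and the first half of~(4) of Theorem~\ref{BL2-thm2.8} hold for both families, and the normalization \eqref{E:1.25a} is satisfied with $c=\hat c=1$ by hypothesis for $M^D$ and by the defining normalization of $Z^D$. What remains is to verify the joint Gibbs-Markov identity~\eqref{E:3.25u}.

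Fix $\wt D\subseteq D$ with $\Leb(D\setminus\wt D)=0$ and invoke the Gibbs-Markov decomposition $h^D|_{\wt D} = h^{\wt D}+\Phi^{D,\wt D}$ of the CGFF, with the two pieces independent. For the cLQG, this descends to the pathwise identity $Z^D(\rmd x)=\rme^{\alpha\Phi^{D,\wt D}(x)}Z^{\wt D}(\rmd x)$ a.s.\ on~$\wt D$, where $Z^{\wt D}$ is the cLQG of $h^{\wt D}$ --- a known consequence of the construction of cLQG and the Cameron-Martin covariance of the approximating measures $Z^D_t$. The parallel pathwise decomposition for~$M^D$, namely $M^D(\rmd x)=\rme^{\alpha\Phi^{D,\wt D}(x)}\wt M^{\wt D}(\rmd x)$ a.s.\ on~$\wt D$ with $\wt M^{\wt D}$ coupled to~$h^{\wt D}$ as in the hypothesis and independent of~$\Phi^{D,\wt D}$, is the crux. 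It is extracted from~\eqref{E:4.19iu} by recasting the Cameron-Martin identity as a cocycle relation on the regular conditional law $K(h,\cdot)$ of $M^D$ given $h^D$: under $h\mapsto h+\wh G^Df$, the kernel $K$ transforms as the pushforward under $\mu\mapsto\rme^{\alpha(\wh G^Df)(\cdot)}\mu$. Applying this cocycle (after a suitable approximation) to the random harmonic shift $\Phi^{D,\wt D}$ yields the sought decomposition of $M^D$ with the common binding field. Combining the two decompositions gives~\eqref{E:3.25u}, and Theorem~\ref{thm-3.10} then delivers $M^D=Z^D$ a.s., which is~\eqref{E:4.20io}.

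The main obstacle is the extension of the Cameron-Martin cocycle from deterministic shifts $\wh G^Df$ with $f\in C_0(D)$ to the random harmonic shift~$\Phi^{D,\wt D}$, which does not itself lie in $\wh G^D(C_0(D))$. One route is approximation of $\Phi^{D,\wt D}$ by a sequence of Cameron-Martin shifts $\wh G^Df_n$ and passage to the limit via tightness and continuity of the relevant Laplace functionals. Alternatively, one may bypass the pointwise cocycle extension by verifying~\eqref{E:3.25u} directly at the level of joint Laplace functionals of $(M^D,Z^D)$, reducing the identity to a Gaussian calculation that invokes~\eqref{E:4.19iu}, the independence $h^{\wt D}\perp\Phi^{D,\wt D}$, and the covariance $C^{D,\wt D}$ of the binding field from Lemma~\ref{lemma-3.3}.
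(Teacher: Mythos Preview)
Your overall strategy --- reduce to Theorem~\ref{thm-3.10} with $\wh M^D:=Z^D$ and verify the joint Gibbs--Markov relation~\eqref{E:3.25u} --- is exactly the paper's. The difficulty lies in how you propose to verify~\eqref{E:3.25u}.

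Your primary route, extracting a pathwise decomposition $M^D=\rme^{\alpha\Phi^{D,\wt D}}\wt M^{\wt D}$ from a conditional-kernel cocycle and then extending the cocycle to the random harmonic shift~$\Phi^{D,\wt D}$, faces precisely the obstacle you flag: $\Phi^{D,\wt D}$ is not of the form $\wh G^Df$ for $f\in C_0(D)$, and approximating it by such shifts while controlling the measure-valued integrals is not straightforward. The paper does not attempt this.

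Instead, the paper works in the \emph{opposite direction}: rather than decomposing the given coupling $(M^D,h^D)$, it takes a fresh independent triple $(M^{\wt D},h^{\wt D},\Phi^{D,\wt D})$ and shows, via a direct Laplace-functional computation, that
\[
\bigl(\rme^{\alpha\Phi^{D,\wt D}}M^{\wt D},\,h^{\wt D}+\Phi^{D,\wt D}\bigr)\ \laweq\ (M^D,h^D).
\]
The computation uses only \emph{deterministic} Cameron--Martin shifts: first absorb $\Phi^{D,\wt D}(f_1)$ as a shift of the Gaussian $\Phi^{D,\wt D}$ by $C^{D,\wt D}f_1$; then apply~\eqref{E:4.19iu} in domain~$\wt D$ to absorb $h^{\wt D}(f_1)$ as multiplication by $\rme^{\alpha\wh G^{\wt D}f_1}$; then use $\wh G^{\wt D}+C^{D,\wt D}=\wh G^D$. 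At this point the remaining expectation involves only the \emph{marginal} law of $M^{\wt D}$, and here one invokes the marginal Gibbs--Markov property $\rme^{\alpha\Phi^{D,\wt D}}M^{\wt D}\laweq M^D$ --- which is already part of the assumed conditions of Theorem~\ref{BL2-thm2.8}. Finally,~\eqref{E:4.19iu} in domain~$D$ is applied in reverse. Since $Z^D=F(h^D)$ for a measurable~$F$ satisfying $F(h+\varphi)=\rme^{\alpha\varphi}F(h)$, the equality in law for $(M,h)$ immediately yields~\eqref{E:3.25u}.

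Your second alternative gestures at this Laplace-functional route but omits the crucial use of the \emph{marginal} Gibbs--Markov property of~$M$; without it the calculation does not close. That is the missing idea.
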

\begin{proof}
	Let~$D,\wt D\in\mathfrak D$ be such that $\wt D\subseteq D$ and~$\leb(D\smallsetminus\wt D)=0$ and let~$\Phi^{D,\wt D}=\NN(0,C^{D,\wt D})$ be a sample of the ``binding'' field independent of~$M^{\wt D}$ and~$h^{\wt D}$. Let also~$\rho\in \mathcal{M}_\cc(\wt D)$ and $f\in C_\cc(\wt D)$ with $f\ge 0$. An elementary Gaussian integration and \eqref{E:4.19iu} then show 
	\begin{equation}
		\begin{aligned}
			\E\Bigl(\texte^{h^{\wt D}(\rho)+ \langle\rho,\Phi^{D,\wt D}\rangle }&\texte^{-\langle \texte^{\alpha\Phi^{D,\wt D}}M^{\wt D},f\rangle}\Bigr) 
			=\texte^{\frac12\langle \rho, C^{D,\wt D}\rho\rangle}
			\E\biggl(\exp\Bigl\{h^{\wt D}(\rho)-\bigl\langle \texte^{\alpha\Phi^{D,\wt D}+\alpha C^{D,\wt D}\rho}M^{\wt D},f\bigr\rangle\Bigr\}\biggr)
			\\
			&
			=\texte^{\frac12\langle \rho, G^{\wt D}\rho\rangle+\frac12\langle \rho, C^{D,\wt D}\rho\rangle}
			\E\biggl(\exp\Bigl\{-\bigl\langle \texte^{\alpha\Phi^{D,\wt D}+\alpha  C^{D,\wt D}\rho}\texte^{\alpha  G^{\wt D}\rho}M^{\wt D},f\bigr\rangle\Bigr\}\biggr).
		\end{aligned}
	\end{equation}
	The expectation on the right-hand side involves only~$M^{\wt D}$ and so is determined by the \emph{marginal} law of~$M^{\wt D}$ only. By assumption, this law obeys the Gibbs-Markov property meaning that $\texte^{\alpha\Phi^{D,\wt D}}M^{\wt D}\,\laweq\,M^D$. Using also that~$\wh G^{\wt D}+C^{D,\wt D}=\wh G^D$ and, one more time, \eqref{E:4.19iu} along with a Gaussian integral yields 
	\begin{equation}
		\E\Bigl(\texte^{h^{\wt D}(\rho)+\langle\rho,\Phi^{D,\wt D}\rangle}\texte^{-\langle \texte^{\alpha\Phi^{D,\wt D}}M^{\wt D},f\rangle}\Bigr) 
		=\E\Bigl(\texte^{h^{D}(\rho)}\texte^{-\langle M^{D},f\rangle}\Bigr).
	\end{equation}
	 Using the pair $\texte^{\alpha\Phi^{D,\wt D}} M^{\wt D},h^{\wt D}+\Phi^{D,\wt D}$ as a ``new'' definition of $(M^D,h^D)$ we may assume that $(M^D, h^D)$ and $( \texte^{\alpha\Phi^{D,\wt D}}  M^{\wt D}, h^{\wt D} + \Phi^{D,\wt D})$ are coupled so that
	\begin{equation}
		\label{e:shift-GM}
		\Bigl(\langle\texte^{\alpha\Phi^{D,\wt D}} M^{\wt D},f\rangle,h^{\wt D}(\rho)+\langle\Phi^{D,\wt D},\rho\rangle\Bigr)  =\bigl(\langle M^D,f\rangle,h^D(\rho)\bigr)
	\end{equation}
	holds pointwise for all~$f\in C_\cc(\wt D)$ and all~$\rho\in\MM_\cc(\wt D)$. 
	
	 Let $h^D_t$ and $\rho_{x,\rme^{-t}}$ be as in~\eqref{e:ht-circ} and assume containment in the (full measure) event that~$t,x\mapsto h_t(x)$ is jointly continuous.  By harmonicity of $\Phi^{D,\wt{D}}$, for any $x \in \wt D$ and~$t$  so large that~$\texte^{-t}$  becomes smaller than  the distance of~$x$ to the complement of~$\wt D$, 
	\begin{equation}
		\bigl\langle \rho_{x, \rme^{-t}},\Phi^{D,\wt{D}}\bigr\rangle= \Phi^{D,\wt{D}}(x) \,.
	\end{equation}
	 Under the coupling \eqref{e:shift-GM}  we then get
	\begin{equation}
		\label{E:4.23w}
		h^{\wt D}_t(x) + \Phi^{D,\wt{D}}(x)
		= \,\,\, h^D_t(x)
	\end{equation}
	 and, in particular, 
	\begin{equation}
		\Var\bigl(h^D_t(x)\bigr) - \Var\bigl(h^{\wt D}_t(x)\bigr) = \Var\bigl(\Phi^{D,\wt D}(x)\bigr)
	\end{equation}
	simultaneously for all such~$x$ and~$t$.
	
	The variance on the right-hand side equals $g\log r^D(x) - g\log r^{\wt D}(x)$ by Lemma~\ref{lemma-3.3} and~\eqref{E:3.3}. Since $\alpha^2=4/g$, it follows that
	\begin{equation}\label{e:r-shift}
		\texte^{-\frac12\alpha^2\Var(h^D_t(x))} r^D(x)^2 = \rme^{-\frac12\alpha^2\Var(h^{\wt D}_t(x))} r^{\wt D}(x)^{2} \,.
	\end{equation}
	 From the definition~\eqref{E:2.7} of~$Z^D_t$ and the pointwise equality \eqref{E:4.23w} we then readily get 
	\begin{equation}
		\label{e:4.26}
		\int f(x) Z^D_t(\rmd x) = \int f(x) \rme^{\alpha \Phi^{D,\wt D}(x)} Z^{\wt D}_t(\rmd x) \,,
	\end{equation}
	for all $f \in C_\cc(\wt{D})$ and~$t$  so large that~$\texte^{-t}$  becomes smaller than  the distance between the support of~$f$ and the complement of~$\wt D$.   Taking  $t\to\infty$  along which the weak convergence of~$Z_t^D$ and~$Z_t^{\wt{D}}$ to~$Z^D$ and, resp.,~$Z^{\wt{D}}$ holds almost-surely, we may pass to the limit in~\eqref{e:4.26}. This shows that,  under the coupling \eqref{e:shift-GM}, 
	\begin{equation}
		Z^D = \rme^{\alpha \Phi^{D,\wt D}} Z^{\wt D}\quad\text{a.s.}
	\end{equation}
	 as measures on~$\wt D$. 
	Since the measures on both sides do not charge $D \setminus \wt{D}$, 
	 this holds also in the sense of measures on~$D$. From \eqref{e:shift-GM}  we then  obtain
	\begin{equation}
		\bigl(\texte^{\alpha\Phi^{D,\wt D}} M^{\wt D},\texte^{\alpha\Phi^{D,\wt D}} Z^{\wt D}\bigr)  \,\,\,\laweq\,\,\,\bigl(M^D,Z^D).
	\end{equation}
	Theorem~\ref{thm-3.10} now implies \eqref{E:4.20io}.
\end{proof}

If it were not for the reliance on the conditions of Theorem~\ref{BL2-thm2.8}, Theorem~\ref{thm-4.3} would run very close to the characterization of  the  \emph{subcritical} Gaussian Multiplicative Chaos measures by their behavior under Cameron-Martin shifts established by Shamov~\cite{Shamov}. In order to get the critical case aligned with subcritical ones, one would have to prove the conditions of Theorem~\ref{BL2-thm2.8} directly from \eqref{E:4.19iu}, a task we will consider returning to in future work.

\section{Moment calculations and estimates}
\label{sec4}\noindent
We are ready to move to the proof of our main results.
In this section we perform moment calculations for the  near-extremal measure~$\zeta^D$; these will  feed directly into the proofs of Theorem~\ref{thm-A}. Throughout, we will assume that a domain~$D\in\mathfrak D$ and a set of approximating lattice domains $\{D_N\}_{N\ge1}$ have been fixed. 

\subsection{Strategy and key lemmas}
In order to motivate the forthcoming derivations, let us first discuss the main steps of the proof of Theorem~\ref{thm-A}. Focussing only on the limit of the measures~$\zeta_N^D$, our aim is to show convergence in law of $\langle\zeta^D_N,f\rangle$ for a class of test functions~$f$. We will proceed using a similar strategy as in~\cite{BL1,BL2,BL3,BL4}: First prove tightness, which permits extraction of subsequential limits, and then identify the limit uniquely by its properties. 

The tightness will be resolved by a first-moment calculation. However, since~$Z^D$ is known to lack the first moment and the convergence of $\langle\zeta^D_N,f\rangle$ cannot thus hold in the mean, the moment calculation must be restricted by a suitable truncation. In order to identify the subsequential limits with the expression on the right of \eqref{E:2.10}, we invoke Theorem~\ref{BL2-thm2.8}. This requires checking  the conditions in the statement of which the most subtle
is the uniform Laplace-transform tail \eqref{E:1.25a}. It is here where we need the present version  (of Theorem~\ref{BL2-thm2.8})  as opposed to \cite[Theorem~2.8]{BL2} because, due to our reliance on subsequential convergence, we do not have a direct argument for dilation covariance of the limit  measure. 

Both the tightness and the uniform Laplace-transform tail will be inferred from the following two propositions whose proof constitutes the bulk of this section:

\begin{proposition}[Truncated first moment]
	\label{lem:first-moment}
	For all bounded continuous~$f\colon D \times \bbR\to\R$,
	\begin{equation}
		\label{eq:1m-as}
		\lim_{u\to\infty} \limsup_{N\to\infty}
		\biggl|\frac1u\bbE \Bigl(\la\zeta^D_N\,,f \ra\,;\; h^{D_N}\leq m_N+u \Bigr) 
		-\frac{2\texte^{2c_0/g}}{g\sqrt{2\pi g}} \int_{D\times(0,\infty)}\!\!\!\!\! s\; \rme^{-\frac{s^2}{2g}}\,
		r^D(x)^2 f(x,s)\;\rmd x \,\rmd s\biggr|=0.
	\end{equation}
	Moreover, there exists a constant $c'=c'(D)<\infty$ such that
	\begin{equation}
		\label{eq:1m-ub}
		\bbE \Bigl(\zeta^D_N(D\times\bbR);\, h^{D_N}\leq m_N+u \Bigr)\leq c'u
	\end{equation}
	for all $N\ge1$ and $u\ge1$.
\end{proposition}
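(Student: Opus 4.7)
The plan is a direct first-moment computation by conditioning. For each $x\in D_N$, parameterize field values through $s:=(m_N-h_x^{D_N})/\sqrt{\log N}$, so that
\begin{equation*}
\bbE\bigl(\langle\zeta_N^D,f\rangle;\,h^{D_N}\le m_N+u\bigr)=\tfrac{1}{\log N}\sum_{x\in D_N}\int_{-u/\sqrt{\log N}}^\infty \Psi_N(x,s,u)\,f(x/N,s)\,ds,
\end{equation*}
where
\begin{equation*}
\Psi_N(x,s,u):=\sqrt{\log N}\,p_x(m_N-s\sqrt{\log N})\,e^{-\alpha s\sqrt{\log N}}\,\bbP\bigl(h^{D_N}\le m_N+u\,\big|\,h_x^{D_N}=m_N-s\sqrt{\log N}\bigr),
\end{equation*}
and $p_x$ denotes the Gaussian density of variance $G^{D_N}(x,x)$. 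The two ingredients are the Green function asymptotic (Lemma~\ref{lemma-3.2}) and the ballot asymptotic (Lemma~\ref{prop:asymptotic}).

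Using Lemma~\ref{lemma-3.2} together with $m_N/G^{D_N}(x,x)=\alpha+o(1)$, valid since $\alpha=2/\sqrt g$ and $m_N\sim 2\sqrt g\log N$, the linear-in-$s\sqrt{\log N}$ contributions to the exponent of $p_x(m_N-s\sqrt{\log N})e^{-\alpha s\sqrt{\log N}}$ cancel, yielding by direct computation, uniformly for $x=\lfloor Nz\rfloor$ with $z\in D^\delta$ and $s$ in any fixed compact set,
\begin{equation*}
\sqrt{\log N}\,p_x(m_N-s\sqrt{\log N})\,e^{-\alpha s\sqrt{\log N}}=\tfrac{(\log N)^{3/2}}{N^2\sqrt{2\pi g}}\,e^{2c_0/g}\,r^D(z)^2\,e^{-s^2/(2g)}\bigl(1+o(1)\bigr).
\end{equation*}
Lemma~\ref{prop:asymptotic}, combined with $\cR(t)/t\to1$ from~\eqref{e:lem:rw-as:fg}, gives for each fixed $s>0$,
\begin{equation*}
\bbP\bigl(h^{D_N}\le m_N+u\,\big|\,h_x^{D_N}=m_N-s\sqrt{\log N}\bigr)=\tfrac{2\cL^z(u)\,s}{g\sqrt{\log N}}\bigl(1+o(1)\bigr).
\end{equation*}
Multiplying, the powers of $\log N$ recombine to $\log N/N^2$; the normalized sum over $x\in D_N$ with $x/N\in D^\delta$ converges as a Riemann sum to $\int_{D^\delta}dz$. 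Division by $u\log N$ produces
\begin{equation*}
\tfrac{1}{u}\cdot\tfrac{2\,e^{2c_0/g}}{g\sqrt{2\pi g}}\int_{D^\delta}dz\int_0^\infty ds\,\cL^z(u)\,s\,r^D(z)^2\,e^{-s^2/(2g)}\,f(z,s)+o(1),
\end{equation*}
and then sending $N\to\infty$, next $u\to\infty$ via $\cL^z(u)/u\to1$ locally uniformly, and finally $\delta\downarrow 0$ by dominated convergence (since $r^D$ is bounded on $D$) yields the right-hand side of~\eqref{eq:1m-as}.

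The main obstacle is the uniformity of these asymptotics together with tail control. Four regions of $(x,s)$ need separate handling: (i) the main region $z\in D^\delta$, $|s|\le A$, covered above; (ii) $s\in[-u/\sqrt{\log N},0]$, where $p_x(m_N-s\sqrt{\log N})e^{-\alpha s\sqrt{\log N}}=O((\log N)^{3/2}/N^2)$ and the conditional probability is at most $1$, so the interval length $u/\sqrt{\log N}$ gives an $O(u)$ contribution which vanishes as $A\to\infty$; (iii) $s\in[A,(\log N)^{1/2-\delta}]$, controlled by Lemma~\ref{prop:asymptotic} with the linear upper bound on $\cR$ implied by~\eqref{e:lem:rw-as:fg} and the Gaussian factor $e^{-s^2/(2g)}$; (iv) $s\ge(\log N)^{1/2-\delta}$, controlled via Lemma~\ref{prop:UB} eq.~\eqref{E:3.17iu} and the exponential smallness of the density. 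Vertices $x\in D_N$ with $x/N\in D\setminus D^\delta$ are handled using the crude bound $G^{D_N}(x,x)\le g\log N+C$ from Lemma~\ref{lemma-3.2a} combined with~\eqref{E:3.17iu}; their contribution is $O(u)\cdot\Leb(D\setminus D^\delta)$ and vanishes as $\delta\downarrow 0$.

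For the upper bound~\eqref{eq:1m-ub}, I would apply \eqref{E:3.17iu} uniformly to get
\begin{equation*}
\bbP\bigl(h^{D_N}\le m_N+u\,\big|\,h_x^{D_N}=m_N-s\sqrt{\log N}\bigr)\le\tfrac{c(1+u)(1+u+s\sqrt{\log N})}{\log N}\quad\text{for }s\ge0,
\end{equation*}
and the trivial bound for $s\in[-u/\sqrt{\log N},0]$. Combined with the Gaussian density estimate, the integrals over $s$ and the sum over $x$ yield a total of order $u+u^2/\sqrt{\log N}$, which is $O(u)$ for $u\le c_1\sqrt{\log N}$. For larger $u$, the unconditioned first moment $\bbE\,\zeta_N^D(D\times\bbR)=O(\sqrt{\log N})=O(u)$ already suffices; this follows from the direct estimate $\bbE\,e^{\alpha(h_x-m_N)}\le(\log N)^{3/2}e^{O(1)}/N^2$ via Lemma~\ref{lemma-3.2a}.
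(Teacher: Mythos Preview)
Your computation for the interior region $x/N\in D^\delta$ matches the paper's: same conditioning, same Gaussian density expansion, same use of Lemma~\ref{prop:asymptotic} followed by $\cL^x(u)/u\to1$. The tail regions in the $s$-variable are also handled as in the paper (the paper does it by first restricting to compactly supported~$f$ and then extending via the bound~\eqref{e:5.24}).

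The genuine gap is your treatment of boundary vertices and, correspondingly, of the uniform upper bound~\eqref{eq:1m-ub}. You write that vertices with $x/N\in D\smallsetminus D^\delta$ are ``handled using the crude bound $G^{D_N}(x,x)\le g\log N+C$ from Lemma~\ref{lemma-3.2a} combined with~\eqref{E:3.17iu}.'' But~\eqref{E:3.17iu} is stated only for $x\in D_N^\delta$, with the constant~$c=c(\delta)$ depending on~$\delta$; it cannot be invoked for points closer than~$\delta N$ to the boundary. Without a ballot factor of order~$1/\log N$ for such points, the density bound alone leaves a contribution of order $\sqrt{\log N}\cdot\Leb(D\smallsetminus D^\delta)$ per unit~$u$, which diverges with~$N$. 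The paper closes this gap via Lemma~\ref{lemma-tight}: for each~$x$ one uses the Gibbs-Markov decomposition with the ball~$B$ of radius $d_\infty(x,(D_N)^\cc)$ so that~$x$ sits at the \emph{center} of~$B$ (where the ballot estimate applies with a uniform constant), then FKG to pass from $B'$ to~$B$, and Fernique/Borell--TIS to control the binding field~$\varphi^{D_N,B}$. This recentering trick, together with the identity $\frac{\log L}{L^2}e^{\alpha m_L}=\frac{L^2}{N^2}\sqrt{\frac{\log N}{\log L}}\frac{\log N}{N^2}e^{\alpha m_N}$, is what yields the uniform per-vertex bound $c\,u\,(\log N)/N^2$ needed both for~\eqref{eq:1m-ub} and for showing that the boundary strip contributes $O(u)\cdot N^{-2}|D_N\smallsetminus D_N^\delta|$. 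Your proposal is missing this step; the same omission affects your derivation of~\eqref{eq:1m-ub}, since the argument via~\eqref{E:3.17iu} again only covers interior points.

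A minor point: your region~(ii) argument (``$O(u)$ contribution which vanishes as $A\to\infty$'') is garbled. With only the trivial bound~$1$ on the conditional probability you indeed get~$O(u)$, which does not vanish; you need the ballot upper bound~\eqref{E:3.17iu} there too, giving $O(u^3/\log N)\to0$ as~$N\to\infty$ for fixed~$u$.
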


\begin{proposition}[Truncated second moment]
	\label{lem:second-moment}
	For each $\delta>0$ there exists a continuous function $\theta\colon[0,\infty)\to[0,\infty)$ with $\lim_{u\to\infty} \theta(u)/u=0$ such that
	\begin{equation}
		\label{eq:2m}
		\limsup_{N\to\infty}\bbE\Bigl(\zeta^D_N\bigl(D^\delta\times(-\infty,\delta^{-1}]\bigr)^2 ;\; h^{D_N}\leq m_N+u\Bigr)
		\leq \rme^{\alpha u +\theta(u)}
	\end{equation}
	for all $u\ge0$.
\end{proposition}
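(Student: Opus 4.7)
My plan is to expand the second moment as a double lattice sum, split the sum according to the separation $r:=|x-y|$, and for each pair control the contribution via a Gibbs--Markov decomposition combined with the ballot bounds of Lemma~\ref{prop:UB}. The target is a bound polynomial in $u$, which sits well below $e^{\alpha u+\theta(u)}$ for $\theta(u):=O(\log(2+u))=o(u)$.

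\textbf{Step 1 (Double sum).} First, I would write
\begin{equation*}
\bbE\Bigl[\zeta^D_N(D^\delta\times(-\infty,\delta^{-1}])^2;\,h^{D_N}\le m_N+u\Bigr]=\frac{1}{(\log N)^2}\sum_{x,y\in D_N^\delta}I_N(x,y,u),
\end{equation*}
where $I_N(x,y,u):=\bbE\bigl(e^{\alpha(h^{D_N}_x+h^{D_N}_y-2m_N)}\1_{\{h^{D_N}_x,h^{D_N}_y\ge m_N-\delta^{-1}\sqrt{\log N}\}}\1_{\{h^{D_N}\le m_N+u\}}\bigr)$. Pairs with $x=y$ or $|x-y|\le r_0$ (for a fixed constant $r_0$) are handled by the crude bound $e^{\alpha(h_x-m_N)}\le e^{\alpha u}$ on the event, which reduces the corresponding sum to a first-moment estimate:
\begin{equation*}
\frac{C(r_0)\,e^{\alpha u}}{\log N}\,\bbE\bigl[\zeta^D_N(D^\delta\times(-\infty,\delta^{-1}]);\,h^{D_N}\le m_N+u\bigr]\le \frac{C'\,u\,e^{\alpha u}}{\log N}
\end{equation*}
by Proposition~\ref{lem:first-moment}, which vanishes as $N\to\infty$.

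\textbf{Step 2 (Density factorization).} For $|x-y|=r>r_0$, I would parametrize $h^{D_N}_x=m_N-s$, $h^{D_N}_y=m_N-t$ and disintegrate against the joint Gaussian density $p_{x,y}$. Using Lemma~\ref{lemma-3.2} to expand $G^{D_N}(x,x), G^{D_N}(y,y), G^{D_N}(x,y)$ and the identity $m_N^2/(2g\log N)=2\log N-\tfrac{3}{2}\log\log N+o(1)$, the leading $m_N$-terms in the Gaussian quadratic form cancel with $e^{-\alpha(s+t)}$, yielding
\begin{equation*}
e^{-\alpha(s+t)}p_{x,y}(m_N-s,m_N-t)=\frac{C(\log N)^3}{r^4}\,q^\star_{x,y}(\sigma,\tau)\,e^{o(1)},
\end{equation*}
where $(\sigma,\tau):=((s+t)/2,(s-t)/2)$ and $q^\star_{x,y}$ is a centered Gaussian density with variances of order $\log N$ in the $\sigma$-direction and $\log r$ in the $\tau$-direction.

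\textbf{Step 3 (Ballot bound).} I would then estimate the conditional probability
$\bbP(h^{D_N}\le m_N+u\mid h^{D_N}_x=m_N-s,\,h^{D_N}_y=m_N-t)$
by Gibbs--Markov at the mesoscopic scale $K:=\lfloor r/4\rfloor$, which splits the conditional law into independent DGFFs on the disjoint boxes $B_K(x),B_K(y)$ plus a harmonic binding field. Applying \eqref{E:3.17iu} inside each box (one-point ballot of size $\log K$) and \eqref{E:3.16iu} on the complementary annular region (two-point ballot of size $\log(N/K)$) gives, in the regime of $(s,t)$ that dominates the integral,
\begin{equation*}
\bbP(\,\cdot\mid\cdot\,)\le\frac{C(1+u+s^+)(1+u+t^+)}{\log r\cdot\log(N/r)}.
\end{equation*}

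\textbf{Step 4 (Integration and summation).} Integrating the product of Steps~2 and~3 over $(s,t)\in[-u,\delta^{-1}\sqrt{\log N}]^2$, the polynomial factor $(1+u+s^+)(1+u+t^+)$ against the Gaussian density $q^\star_{x,y}$ yields $O((1+u)^2+\log N)$. Combined with the prefactor $(\log N)^3/r^4$ and the ballot denominator $\log r\cdot\log(N/r)$, each pair contributes at most $C(1+u)^2(\log N)^2/r^4$. Summing over $\sim N^2r$ pairs at separation $r$ and $r_0\le r\le N$, the series $\sum_r r\cdot r^{-4}$ converges, and dividing by $(\log N)^2$ yields
\begin{equation*}
\frac{1}{(\log N)^2}\sum_{\substack{x,y\in D_N^\delta\\ |x-y|>r_0}}I_N(x,y,u)\le C(1+u)^2.
\end{equation*}

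\textbf{Conclusion.} Adding the contributions from Steps~1 and~4, the total bound is $C(1+u)^2+o(1)$, which is at most $e^{\alpha u+\theta(u)}$ with $\theta(u):=2\log(2+u)+\log(2C)$, so that $\theta(u)/u\to 0$ as $u\to\infty$.

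\textbf{Main obstacle.} The principal technical difficulty is in Step~3: verifying the hypotheses of \eqref{E:3.16iu} requires controlling the oscillations of the harmonic extensions of the conditioning values on the inner and outer boundaries (the parameters $u_\star,v_\star$), uniformly across all admissible pairs $(x,y)$ and for the relevant range of $(s,t)$. A secondary subtlety is the careful choice of the mesoscopic scale $K$ so that the two-point ballot and the one-point ballot contributions are simultaneously controlled, and so that the $r$-summation in Step~4 converges.
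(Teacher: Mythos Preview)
Your overall strategy---expand into a double sum, separate short and long range, control the long-range contribution by a density expansion combined with ballot bounds---matches the paper's. But the argument contains a genuine error that makes the claimed polynomial bound $C(1+u)^2$ impossible, and the paper's bound $\rme^{\alpha u+o(u)}$ is in fact sharp.

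\textbf{Why the polynomial bound cannot hold.} Combine your claimed bound with the first-moment asymptotic of Proposition~\ref{lem:first-moment}, which gives $\bbE[\zeta^D_N(D^\delta\times(-\infty,\delta^{-1}]);\,h^{D_N}\le m_N+u]\sim c\,u$. Paley--Zygmund would then yield $\bbP(\zeta^D_N(D^\delta\times(-\infty,\delta^{-1}])>cu/2)\ge c'>0$ uniformly in large~$u$ and~$N$, contradicting the tightness of $\{\zeta^D_N(D\times\bbR)\}$ established in Lemma~\ref{lem:tightness}. Equivalently, since the limit is a multiple of $Z^D$ and $\bbE[Z^D(D)\rme^{-sZ^D(D)}]\sim c\log(1/s)$ as $s\downarrow0$ (cf.\ \eqref{E:1.4}), differentiation gives $\bbE[Z^D(D)^2\rme^{-sZ^D(D)}]\sim c/s$; with $s\asymp\rme^{-\alpha u}$ this forces the truncated second moment to grow like~$\rme^{\alpha u}$.

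\textbf{Where the argument breaks.} The error is in Step~2: after the Girsanov tilt by $\rme^{-\alpha(s+t)}$, the resulting Gaussian in $\sigma=(s+t)/2$ is \emph{not} centered. A direct computation (diagonalizing the covariance of $(h_x,h_y)$ in the $(1,1)$ and $(1,-1)$ directions) shows that the $\sigma$-marginal is centered at $-2\sqrt{g}\,(n-\ell)+O(\log n)$, far below the integration window $[-u,\delta^{-1}\sqrt n]$ whenever $\ell$ is bounded away from~$n$. Expanding the quadratic form around $\sigma=0$ leaves a residual linear factor $\rme^{-\beta_\ell\sigma}$ with $\beta_\ell=\alpha\cdot\frac{2(n-\ell)}{2n-\ell}\in(0,\alpha]$. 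On the boundary $\sigma=-u$ this contributes $\rme^{\beta_\ell u}$, and for the dominant range of~$\ell$ one has $\beta_\ell$ close to~$\alpha$. Consequently the integral in Step~4 is at least of order $\rme^{\alpha u}$, not $(1+u)^2+\log N$.

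\textbf{How the paper handles this.} Rather than integrating the two-point density directly, the paper conditions on the entire field along $\partial_\infty W$ (the boundary of two disjoint boxes of radius $\rme^\ell$ around $x$ and~$y$) and parametrizes by the harmonic extension's value $w$ at~$x$ and its oscillation~$r$ on~$W'$. This yields \emph{three} ballot factors (two inner, one outer) and an explicit $w$-integral of the form $\int\rme^{\alpha w}\bigl((w-u)^-+\rme^{-\rho(w-u)^+}\bigr)\rmd w$, which is precisely where the $\rme^{(\alpha+\epsilon)u}$ arises; optimizing over $\epsilon$ then produces $\theta(u)=o(u)$. Your Step~3 tries to shortcut this by quoting a two-pin ballot bound with denominator $\log r\cdot\log(N/r)$ and only polynomial $u$-dependence, but no such bound is available (and, by the argument above, none with merely polynomial dependence on $u$ can be combined with a correct Step~2 to give a finite answer). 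The ``main obstacle'' you flag---controlling $u_\star,v_\star$ for the binding field---is exactly the missing ingredient, and carrying it out correctly reproduces the exponential growth.
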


Before delving into the proof, let us see how tightness follows from this:

\begin{lemma}[Tightness]
	\label{lem:tightness}
	The family $\{\zeta^D_N(D\times\bbR)\colon N\ge1\}$ of $\R$-valued random variables is tight. Consequently, the measures $\{\zeta^D_N\colon N\ge1\}$ are tight relative to the vague topology on~$\overline D\times\overline\R$.
\end{lemma}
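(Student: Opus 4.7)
The plan is to deduce tightness of the total mass from the combination of the truncated first-moment bound in Proposition~\ref{lem:first-moment} and the tail estimate for the maximum in Lemma~\ref{lemma-DGFF-tail}. The vague tightness of the measures themselves will then follow by a soft argument using compactness of the underlying space.

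First, I would observe that the total mass $\zeta^D_N(D\times\R)=\frac{1}{\log N}\sum_{x\in D_N}\rme^{\alpha(h^{D_N}_x-m_N)}$ cannot be controlled in mean (this is precisely why the truncation appears throughout Section~\ref{sec4}), so I decouple the estimate into the high-field and low-field parts. Writing $\{M_N > m_N + u\}$ for the event that $\max_{x\in D_N}h^{D_N}_x > m_N+u$ and using Markov's inequality on the complementary event, I would bound, for any $M>0$ and $u\geq1$,
\begin{equation}
\bbP\bigl(\zeta^D_N(D\times\R)>M\bigr)\leq \frac{1}{M}\bbE\Bigl(\zeta^D_N(D\times\R);\,h^{D_N}\leq m_N+u\Bigr)+\bbP\bigl(M_N>m_N+u\bigr).
\end{equation}
By~\eqref{eq:1m-ub} the first term is at most $c' u/M$, and by Lemma~\ref{lemma-DGFF-tail} the second term is at most $cu\rme^{-\alpha u}$, both uniformly in $N\geq1$.

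Given $\varepsilon>0$, I would then first choose $u$ large enough so that $cu\rme^{-\alpha u}<\varepsilon/2$, and with this $u$ fixed, choose $M$ large enough so that $c'u/M<\varepsilon/2$. This yields $\sup_N\bbP(\zeta^D_N(D\times\R)>M)<\varepsilon$, which is the desired tightness of the total masses.

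For the second assertion, I would invoke the fact that the product $\overline D\times\overline\R$ is compact (since $\overline\R$ is the two-point compactification of $\R$), so vague convergence on this space is equivalent to weak convergence of finite measures. For random Borel measures on a compact metric space, tightness in the vague topology is equivalent to tightness of the total-mass random variables; since $\zeta^D_N$ is concentrated on $(D_N/N)\times\R\subseteq \overline D\times\overline\R$, the total mass coincides with $\zeta^D_N(D\times\R)$, which we have just shown to be tight. No step here is delicate: the real work has been done inside Proposition~\ref{lem:first-moment}, and this lemma is essentially a packaging statement that makes visible the consequence of the truncated first moment together with the upper tail of the maximum.
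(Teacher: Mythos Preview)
Your proof is correct and follows essentially the same approach as the paper's: split according to whether the maximum exceeds $m_N+u$, apply Markov's inequality with the truncated first-moment bound \eqref{eq:1m-ub} on one piece and the tail estimate \eqref{e:A1} on the other, then optimize over the parameters. The paper's version differs only cosmetically (it sets the threshold $a:=u^2$ rather than choosing $u$ and then $M$ sequentially) and is equally terse about the passage from total-mass tightness to vague tightness on the compact space $\overline D\times\overline\R$.
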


\begin{proof}
	Since $\zeta^D_N$ is supported on~$D\times\bbR$, the second part of the claim follows from the first and so we just need to show that $\{\zeta^D_N(D\times\bbR)\colon N\ge1\}$ is tight.  Let $u>0$ and~$c > 0$. By the union bound and Markov's inequality,
	\begin{equation}
		\bbP\bigl(\zeta^D_N(D\times \bbR)>a\bigr) 
		\leq \bbP\Bigl(\,\max_{x\in D_N} h^{D_N}>m_N+u\Bigr)+\frac{1}{a}\bbE\Bigl(\zeta^D_N(D\times \bbR);\,h^{D_N}\leq m_N+u\Bigr) \,.
	\end{equation}
	Lemma~\ref{lemma-DGFF-tail} and Proposition~\ref{lem:first-moment} dominate the right-hand side by a quantity of order $u\texte^{-\alpha u}+a^{-1}u$, uniformly in~$N\ge1$. This can be made arbitrary small by choosing, e.g., $a:=u^2$ and taking~$u$ sufficiently large.
\end{proof}

The proof of the Laplace transform asymptotic is more involved. The bulk of the computation is the content of  the next lemma. Here and in the following, we use the notation $f_1\otimes f_2(x,s):=f_1(x)f_2(s)$.

\begin{lemma}[Laplace transform asymptotic]
	\label{lem:tilted}
	For all bounded continuous $f_1\colon D\to[0,\infty)$ and nonnegative $f_2\in C_\cc(\R)$,
	\begin{equation}
		\label{e:2.3}
		\lim_{\lambda\downarrow 0} \limsup_{N\to\infty}\Biggl|
		\frac{\bbE\left(
			\big\la\zeta^D_N\,,f_1 \otimes f_2 \big\ra
			\rme^{-\lambda\la\zeta^D_N\,,\Ind_D \otimes f_2 \ra}\right)}{\log(\ffrac1\lambda)}
		-c_\star\int_{D\times(0,\infty)}\!\! s\; \rme^{-\frac{s^2}{2g}}\,
		r^D(x)^2 f_1(x)f_2(s)\;\rmd x \,\rmd s\Biggr|=0\,,
	\end{equation}
	where $c_\star$ is the constant from Theorem~\ref{thm-A}.
\end{lemma}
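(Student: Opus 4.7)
The plan is to truncate by the event $A_u:=\{h^{D_N}\le m_N+u\}$ with $u=u_K(\lambda):=\alpha^{-1}\log(\ffrac1\lambda)+K$, for a parameter $K>0$ to be sent to infinity at the end. Write $Y_N:=\la\zeta_N^D,f_1\otimes f_2\ra$ and $X_N:=\la\zeta_N^D,\1_D\otimes f_2\ra$, and note $Y_N\le\|f_1\|_\infty X_N$ pointwise. The strategy extracts the leading $\log(\ffrac1\lambda)$-behavior from Proposition~\ref{lem:first-moment} and controls the remainder by Proposition~\ref{lem:second-moment}. By a preliminary first-moment approximation (using Proposition~\ref{lem:first-moment} alone on $f_1\1_{D\setminus D^\delta}\otimes f_2$ and $f_1\otimes f_2\1_{(\delta^{-1},\infty)}$), we may assume $\supp f_1\subseteq D^\delta$ and $\supp f_2\subseteq(-\infty,\delta^{-1}]$, with $\delta\downarrow 0$ taken at the end, so that Proposition~\ref{lem:second-moment} directly applies.

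For the tail contribution from $A_u^\cc$, the elementary inequalities $Y_N\le\|f_1\|_\infty X_N$ and $x\texte^{-\lambda x}\le(\lambda\texte)^{-1}$ give the deterministic bound $Y_N\texte^{-\lambda X_N}\le\|f_1\|_\infty/(\lambda\texte)$, which combined with Lemma~\ref{lemma-DGFF-tail} yields $\E(Y_N\texte^{-\lambda X_N};A_u^\cc)\le C\|f_1\|_\infty u\texte^{-\alpha u}/\lambda=O(\texte^{-\alpha K}\log(\ffrac1\lambda))$; after division by $\log(\ffrac1\lambda)$ this is $O(\texte^{-\alpha K})$, vanishing as $K\to\infty$. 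On $A_u$ split $\texte^{-\lambda X_N}=1-(1-\texte^{-\lambda X_N})$. Proposition~\ref{lem:first-moment} then yields, in the iterated limit $N\to\infty$ followed by $u\to\infty$, the asymptotic $\E(Y_N;A_u)\sim u\, C_0\int_{D\times(0,\infty)}s\,\texte^{-s^2/(2g)}r^D(x)^2 f_1(x)f_2(s)\,\textd x\,\textd s$, with $C_0:=2\texte^{2c_0/g}/(g\sqrt{2\pi g})$. A brief algebraic check using $\alpha=2/\sqrt g$ and $g=2/\pi$ verifies the identity $\alpha^{-1}C_0=c_\star$; combined with $u/\log(\ffrac1\lambda)\to\alpha^{-1}$ as $\lambda\to 0$ for fixed $K$, division by $\log(\ffrac1\lambda)$ produces precisely the right-hand side of \eqref{e:2.3}.

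The main obstacle is the remainder $\E(Y_N(1-\texte^{-\lambda X_N});A_u)$. Using $1-\texte^{-\lambda X_N}\le\lambda X_N$ and $Y_N\le\|f_1\|_\infty X_N$, this is bounded by $\|f_1\|_\infty\lambda\E(X_N^2;A_u)$, and Proposition~\ref{lem:second-moment} dominates the latter by $C\lambda\texte^{\alpha u+\theta(u)}=C\texte^{\alpha K+\theta(u)}$. Dividing by $\log(\ffrac1\lambda)$ yields a quantity that tends to zero as soon as $\theta$ grows at most logarithmically in $u$---the natural regime for critical multiplicative chaos estimates. A sharper decomposition splitting at $\{X_N\le R\}\cup\{X_N>R\}$ with $R$ optimized (Chebyshev on the tail giving $R^{-1}\E(X_N^2;A_u)$, together with $\lambda X_N^2\le\lambda R X_N$ on the bulk giving $\lambda R\E(X_N;A_u)\le C\lambda R u$) improves this to a bound of order $\texte^{(\alpha K+\theta(u))/2}/\sqrt u$, which suffices even when only $\theta(u)=o(\log u)$ is available. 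Sending $\lambda\to 0$, then $K\to\infty$, then $\delta\to 0$ in that order completes the proof.
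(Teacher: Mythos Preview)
Your overall strategy matches the paper's: truncate on $A_u=\{h^{D_N}\le m_N+u\}$, extract the main term from Proposition~\ref{lem:first-moment}, and control the remainder via Proposition~\ref{lem:second-moment}. The tail bound on $A_u^\cc$ and the identification $\alpha^{-1}C_0=c_\star$ are correct. However, there is a genuine gap in your handling of the remainder $\E\bigl(Y_N(1-\texte^{-\lambda X_N});A_u\bigr)$.

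Proposition~\ref{lem:second-moment} only guarantees $\theta(u)=o(u)$; nothing in its proof (where $\theta(u)=\inf_{0<\epsilon<1}[\epsilon u+\log c_\epsilon]$ with no control on how $c_\epsilon$ blows up as $\epsilon\downarrow0$) gives $\theta(u)=o(\log u)$ or even $\theta(u)\le\log u$. With your parametrization $\lambda=\texte^{-\alpha(u-K)}$, your sharpened bound $\texte^{(\alpha K+\theta(u))/2}/\sqrt u$ therefore need not tend to zero: for instance $\theta(u)=\sqrt u$ is perfectly consistent with Proposition~\ref{lem:second-moment} but makes your bound diverge. So the argument as written does not close.

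The paper circumvents this by parametrizing the two bounds \emph{differently}. For the upper bound it sets $\lambda=u\,\texte^{-\alpha u}$ (close to your choice), but for the lower bound it sets $\lambda=\texte^{-\alpha u-\theta(u)}$, absorbing $\theta$ into the parametrization. This is legitimate because $\theta$ is continuous with $\theta(u)=o(u)$, so $u\mapsto\texte^{-\alpha u-\theta(u)}$ still sweeps out all small $\lambda$, and $\log(1/\lambda)=\alpha u+\theta(u)\sim\alpha u$. With this choice the threshold $\delta/\lambda=\delta\,\texte^{\alpha u+\theta(u)}$ matches the second-moment bound exactly: splitting at $\{X_N>\delta\,\texte^{\alpha u+\theta(u)}\}$ and applying Cauchy--Schwarz together with Markov yields
\[
\sqrt{\E\bigl(X_N^2;A_u\bigr)\cdot\tfrac{1}{\delta}\texte^{-\alpha u-\theta(u)}\,\E\bigl(X_N;A_u\bigr)}
\;\le\;\sqrt{\texte^{\alpha u+\theta(u)}\cdot\tfrac{C}{\delta}\,\texte^{-\alpha u-\theta(u)}\,u}
\;=\;O\bigl(\sqrt{u/\delta}\bigr),
\]
which after division by $\log(1/\lambda)\sim\alpha u$ is $O(u^{-1/2})\to0$, with no residual $\theta$-dependence. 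Replacing your parametrization by this one (and treating upper and lower bounds separately) repairs the argument.
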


\begin{proof}
	We will separately show that the \emph{limes superior}, resp., \emph{limes inferior} as $N\to\infty$ and $\lambda\downarrow0$ of the ratio is bounded from above, resp., below by the integral expression in \eqref{e:2.3}. For the \emph{limes superior} we parametrize the~$\lambda\downarrow0$ limit as the $u\to\infty$ limit of $\lambda := u\rme^{-\alpha u}$. For this choice of~$\lambda$, the expectation in \eqref{e:2.3} is bounded from above by
	\begin{equation}
		\bbE\Bigl(\big\la\zeta^D_N,\,f_1\otimes f_2\big\ra\rme^{-u\rme^{-\alpha u}\la\zeta^D_N,\,\Ind_D\otimes f_2\ra};\,\max_{x\in D_N} h^{D_N}_x>m_N+u\Bigr)
		+ \bbE\Bigl(\big\la\zeta^D_N,\,f_1\otimes f_2\big\ra;h^{D_N}\leq m_N+u\Bigr).
		\label{e:2.4}
	\end{equation} 
	Using that $\max_{t\geq 0} t\rme^{-\lambda t}=(\lambda \rme)^{-1}$  for the above $\lambda$, the first term is at most 
	\begin{equation}
		\|f_1\|_\infty\, u^{-1}\rme^{\alpha u -1}\,\bbP\Bigl(\,\max_{x\in D_N} h^{D_N}_x> m_N+u\Bigr),
	\end{equation}
	which upon division by~$\log(\ffrac1\lambda)=\alpha u+\log(1/u)$ tends to zero in the limits as $N \to \infty$ followed by $u \to \infty$, thanks to~\eqref{e:A1}. By Proposition~\ref{lem:first-moment}, the second term in~\eqref{e:2.4} divided by $\alpha u$ tends to the integral term in~\eqref{e:2.3} under these limits.
	
	For the \emph{limes inferior}, pick~$\delta>0$ and parametrize the $\lambda\downarrow0$ limit as the $u\to\infty$ limit of $\lambda :=\rme^{-\alpha u-\theta(u)}$, for~$\theta$ as in Proposition~\ref{lem:second-moment}. (This exhausts all small values of~$\lambda$ because~$\theta$ is continuous with $\theta(u)=o(u)$.) For this choice of~$\lambda$, the expectation in~\eqref{e:2.3} is at least
	\begin{equation}
	\begin{aligned}
		\bbE\Bigl(\bigl\la\zeta^D_N\,,f_1\otimes f_2\bigr\ra;&h^{D_N}\leq m_N + u\Bigr)\\
		&-\bbE\biggl(\big\la\zeta^D_N\,,f_1\otimes f_2\big\ra\Bigl(1-\exp\Big\{-\rme^{-\alpha u-\theta(u)}\big\la\zeta^D_N\,,\Ind_{D}\otimes f_2\big\ra\Big\}\Bigr);h^{D_N}\leq m_N + u\biggr).
	\end{aligned}
	\end{equation}
	By Proposition~\ref{lem:first-moment}, the first term in the last expression divided by $\log(1/\lambda)=\alpha u+o(u)$ converges to the desired limit as $N\to\infty$ and then $u\to\infty$. The second term (without the minus sign) is at most $\|f_1\|_\infty$ times
	\begin{equation}
	\begin{aligned}
		\label{E:4.8e}
		\bbE \Bigl (\bigl\la\zeta^D_N\,,\Ind_{D\smallsetminus D^\delta}\otimes f_2\bigr\ra;h^{D_N}\leq &m_N+u\Bigr)
		+ (1-\rme^{-\delta})\bbE \Bigl(\big\la\zeta^D_N\,,\Ind_{D^\delta}\otimes f_2\big\ra;h^{D_N}\leq m_N+u\Bigr)
		\\
		&+ \bbE\Bigl(\big\la\zeta^D_N\,,\Ind_{D^\delta}\otimes f_2\big\ra;h^{D_N}\leq m_N+u,\,\big\la\zeta^D_N\,,\,\Ind_{D}\otimes f_2\big\ra>\delta\rme^{\alpha u+\theta(u)}\Bigr).
	\end{aligned}
	\end{equation}
	Proposition~\ref{lem:first-moment} ensures that, upon division by~$u$, the \emph{limes superior} as $N \to \infty$ and $u \to \infty$ of first two terms is at most order $\Leb(D\smallsetminus D^\delta)+\delta$.
	Using the Cauchy-Schwarz and Markov inequalities, the third term is in turn at most the square root of 
	\begin{equation}
		\bbE\left(\big\la\zeta^D_N\,,\Ind_{D^\delta}\otimes f_2\big\ra^2;h^{D_N}\leq m_N+u\right)
		\delta^{-1}\rme^{-\alpha u-\theta(u)} \bbE\Bigl(\big\la\zeta^D_N\,,\Ind_{D}\otimes f_2\big\ra;h^{D_N}\leq m_N+ u\Bigr).
	\end{equation}
	Thanks to our restriction on the support of~$f_2$, Proposition~\ref{lem:second-moment} bounds the  first expectation by a constant times $\rme^{\alpha u+\theta(u)}$ while Proposition~\ref{lem:first-moment} shows that the  second  expectation is at most~$O(u)$. The last expectation in \eqref{E:4.8e} is thus at most $O(\sqrt u)$ which upon dividing by~$u$ vanishes in the limits $N\to\infty$ and $u\to\infty$. 
	
	We conclude that the \emph{limes inferior} of the expectation term in \eqref{e:2.3} is at least the integral term minus a quantity of order $\Leb(D\smallsetminus D^\delta)+\delta$. As $D^\delta\uparrow D$ implies $\Leb(D\smallsetminus D^\delta)\downarrow0$ as~$\delta\downarrow0$, the claim follows.
\end{proof}

\subsection{Truncated first moment}
The remainder of this section is devoted to the proof of the above two key lemmas. When dealing with various Gaussian densities, we will frequently refer to:

\begin{lemma}
	\label{lem:Gaussian}
	Denote $v_N(y):=g\log N + gy$. Then for all $c\ge1$ and all $\ep>0$:
	\settowidth{\leftmargini}{(11)}
	\begin{enumerate}
		\item
		For all $N >  \rme^c$, all $y\in[-c,c]$ and all~$s$ with $\abs{s}\leq(\log N)^{1-\ep}$,
		\begin{equation}
			\label{item:as:Gaussian}
			\exp\left(-\frac{(m_N-s)^2}{2v_N(y)}\right)
			=N^{-2}(\log N)^{3/2}\rme^{\alpha s}
			\exp\left(-\frac{s^2}{2g\log N}+2y+\delta_N(s,y)\right)\,,
		\end{equation}
		where $\delta_N(s,y)\in \bbR$ satisfies
		\begin{equation}
			\lim_{N\to\infty}\,\,\sup_{|s|\leq(\log N)^{1-\ep}}\,\sup_{|y|\le c}\,\bigl|\delta_N(s,y)\bigr|=0\,.
		\end{equation}
		\item
		There exist constants $c_1, c_2<\infty$ such that
		\begin{equation}
			\label{item:ub-r:Gaussian} 
			\begin{aligned}
				\exp\left(-\frac{(m_N+r)^2}{2v_N(y)}\right)
				&\leq c_1N^{-2}v_N(y)^{3/2}\rme^{-\alpha r}
				\exp\left(-\frac{r^2}{2v_N(y)}+\abs{r}\frac{3\alpha\log\log N}{8\log N}\right)\\
				&\leq c_2N^{-2}v_N(y)^{3/2}\rme^{-\alpha r}
			\end{aligned}
		\end{equation}
		for all $N >  \texte^c$, all $y\in[-c,c]$ and all $r\in\bbR$.
	\end{enumerate}
\end{lemma}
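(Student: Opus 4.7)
Both parts follow from direct Taylor expansion of the quadratic form $(m_N\mp r)^2/(2v_N(y))$ around its leading behavior, tracking separately the three contributions from $m_N^2$, from the cross term $m_N r$, and from the pure $r^2$ piece. The main inputs are the exact identity $m_N = g\alpha\log N - \tfrac{3g\alpha}{8}\log\log N$ for $N\ge\texte$ (using $\alpha=2/\sqrt g$) together with the uniform expansion $v_N(y)^{-1} = (g\log N)^{-1}\bigl(1 - y/\log N + O((\log N)^{-2})\bigr)$ valid for $|y|\le c$.

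Squaring $m_N$ and combining with the $v_N(y)^{-1}$ expansion yields
\[
\frac{m_N^2}{2v_N(y)} = 2\log N - \tfrac{3}{2}\log\log N - 2y + O\bigl((\log\log N)^2/\log N\bigr)
\]
uniformly for $|y|\le c$; exponentiating produces $N^{-2}(\log N)^{3/2}\texte^{2y}(1+o(1))$, which is exactly the stated prefactor in~(1). For~(2), $\texte^{2y}$ is bounded by $\texte^{2c}$, and $(\log N)^{3/2}$ and $v_N(y)^{3/2}$ agree up to the bounded factor $g^{-3/2}(1+O(1/\log N))$, both of which fold into $c_1$. The cross term expands as
\[
\frac{m_N}{v_N(y)} = \alpha - \frac{3\alpha\log\log N}{8\log N} - \frac{\alpha y}{\log N} + O\bigl(\log\log N/(\log N)^2\bigr).
\]
In part~(1), where $r=-s$ with $|s|\le(\log N)^{1-\ep}$, every correction past $-\alpha s$ is $o(1)$ uniformly in $|y|\le c$ and is absorbed into $\delta_N(s,y)$. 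In part~(2) one takes absolute values to produce the $|r|\tfrac{3\alpha\log\log N}{8\log N}$ piece, while the residual $O(|r|/\log N)$ contribution (coming mainly from $-\alpha y/\log N$) is absorbed against the quadratic term already present in the bound.

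The quadratic term $r^2/(2v_N(y))$ in part~(1) equals $s^2/(2g\log N) + O((\log N)^{-2\ep})$ under $|s|\le(\log N)^{1-\ep}$, once more absorbable into $\delta_N$; in part~(2) it is retained as $-r^2/(2v_N(y))$. The absorption step just mentioned rests on the elementary supremum identity
\[
\sup_{r\in\bbR}\Bigl(-\frac{r^2}{4v_N(y)}+\frac{C|r|}{\log N}\Bigr)=\frac{C^2 v_N(y)}{(\log N)^2}=O(1/\log N),
\]
uniform in $|y|\le c$, which implies $\exp\bigl(-r^2/(2v_N(y)) + C|r|/\log N\bigr)\le K\exp\bigl(-r^2/(4v_N(y))\bigr)$ with $K$ bounded, permitting the residual $O(|r|/\log N)$ to be soaked up at the cost of an inessential constant factor that folds into $c_1$. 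Applying the same identity to $-r^2/(2v_N(y))+|r|\tfrac{3\alpha\log\log N}{8\log N}$ gives a supremum equal to $\tfrac{9g\alpha^2(\log\log N)^2}{128\log N}(1+o(1))=o(1)$, uniformly bounded in $N\ge\texte^c$, which at once yields the second inequality in~(2) with constant $c_2$.

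The entire argument is Taylor-expansion bookkeeping; the only non-routine step is the completion-of-square absorption of the stray $O(|r|/\log N)$ correction in part~(2) against the quadratic term, as indicated above. Uniformity of all error terms in $|y|\le c$ and in the relevant range of $s$ or $r$ follows directly from the explicit error orders recorded above.
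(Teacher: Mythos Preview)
Your proof is correct and follows the same Taylor-expansion approach as the paper, which also expands $-\tfrac12(m_N+r)^2$ explicitly (its eq.~\eqref{eq:m_kappa-r}) and then combines with $1/v_N(y)=1/(gn)-y/(gn^2)+o(1/n^2)$. One minor point worth flagging: your completion-of-square absorption of the $O(|r|/\log N)$ residual coming from $-\alpha y/\log N$ necessarily weakens the quadratic from $-r^2/(2v_N(y))$ to $-r^2/(4v_N(y))$, so you do not literally obtain the first inequality in~(2) as stated---but the paper's terse sketch (``use that $1/v_N(y)\le 1/(gn-c)$'') glosses over the same issue, and since only the second inequality in~(2) is ever invoked later in the paper, your argument establishes everything that is actually needed.
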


\begin{proof}
	Abbreviate $n:=\log N$. Then (for~$n\ge1$) $m_N=2\sqrt g n-\frac34\sqrt g\log n$ and so
	\begin{equation}
		\label{eq:m_kappa-r}
		-\tfrac{1}{2}(m_N+r)^2
		= -2g n^2 +\tfrac{3}{2}gn\log n-2\sqrt{g}\,rn
		-\tfrac{9}{32}g\log(n)^2+\tfrac{3}{4}\sqrt{g}\,r\log n-\tfrac12r^2.
	\end{equation}
	From $v_N(y)=gn+gy$ we get
	\begin{equation}
		\label{eq:var-Taylor}
		\frac{1}{v_N(y)}
		=\frac{1}{gn}-\frac{y}{gn^2+gny}=\frac{1}{gn}-\frac{1}{gn^2}\left(y+o(1)\right),
	\end{equation}
	where $o(1)\to0$ as $N\to\infty$ uniformly in $y\in[-c,c]$.
	Then~\eqref{item:as:Gaussian} follows by combining~\twoeqref{eq:m_kappa-r}{eq:var-Taylor} and noting that~$|s|\le n^{1-\epsilon}$. For~\eqref{item:ub-r:Gaussian} we bound the second to last term in \eqref{eq:m_kappa-r} by its absolute value and then use that $1/v_N(y)\le1/(gn-c)$. For the last inequality in \eqref{item:ub-r:Gaussian} we also need that the minimum of $x\mapsto Ax^2+Bx$ is, for~$A>0$, equal to $-\frac14 B^2/A$.
\end{proof}

We start by the first moment asymptotics:

\begin{proof}[Proof of Proposition~\ref{lem:first-moment}]
	Let us again use the shorthand $n:=\log N$ whenever convenient.
	We first show \eqref{eq:1m-as} for~$f$ with compact support in $D\times\mathbb R$ as this illustrates the main computation driving the proof.
	For any~$u\ge1$, $x\in D$ and~$r\in\R$, let
	\begin{equation}
		Q_{N,x}(u,r):=\bbP\Bigl(h^{D_N}\leq m_N +u\,\Big|\,h^{D_N}_{\lfloor Nx\rfloor}=m_N-r\Bigr).
	\end{equation}
	The expectation in~\eqref{eq:1m-as} then equals
	\begin{equation}
		\label{eq:first-moment}
		\frac{1}{n}\sum_{x\in D_N}\int_\R
		f(x/N,s)
		\,\rme^{-\alpha s\sqrt{n}}\,Q_{N, x/N }(u,s\sqrt n)\,
		\bbP\biggl(\frac{m_N-h^{D_N}_x}{\sqrt n}\in\rmd s\biggr).
	\end{equation}
	By our restriction to~$f$ with compact support, there is $\delta>0$ such that $\rmd_\infty(x, (D_N)^\cc)>\delta N$ holds for all~$x$ effectively contributing to the sum. Using \eqref{item:as:Gaussian} in conjunction with the Green function asymptotic \eqref{eq:Green}, for the integrating measure in \eqref{eq:first-moment} we get 
	\begin{equation}
		\rme^{-\alpha s\sqrt{n}}\,\bbP\biggl(\frac{m_N-h^{D_N}_x}{\sqrt n}\in\rmd s\biggr)
		=\frac1{\sqrt{2\pi g}}\,N^{-2}n^{3/2}\,r^D(x/N)^2\,\texte^{2c_0/g+o(1)}\,\texte^{-\frac{s^2}{2g}}\textd s,
	\end{equation}
	where~$o(1)\to0$ as~$N\to\infty$ uniformly in $(x,s)\in\supp f$. This
	permits us to bring the $o(1)$ term outside the integral which  turns \eqref{eq:first-moment} into
	\begin{equation}
		\label{eq:bd-Gre-bin}
		\begin{aligned}
		\frac{\texte^{o(1)+2c_0/g}}{\sqrt{2\pi g}}\frac1{N^2}\sum_{x\in D_N}r^D(&x/N)^2 \int_\R \,f(x/N,s)\sqrt n\,Q_{N,x/N}(u,s\sqrt n)\,\texte^{-\frac{s^2}{2g}}\,\textd s
		\\
		&= \frac{\texte^{o(1)+2c_0/g}}{\sqrt{2\pi g}}\int_{D\times\R} r^D(x)^2 \,f_N(x,s)\sqrt n\,Q_{N,x}(u,s\sqrt n)\,\texte^{-\frac{s^2}{2g}}\,\textd x\,\textd s,
		\end{aligned}
	\end{equation}
	where $f_N(x,s):=f(\lfloor Nx\rfloor/N,s)$  and where we noted  that $Q_{N,x}=Q_{N,\lfloor Nx\rfloor/N}$. Henceforth we assume that~$N$ is so large that \eqref{E:2.1} ensures that $f_N(x,s)\ne0$ implies~$x\in D^{\delta/2}$.
	
	Concerning the integrand, the second part of Lemma~\ref{prop:UB} bounds $Q_{N, x}(u,r)$ by
	\begin{equation}
		\label{E:4.20}
		Q_{N,x}(u,r)\le c\,\frac{(1+u)(1+(u+r)^+)}{n}
	\end{equation}
	for all $x \in D^{\delta/2}$, and so $\sqrt n Q_{N,x}(u,s\sqrt n)$ is bounded by a constant times $(1+u)(1+s^+)$ uniformly in $(x,s)\in\supp(f_N)$ and~$n\ge u^2$. For an asymptotic expression, we first note that $Q_{N,x}(u,s\sqrt{n})=0$ whenever $s\sqrt{n} < -u$ for all $x \in D$. On the other hand, if $s \geq 0$, then by Lemma~\ref{prop:asymptotic}, as $N \to \infty$, 
	\begin{equation}
		Q_{N,x}(u,s\sqrt n)=\bigl[2+o(1)\bigr]\,\dfrac{\cL^{x}(u)\cR(u+s\sqrt n)}{gn}.
	\end{equation}
	As $\cR$ is asymptotic to the identity function at large values of its argument, we get 
	\begin{equation}
		\sqrt n\,Q_{N,x}(u,s\sqrt n) \,\,\underset{N\to\infty}\longrightarrow\,\, \frac2g\LL^{x}(u)\,s\Ind_{[0,\infty)}(s) \,,
	\end{equation}
	for all $x \in D$ and $s \in \bbR$.
	
	Since the assumptions on $f$ ensure that $f_N(x,s)\to f(x,s)$, the Dominated Convergence Theorem shows that \eqref{eq:first-moment} converges as $N \to \infty$ to 
	\begin{equation}
		\label{e:4.21}
		\texte^{2c_0/g}\frac2{g\sqrt{2\pi g}}\int_{D\times[0,\infty)} r^D(x)^2 f(x,s) \LL^{x}(u)\,s\texte^{-\frac{s^2}{2g}}\,
		\textd x\,\textd s.
	\end{equation}
	Dividing by $u$, using~\eqref{e:lem:rw-as:fg} (which holds uniformly whenever the integrand is non-zero) and applying the Dominated Convergence Theorem then yield the claim for all~$f$ with compact support.
	
	To include $f$ whose support is of the form $A\times\bbR$ for a compact set $A\subseteq D$,  it remains to consider the expression in~\eqref{eq:first-moment} with the integral therein restricted to $s\in[M,\infty)$.
	With this restriction of the integral, and bounding~$f$ by $\|f\|_\infty \Ind_A$, display~\eqref{eq:first-moment} is at most a constant times
	\begin{equation}
		\int_{A\times[M,\infty)}\,\sqrt n\,Q_{N,x}(u,s\sqrt n)\,\texte^{-\frac{s^2}{2g + c/ n}+s\frac{3\alpha\log n}{8\sqrt{n}}}\,\textd x\,\textd s \,.
	\end{equation}
	Here we used \eqref{item:ub-r:Gaussian} in place of \eqref{item:as:Gaussian}, the constant $c>0$ again arises from the Green function asymptotics~\eqref{eq:Green}, and we bounded $r^D(x)^2$ by a constant using its definition~\eqref{E:3.3} and the fact that $A\subseteq D$ is compact. Bounding also $Q_{N,x}(u,s\sqrt{n})$ by~\eqref{E:4.20} and using that~\eqref{eq:first-moment} equals the expectation in~\eqref{eq:1m-as},  for each~$\ep>0$  we obtain
	\begin{equation}
		\label{e:5.24}
		\frac{1}{1+u}\bbE\Bigl(\zeta^D_N(A\times[M,\infty));\, h^{D_N}\leq m_N + u\Bigr)
		\leq c'\int_{s\in[M,\infty)}\rmd s\,  \rme^{-s^2(\frac{1}{2g}-\ep)}\,,
	\end{equation}
	 once~$N$ is sufficiently large, where~$c'$ is a constant that may depend on~$\ep$.  The right-hand side vanishes in the limit as $N\to\infty$, $u\to\infty$ and $M\to\infty$, taken in this order.
	
	Finally, to include functions with arbitrary support, we need:
	\begin{lemma}
		\label{lemma-tight}
		For each~$D\in\mathfrak D$ there is $c\in(0,\infty)$ such that for all measurable~$A\subseteq D$, all $u\ge1$ and all $N\ge3$,
		\begin{equation}
			\label{eq:first-moment-bd}
			u^{-1}\,\bbE\Bigl(\zeta^D_N(A\times\bbR);\,h^{D_N}\leq m_N+u\Bigr)\leq c \frac1{N^2}\bigl|\{x\in\Z^2\colon x/N\in A\}\bigr|.
		\end{equation}
	\end{lemma}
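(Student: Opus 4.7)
The plan is to reduce the claim to a uniform per-point estimate. Writing
$$\bbE\bigl(\zeta^D_N(A\times\bbR);\,h^{D_N}\le m_N+u\bigr)=\frac{1}{\log N}\sum_{\substack{x\in D_N\\x/N\in A}}\bbE\bigl(\texte^{\alpha(h^{D_N}_x-m_N)};\,h^{D_N}\le m_N+u\bigr),$$
the lemma will follow once we establish that
$$\bbE\bigl(\texte^{\alpha(h^{D_N}_x-m_N)};\,h^{D_N}\le m_N+u\bigr)\;\le\;c(D)\,u\,\log N/N^2$$
holds uniformly in $x\in D_N$, $N\ge 3$ and $u\ge 1$, since summing and dividing by $u\log N$ then produces a bound proportional to $|\{x\in\Z^2\colon x/N\in A\}|/N^2$.

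For interior points $x\in D_N^\delta$, with $\delta>0$ a fixed small constant, the per-point estimate follows by conditioning on $h^{D_N}_x=m_N+v$. The ballot-type bound~\eqref{E:3.17iu} controls the conditional probability $\bbP(h^{D_N}\le m_N+u\mid h^{D_N}_x=m_N+v)$ by $c(\delta)(1+u)(1+(u-v)^+)/\log N$. Splitting the factor $1+(u-v)^+$, the remaining Gaussian integral in $v$ reduces to (i)~the bare moment generating function $\texte^{\alpha^2\sigma_x^2/2-\alpha m_N}$, with $\sigma_x^2:=G^{D_N}(x,x)$, bounded by $cN^{-2}(\log N)^{3/2}$ via Lemma~\ref{lem:Gaussian}(2) using $\sigma_x^2\le g\log N+c$; and (ii)~a weighted correction $\texte^{\alpha^2\sigma_x^2/2-\alpha m_N}\cdot\bbE[(u-V)^+]$ with $V\sim\NN(\alpha\sigma_x^2-m_N,\sigma_x^2)$, where $\bbE[(u-V)^+]\le c\sqrt{\log N}$ by a direct Gaussian tail computation. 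Combining with the prefactor $(1+u)/\log N$ gives a per-point bound $c(\delta)(1+u)\log N/N^2$, which is at most $c'u\log N/N^2$ for $u\ge 1$, as required.

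The main obstacle is the boundary layer $D_N\setminus D_N^\delta$, where~\eqref{E:3.17iu} does not apply directly. Here one must refine the argument, e.g.\ via a dyadic decomposition in $d(x):=d_\infty(x,(D_N)^\cc)$: for $x$ with $d(x)\in(2^k,2^{k+1}]$, the lattice ball $B_x:=B(x,2^{k-1})\cap\Z^2\subseteq D_N$ has $x$ in its bulk, so a Gibbs--Markov decomposition (Lemma~\ref{lem:Gibbs-Markov}) writes $h^{D_N}=h^{B_x}+\varphi^{D_N,B_x}$ with $h^{B_x}$ independent of the harmonic binding field, whose marginal at $x$ has variance $G^{D_N}(x,x)-G^{B_x}(x,x)=O(1)$; one would like to apply a local version of~\eqref{E:3.17iu} to $h^{B_x}$ at scale $2^k$. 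The delicate point, which I expect to be the main source of technical difficulty, is that the global constraint $h^{D_N}\le m_N+u$ does not translate directly into a sufficiently tight local max constraint on $h^{B_x}$, since $\varphi^{D_N,B_x}$ on $\partial B_x$ is a centred Gaussian of standard deviation of order $\sqrt{\log(N/2^k)}$, and the pointwise bound $h^{B_x}\le m_N+u-\varphi^{D_N,B_x}$ typically yields a local ceiling that exceeds the local centering by a wide margin. Extracting the $(1+u)/\log N$ ballot saving at the smaller scale therefore requires a joint tail estimate on $h^{B_x}$ and $\varphi^{D_N,B_x}$ capturing the fact that configurations contributing to the event $\{h^{D_N}\le m_N+u\}$ force $\min_{\partial B_x}\varphi^{D_N,B_x}$ to be near its extremal value; summing the resulting per-point bounds over the dyadic layers then recovers the claim.
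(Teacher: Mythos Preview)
Your reduction to a per-point estimate and the treatment of $x\in D_N^\delta$ are fine, but the discussion of the boundary layer misdiagnoses the difficulty and, as written, does not close. You assert that the binding field $\varphi^{D_N,B_x}$ has fluctuations of order $\sqrt{\log(N/2^k)}$ on $\partial B_x$ and that this makes the local ceiling too loose. This is where the argument goes astray: what matters is not the field on $\partial B_x$ but its harmonic extension on the \emph{inner} half-ball $B_x'$, and there the variance is $O(1)$ uniformly in $x$ and~$N$. Indeed, for $z\in B_x'$ one has $\Var(\varphi^{D_N,B_x}_z)=G^{D_N}(z,z)-G^{B_x}(z,z)$, and Lemma~\ref{lemma-3.2a} (which is precisely why the discretization~\eqref{E:2.1} is imposed) gives both terms as $g\log d_\infty(z,(D_N)^\cc)+O(1)$, with the two distances comparable. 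So the imagined obstacle --- a binding field that is too wild to recover the ballot saving --- does not exist, and no ``joint tail estimate capturing that $\min_{\partial B_x}\varphi$ is near its extremal value'' is needed.

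The paper's argument exploits this directly and treats all $x\in D_N$ uniformly, with no interior/boundary split. With $B$ the ball of radius $d_\infty(x,(D_N)^\cc)$ and $B'$ its inner half, one relaxes the global constraint to $\{h^{D_N}\le m_N+u\text{ on }B'\}$ and decomposes according to $k\le\max_{B'}|\varphi^{D_N,B}|<k+1$, which by the $O(1)$-variance observation and Borell--TIS has probability $\le\texte^{-\tilde c k^2}$. On this event the constraint becomes $h^B\le m_N+u+k+1$ on $B'$. A Girsanov shift together with the FKG inequality (Lemma~\ref{lemma-FKG}) then upgrades the partial constraint on $B'$ to the full constraint on $B$ at the cost of a uniform multiplicative constant, after which the bulk calculation you already outlined applies at scale $L=d_\infty(x,(D_N)^\cc)$ and yields the bound $c(u+k)(\log N)/N^2$. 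Summing $\texte^{\alpha k-\tilde c k^2}(u+k)$ over $k\ge0$ gives $O(u)$, and no dyadic summation over layers is required.
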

	
	\noindent
	Postponing the proof for a moment, \eqref{eq:first-moment-bd} shows that restricting the support of~$f$ to $D^\delta$ with the help of a bounded mollifier that differs from one or zero only   in  $D^\delta\smallsetminus D^{2\delta}$, the expectation in \eqref{eq:1m-as} changes by at most a  constant times  $u N^{-2} |D_N\smallsetminus D_N^{2\delta}|$. The integral in \eqref{eq:1m-as} also changes by a  constant times  $\leb(D\smallsetminus D^{2\delta})$.
	 The discretization~\eqref{E:2.1} ensures that the elements of $x\in D_N$ can be placed in disjoint unit open squares that are all  contained in~$ND$. Hence, 
	\begin{equation}
		N^{-2}|D_N\smallsetminus D_N^\delta|\le \leb(D\smallsetminus D^{2\delta}).
	\end{equation}
	 Thus, using also that $\leb(D\smallsetminus D^{2\delta})\to0$ as~$\delta\downarrow0$, the errors in the  above  approximation tend to zero  and the statement holds for functions with arbitrary support. 
\end{proof}

It remains to give: 

\begin{proof}[Proof of Lemma~\ref{lemma-tight}]
	Fix~$x\in D_N$ and let
	$B:=\{z\in D_N:\: \rmd_\infty(x,z)<\rmd_\infty(x, (D_N)^\cc)\}$,
	$B':=\{z\in D_N:\: \rmd_\infty(x,z)<\tfrac12 \rmd_\infty(x, (D_N)^\cc)\}$.
	Using the Gibbs-Markov property (Lemma~\ref{lem:Gibbs-Markov}) to write $h^{D_N}=h^B+\varphi^{D_N,B}$ we then get
	\begin{equation}\label{E:4.26}
		\begin{aligned}
			\bbE\Bigl(\rme^{\alpha (h^{D_N}_x-m_N)} ;\,&h^{D_N}\leq m_N+u\Bigr)
			\le \bbE\Bigl(\rme^{\alpha (h^{D_N}_x-m_N)}  ;\,h^{D_N}\leq m_N+u\text{ in }B'\Bigr)
			\\
			&\leq\sum^\infty_{k= 1}\bbP\Bigl(\,\max_{z\in B'}\bigl|\varphi^{D_N,B}_z\bigr|\geq k -1  \Bigr)
			\,\rme^{\alpha(k+1)}\bbE\Bigl(\rme^{\alpha(h^{B}_x-m_N)}  ;\,h^B\leq m_N+u+k\text{ in }B'\Bigr).
		\end{aligned}
	\end{equation}
	 Set $L:=\rmd_\infty(x,(D_N)^\cc)$. Since~$x$ is at the center of~$B$, the calculation leading to~\eqref{e:5.24} applies (using Lemma~\ref{prop:UB} with $D:=B(0,1)$, $N:=L+1$ and $\delta:=\tfrac12$). This bounds the expectation on  the right-hand side by a constant times
	\begin{equation}
		(m_N-m_L+u+k)\frac{1\vee\log L}{L^2}\texte^{\alpha m_L - \alpha m_N} \,,
	\end{equation}
	uniformly in~$x\in D_N$ and~$u\ge1$.
	Using that (for $N\ge3$) 
	\begin{equation}
		\frac{1\vee\log L}{L^2}\texte^{\alpha m_L}
		=\frac{L^2}{N^2}\sqrt{\frac{\log N}{1\vee\log L}}\frac{\log N}{N^2}\texte^{\alpha m_N}
	\end{equation}
	and that $m_N-m_L=O(1)+O(\log(N/L))$ and $L=O(N)$ uniformly in~$x\in D_N$, we conclude
	\begin{equation}
		\label{E:4.31}
		\bbE\Bigl(\rme^{\alpha(h^{B}_x-m_N)} ;\,h^B\leq m_N+u+k\text{ in }B'\Bigr)
		\le c(u+k)\frac{\log N}{N^2}
	\end{equation}
	for some constant~$c\in(0,\infty)$, uniformly in~$x\in D_N$,~$k\ge0$ and~$u\ge1$.
	
	Thanks to Lemma~\ref{lemma-3.2a}, $\max_{z\in B'}\Var(\varphi^{D_N,B}(z))$ is bounded uniformly in (the centering point)~$x\in D_N$ and~$N\ge1$. A standard argument based on the Fernique estimate along with Borell-TIS inequality then shows that, for some~$\tilde c>0$,
	\begin{equation}
		\bbP\Bigl(\,\max_{z\in B'}\bigl|\varphi^{D_N,B}_z\bigr|\geq k  -1 \Bigr)\le\texte^{-\tilde c k^2},\quad k\ge  1. 
	\end{equation}
	Plugging this, along with \eqref{E:4.31}, into \eqref{E:4.26}, the sum over~$k\ge  1$ may be performed resulting in an expression bounded by a constant times $u(\log N)N^{-2}$, uniformly in~$u\ge1$ and $x\in D_N$. The claim follows by summing over~$x\in D_N$ with $x/N\in A$ and invoking the definition of~$\zeta^D_N$.
\end{proof}
We remark that it is exactly Lemma~\ref{lemma-tight} that forces us to restrict attention to discretization via \eqref{E:2.1}. This is in turn caused by our reliance on Lemma~\ref{lemma-3.2a} which only applies when the ``maximal'' discretization \eqref{E:2.1} is used.

\subsection{Truncated second moment}
\label{sec:proof:second-moment}\noindent
Our next task is the proof of the upper bound in Proposition~\ref{lem:second-moment} on the truncated second moment of measures~$\zeta^D_N$. Given an integer $N\ge1$, distinct vertices $x,y\in D_N$ and a real number~$u\ge1$, let $f_{u}^{N;x,y}\colon\R^2\to[0,\infty)$ denote the density in
\begin{equation}
	\label{E:4.34}
	\bbP\Bigl(h^{D_N}\leq m_N +u,\,\,h^{D_N}_x- m_N\in \textd s,\, h^{D_N}_y-m_N\in\textd t\Bigr) = f_{u}^{N;x,y}(s,t)\textd s\,\textd t.
\end{equation}
In light of the explicit form of the law of $(h_x^{D_N},h_y^{D_N})$, we may and will assume that $f_{u}^{N;x,y}$ is left continuous in each variable which fixes this function uniquely.

Abbreviate~$M:=\delta^{-1}$. Then, thanks to the restriction on the maximum of~$h^{D_N}$,
\begin{equation}
\begin{aligned}
	\label{eq:second-int}
	\bbE\Bigl(\bigl(\zeta^D_N(&D^\delta\times(-\infty, M])\bigr)^2;\,h^{D_N}\leq m_N+u\Bigr)
	\\
	&=\frac{1}{(\log N)^2}\sum_{x\in D^\delta_N}
	\int_{[-M\sqrt{\log N},u]}
	\,\rme^{2\alpha s }\,\bbP\bigl(h^{D_N}_x- m_N\in\rmd s, h^{D_N}\leq m_N +u\bigr)
	\\
	&\qquad\qquad\qquad+\frac{1}{(\log N)^2}\sum_{\begin{subarray}{c}
			x,y\in D^\delta_N\\x\ne y
	\end{subarray}}
	\int_{[-M\sqrt{\log N},u]^2}
	\,\rme^{\alpha (s+t)}\,f_{u}^{N;x,y}(s,t)\,\textd s\, \textd t.
	\quad
\end{aligned}
\end{equation}
To control the second integral  we will need a good estimate on $f_{u}^{N;x,y}(s,t)$:

\begin{lemma}[2-density estimate]
	\label{lem:2-density}
	Let $\ell_N(x,y)$ be the largest integer for which the $\ell^\infty$-balls $B(x,\rme^\ell+1)$ and $B(y,\rme^\ell+1)$ are disjoint. 
	For each $\delta,\ep>0$ there exist $C,p\ge1$ such that, with $\ell:=\ell_N(x,y)$ and $n:=\log N$,
	\begin{equation}
		\rme^{\alpha(s+t)}\,f_{u}^{N;x,y}(s,t)\le CN^{-2}\rme^{-2\ell}\left(\frac{n}{\ell(n-\ell)}\right)^{3/2}
		(1+s^-)(1+t^-)\rme^{(\alpha+\epsilon)u}
	\end{equation}
	holds for all $N\ge 3$, all $x,y\in D^\delta_N$ with $1\leq \ell_N(x,y)\leq n - p$, all $s,t\in\bbR$ and all $u\geq 1$.
\end{lemma}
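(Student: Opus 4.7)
The approach is to write $f_u^{N;x,y}(s,t) = \rho_{N;x,y}(s,t) \cdot Q_{N;x,y}(s,t,u)$ where $\rho_{N;x,y}$ is the joint density of $(h^{D_N}_x - m_N, h^{D_N}_y - m_N)$ and
\[
Q_{N;x,y}(s,t,u) := \bbP\bigl(h^{D_N} \leq m_N + u \,\big|\, h^{D_N}_x = m_N + s,\, h^{D_N}_y = m_N + t\bigr),
\]
and to estimate the two factors separately. The first is a bivariate Gaussian to be handled by explicit computation using Green-function asymptotics, the second is a ballot-type probability to be bounded via Gibbs--Markov reduction to Lemma~\ref{prop:UB}.

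For $\rho_{N;x,y}$, Lemmas~\ref{lemma-3.2} and~\ref{lemma-3.2a} give, for $x,y\in D_N^\delta$ with $\ell := \ell_N(x,y)$, covariance-matrix entries $\Sigma_{xx},\Sigma_{yy} = gn + O(1)$ and $\Sigma_{xy} = g(n-\ell) + O(1)$, so $\det\Sigma = g^2\ell(2n-\ell)(1+o(1))$. Diagonalizing the quadratic form in the $(s+t,s-t)$ basis and completing the square after the shift by $e^{\alpha(s+t)}$, using $\alpha^2=4/g$ and $m_N = 2\sqrt g\,n - \tfrac34\sqrt g\log n + O(1)$, one obtains a bound on $e^{\alpha(s+t)}\rho_{N;x,y}(s,t)$ of order $N^{-2}e^{-\ell}\,n^{3/2}/\sqrt{\ell(2n-\ell)}$ times two Gaussian factors in the shifted variables $s+t$ and $s-t$ (peaked at scales $\sqrt{2n-\ell}$ and $\sqrt{\ell}$ respectively). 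This accounts for the $N^{-2}$, half of the $e^{-2\ell}$, and part of the $(n/[\ell(n-\ell)])^{3/2}$ in the target.

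For $Q_{N;x,y}$, use a Gibbs--Markov decomposition at the intermediate scale~$\ell$. Set $B_x := B(x,e^\ell+1)$, $B_y := B(y,e^\ell+1)$ and $U := B_x\cup B_y$; by the very definition of $\ell$ the balls are disjoint. By Lemma~\ref{lem:Gibbs-Markov}, $h^{D_N} = \varphi + h^U$ where $\varphi := \varphi^{D_N,U}$ is discrete harmonic on $U$ and $h^U|_{B_x}, h^U|_{B_y}$ are independent DGFFs on the two balls, independent of $\varphi$. Conditioning on $h^{D_N}_x = m_N+s$, $h^{D_N}_y = m_N+t$ is equivalent to pinning $h^U_x + \varphi_x = m_N+s$ and $h^U_y + \varphi_y = m_N+t$, and the event $\{h^{D_N}\leq m_N+u\}$ splits into an outer event $\{\varphi\leq m_N+u\text{ on }D_N\setminus U\}$ together with two inner ballot events $\{h^{B_x}+\varphi\leq m_N+u\text{ on }B_x\}$ and its analogue for $B_y$. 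Lemma~\ref{prop:UB}\eqref{E:3.17iu} applied inside each small ball produces the polynomial factors $(1+s^-)$ and $(1+t^-)$ along with polynomial-in-$u$ corrections; applying~\eqref{E:3.16iu} to the outer region, after integrating against the Gaussian law of $\varphi|_{\partial U}$ (whose covariance is controlled by the Green function at scale $n-\ell$), contributes the missing $e^{-\ell}\sqrt{n}/(n-\ell)^{3/2}$-type factor. The polynomial-in-$u$ corrections are absorbed into $e^{\epsilon u}$ via $(1+u)^k \leq C_\epsilon e^{\epsilon u}$, producing the exponent $(\alpha+\epsilon)u$ in the statement.

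The main obstacle is the clean interlocking of the three nested ballot estimates in the presence of random harmonic boundary data: the outer ballot bound must be averaged against the Gaussian law of $\varphi|_{\partial U}$ whose maximal fluctuations on $\partial U$ must be controlled (via Fernique/Borell--TIS and the equicontinuity from Lemma~\ref{lemma-3.3}) so that the inner-ball pinning conditions translate into usable boundary data for the outer estimate; the three contributions must then combine to reproduce the precise $e^{-2\ell}(n/[\ell(n-\ell)])^{3/2}$ scaling with constants uniform in $(s,t,u)$. The hypothesis $1\leq \ell\leq n-p$ guarantees that both the inner scale (of size $e^\ell$) and the outer scale (logarithmic size $n-\ell$) are large enough for the respective ballot estimates to be nondegenerate, with $p$ chosen so that the implied constants are absolute.
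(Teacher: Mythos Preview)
Your overall architecture --- Gibbs--Markov at the separation scale~$\ell$, ballot estimates on the two inner balls and on the outer annulus, then integration over the binding field --- is exactly the paper's. But your organization and your bookkeeping both deviate in ways that matter.

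First, the density computation is wrong as stated. After diagonalizing and completing the square, the prefactor of $\rme^{\alpha(s+t)}\rho_{N;x,y}(s,t)$ (i.e., the value at the peak of the shifted Gaussian) is
\[
\frac{1}{2\pi\sqrt{\lambda_+\lambda_-}}\,\rme^{\alpha^2\lambda_+ - 2\alpha m_N}
\;=\;\frac{c\,n^3\,\rme^{-4\ell}}{\sqrt{\ell(2n-\ell)}},
\]
since $\alpha^2\lambda_+ = 8n-4\ell$ and $2\alpha m_N = 8n - 3\log n$; this is \emph{not} of order $N^{-2}\rme^{-\ell}n^{3/2}/\sqrt{\ell(2n-\ell)}$ except near $\ell\approx 2n/3$. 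Your heuristic that the density contributes ``$N^{-2}$ and half of $\rme^{-2\ell}$'' while the ballot contributes the other half does not reflect how the factors actually assemble; in the paper's accounting the $N^{-2}$ comes from the density of the binding-field value $\wt\varphi^N_x$, the $\rme^{-4\ell}$ from the two inner-ball densities, and an $\rme^{+2\ell}$ cancellation from the same binding-field density.

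Second, and more structurally: by factoring $f_u = \rho\cdot Q$ first, you are forced to average the outer ballot estimate over the law of $\varphi|_{\partial U}$ \emph{conditional on} the two pinned values $h^{D_N}_x,h^{D_N}_y$. This conditional law has a large, spatially varying mean (of order $m_N$), and its oscillation must be controlled conditionally. Plain Fernique/Borell--TIS, which you invoke, is the unconditional statement; what is actually needed is the conditional oscillation bound of Lemma~\ref{lem:sep-osc} (Proposition~B.5 of~\cite{Ballot}), giving Gaussian tails for $R_N(x,y)$ given $\wt\varphi^N_x = m_N - m_L + w$. The paper sidesteps this complication by conditioning on $h^{D_N}|_{\partial_\infty W}$ \emph{first} and integrating over its \emph{unconditional} law; the five factors \eqref{e:5.47a}--\eqref{e:5.47e} then combine directly into \eqref{e:5.56}, and the $w$-integral in \eqref{eq:f_{u}^{N;x,y}st-fin} produces the $\rme^{(\alpha+\epsilon)u}$ cleanly. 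Your route can be made to work --- the two computations are related by Bayes --- but it is strictly more laborious and your sketch does not supply the missing pieces.
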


Before we delve into the proof, let us show how this implies:

\begin{proof}[Proof of Proposition~\ref{lem:second-moment}]
	Fix $\delta,\ep>0$ and let $p$ be as in Lemma~\ref{lem:2-density}.
	Abbreviate $n:=\log N$ and $M:=\delta^{-1}$.
	We start by bounding the short-range part of the double sum in \eqref{eq:second-int} using that $\texte^{\alpha(s+t)}\le\texte^{\alpha s+\alpha u}$ on the integration domain. This~gives
	\begin{equation}
	\begin{aligned}
		\label{E:4.37}
		\frac{1}{(\log N)^2}\sum_{x\in D^\delta_N}
		&\int_{[-M\sqrt{\log N},u]}
		\,\rme^{2\alpha s }\,\bbP\bigl(h^{D_N}_x- m_N\in\rmd s, h^{D_N}\leq m_N +u\bigr)
		\\
		&+
		\frac{1}{(\log N)^2}\sum_{\substack{x,y\in D^\delta_N\\ \ell_N(x,y)< 1\\ x\ne y}}
		\int_{[-M\sqrt{\log N},u]^2}
		\rme^{\alpha (s+t)}\,f_{u}^{N;x,y}(s,t)\,\textd s\,\textd t
		\qquad\qquad
		\\
		&\qquad\qquad\qquad\le\frac1{n^2}\sum_{\substack{x,y\in D^\delta_N\\ \ell_N(x,y)< 1}}\texte^{\alpha u}\,
		\int
		\rme^{\alpha s}\,\bbP\bigl(h^{D_N}_x- m_N\in\rmd s, h^{D_N}\leq m_N +u\bigr),
	\end{aligned}
	\end{equation}
	 where we also used definition~\eqref{E:4.34} of $f^{N;x,y}_u(s,t)$ and integrated over $t$. 
	Since $\ell_N(x,y)< 1$ implies $y\in B(x, 2\texte^{1} +2)$, using $|B(x,r)\cap D^\delta_N|\le 4r^2$ we can estimate the right-hand side of \eqref{E:4.37} by
	\begin{equation}
		\label{E:4.38}
		\bbE\left(\zeta^D_N(D^\delta\times\R);h^{D_N}\leq m_N+u\right)
		\frac{ 64 \, \rme^{2+\alpha u}}{n}.
	\end{equation}
	Lemma~\ref{lemma-tight} ensures the expectation is bounded uniformly in~$N\ge1$. Taking $N\to\infty$, \eqref{E:4.38} tends to zero.
	
	We will handle the remaining portion of the sum in \eqref{eq:second-int} by a covering argument. For this let us consider a general measurable set $A\subseteq D^\delta$ with $\diam(A)<\delta\rme^{-p}$ and denote $A_N:=\{x\in D_N\colon x/N\in A\}$.
	Then, for~$\delta$ sufficiently small, the definition of $\ell_N(x,y)$ ensures $\ell_N(x,y)\leq n-p$ for all $x,y\in A_N$. Lemma~\ref{lem:2-density} then gives, for $N\geq N_0$,
	\begin{equation}
	\begin{aligned}
		\label{E:4.39a}
		&\frac{1}{(\log N)^2}\sum_{\begin{subarray}{c}
				x,y\in A_N\\ \ell_N(x,y)\ge 1
		\end{subarray}}
		\int_{[-M\sqrt{\log N},u]^2}
		\,\rme^{\alpha (s+t)}\,f_{u}^{N;x,y}(s,t)\,\textd s\, \textd t
		\\
		&\qquad\le
		C\frac1{N^2(\log N)^2}\sum_{\ell=1}^{n-p}
		\sum_{\substack{x,y\in A_N\\\ell_N(x,y)=\ell}}
		\rme^{-2\ell}
		\left(\frac{n}{\ell(n-\ell)}\right)^{3/2}
		\int_{[-M\sqrt{\log N},u]^2}
		(1+s^-)(1+t^-)\,\rme^{(\alpha+\epsilon)u}\rmd s\,\rmd t\,,
	\end{aligned}
	\end{equation}
	 where the constant $C>0$ depends on $\delta$, $\ep$ and $D$. For $N$ with $M\sqrt{\log N}\ge u$, the integral on the right-hand side  is at most order~$M^4(\log N)^2\rme^{(\alpha+\epsilon)u}$.  By this bound, and as  for each~$x\in D_N$ there are at most a constant times $\texte^{2\ell}$ vertices~$y$ with~$\ell_N(x,y)=\ell$, the expression  on the right-hand side of \eqref{E:4.39a} is  at most a constant times
	\begin{equation}
		\label{E:4.40a}
		M^4\rme^{(\alpha+\epsilon)u}\frac1{N^2}|A_N|\sum_{\ell=1}^{n-p}\left(\frac{n}{\ell(n-\ell)}\right)^{3/2}
		\le 2^{5/2}\,M^4 \rme^{(\alpha+\epsilon)u}\frac1{N^2}|A_N|\sum_{\ell\ge 1}\ell^{-3/2}.
		\qquad
	\end{equation}
	All the above bounds are uniform in the choice of the set~$A$.  
	
We  now  cover~$D^\delta$ by $K=O(\delta^{-2}\texte^{2p})$ of $\ell^\infty$-balls $A^{(1)},\dots,A^{(K)}$ of radius~$\delta\texte^{-p}$ (recall that $p$ and hence $K$ depend on $\delta$, $\ep$ and $D$). Using  
	\begin{equation}
		\bbE\Bigl(\bigl(\zeta^D_N(D^\delta\times(-\infty, M])\bigr)^2;\,h^{D_N}\leq m_N+u\Bigr)
		\le K\sum_{i=1}^K\bbE\Bigl(\bigl(\zeta^D_N(A^{(i)}\times(-\infty, M])\bigr)^2;\,h^{D_N}\leq m_N+u\Bigr)
		\quad
	\end{equation}
	and noting that the right-hand side of \eqref{E:4.40a} is additive in~$|A_N|$ we conclude
	\begin{equation}
		\limsup_{N\to\infty}\,\bbE\Bigl(\bigl(\zeta^D_N(D^\delta\times(-\infty, M])\bigr)^2;\,h^{D_N}\leq m_N+u\Bigr)
		\le c_\epsilon \rme^{(\alpha+\epsilon)u}
	\end{equation}
	for a constant~$c_\epsilon\in[1,\infty)$ depending on~$\ep$, $\delta$ and $D$.  Hence  we get~\eqref{eq:2m} with $\theta(u):=\inf_{0<\epsilon<1}[\epsilon u+\log c_\epsilon]$.  As this holds for all~$\epsilon\in(0,1)$, taking~$\epsilon\downarrow0$ along with~$u\to\infty$ yields~$\theta(u)=o(u)$ as~$u\to\infty$. The continuity follows from  the bounds $\theta(u)\le\theta(u+r)\le\theta(u)+r$ for all~$u,r\ge0$.
\end{proof}

The proof of Lemma~\ref{lem:2-density} constitutes the remainder of this subsection. Given~$x,y\in D_N$, let us abbreviate $\ell=\ell_N(x,y)$, set $L:= \lfloor \rme^\ell\rfloor$ and define
\begin{equation}
	\label{E:5.42}
		B_1 := B(x,L)  \cap \bbZ^2   \ , \qquad B_2 := B(y,L)  \cap \bbZ^2  \ , \qquad 
		W := B_1 \cup B_2 \ , \qquad B_3 := D_N \setminus (W\cup \partial_\infty W),
\end{equation}
where $\partial_\infty W$ denotes the set of all vertices whose $\ell^\infty$-distance to $W$ is $1$. Observe that $\partial W$ is a subset of $\partial B_3$ which is the set of all vertices whose $\ell^\infty$-distance to $x$ or $y$ is $L$. 
Also, pick $\ep'\in(0,1/3)$, let $W':=\{z\in D_N^\delta \colon \rmd_\infty(z,\partial_\infty W)>\ep'L \}$ and finally set
\begin{equation}
	\label{E:5.43}
	B_1' := B_1 \cap W' \ ,\quad B_2' := B_2 \cap W' \ ,\quad  B'_3 := B_3 \cap W' \,.
\end{equation}
 These definitions are also illustrated in Fig.~\ref{f:sec-moment}.

\begin{figure}
	\centering
	\includegraphics[width=0.55\textwidth]{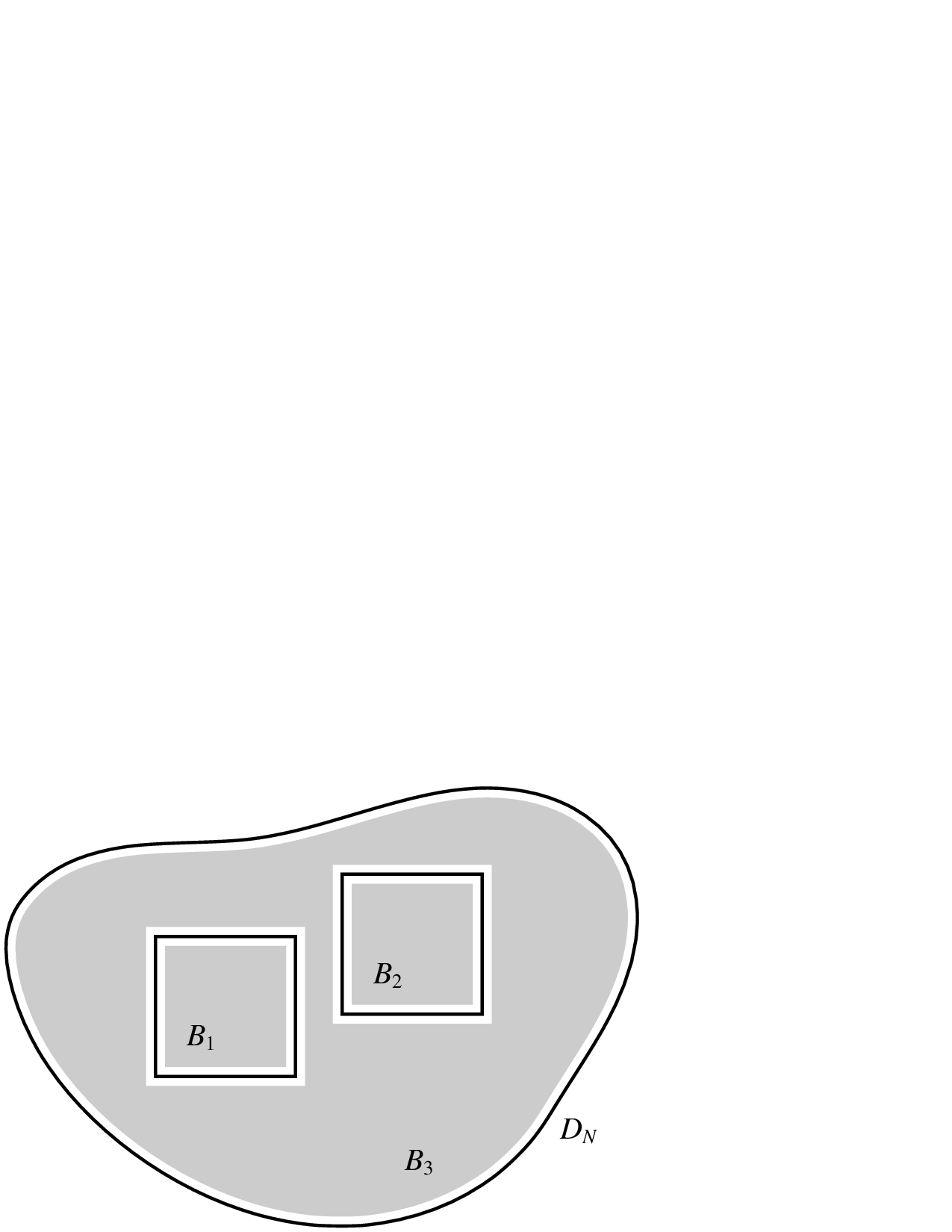}
	\caption{An illustration of the sets defined in \eqref{E:5.42} and \eqref{E:5.43}. The square boxes~$B_1$ and~$B_2$ are centered at vertices~$x$ and~$y$, respectively. The shaded area marks the set~$W'$ underlying the definitions of~$B_1'$, $B_2'$ and~$B_3'$.}
	\label{f:sec-moment}
\end{figure}

To bound $f_u^{N; x,y}(s,t)$, we will first condition on the values of $h^{D_N}$ on $\partial_\infty W $ and then use the Gibbs-Markov property, Lemma~\ref{prop:UB} and standard Gaussian estimates to bound this conditional density in terms of the unique bounded harmonic  extension~$\wt\varphi^N$  of $h^{D_N}|_{\partial_\infty W}$ onto all of $\bbZ^2$. Note that $\wt \varphi^N = \varphi^{D_N,D_N\smallsetminus \partial_\infty W}$ on $W\cup \partial_\infty W$. 

For our purposes, sufficient control over $\wt \varphi^N$  will be  achieved by using its value  
at $x$, together with its oscillation in~$W'$:
\begin{equation}
	R_N(x,y) :=\max_{z,z'\in W'}
	\bigl|\wt \varphi^N_z-\wt \varphi^N_{z'}\bigr|.
\end{equation}
The oscillation can be controlled  via:

\begin{lemma}
	\label{lem:sep-osc}
	Let $\delta,\ep'\in(0,1/3)$. There exists $C>0$ such that for all $N\ge1$, all $x,y\in D^\delta_N$, all $r\ge0$, and all $w\in\bbR$,
	\begin{equation}
		\label{E:4.39}
		\bbP\Bigl(R_N(x,y)\ge r\,\Big|\, \varphi_x^{D_N,D_N\setminus\partial_\infty W}=m_N-m_{\texte^{\ell}}+w\Bigr)
		\leq  C \exp\Bigl\{- C^{-1}[(r- C\tfrac{|w|}{n-\ell})^+]^2\Bigr\}\,,
		\quad
	\end{equation}
	where $\ell:=\ell_N(x,y)$.
\end{lemma}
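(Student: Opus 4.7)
The plan is to apply the Borell--TIS inequality to the conditional Gaussian law of $\tilde\varphi^N|_{W'}$ given $\tilde\varphi^N_x = v := m_N - m_{\rme^\ell} + w$. First, on $W\cup\partial_\infty W$ the field~$\tilde\varphi^N$ coincides with the Gibbs--Markov binding field $\varphi^{D_N,D_N\setminus\partial_\infty W}$, and since a simple random walk starting in~$B_i$ first hits~$\partial_\infty W$ through $\partial B_i$, the restriction~$\tilde\varphi^N|_{B_i}$ is almost surely the Poisson (harmonic) extension of $h^{D_N}|_{\partial B_i}$ into~$B_i$. Consequently, for $y,y'\in B_i$,
\begin{equation*}
\Cov(\tilde\varphi^N_y,\tilde\varphi^N_{y'}) = G^{D_N}(y,y') - G^{B_i}(y,y'),
\end{equation*}
while $\Cov(\tilde\varphi^N_y,\tilde\varphi^N_{y'}) = G^{D_N}(y,y')$ for $y\in B_1$ and $y'\in B_2$. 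Writing $n := \log N$ and $\sigma^2 := \Var(\tilde\varphi^N_x)$, Lemma~\ref{lemma-3.2a} and the potential-kernel asymptotic~\eqref{E:3.7} yield $\sigma^2 = g(n-\ell)+O(1)$.

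The principal technical ingredient is the uniform covariance estimate $|\sigma^2 - \sigma_{x,z}| \leq C$ for $z\in W'$, where $\sigma_{x,z} := \Cov(\tilde\varphi^N_x,\tilde\varphi^N_z)$, together with the canonical-metric bound $d(z,z') := \sqrt{\Var(\tilde\varphi^N_z-\tilde\varphi^N_{z'})} \leq C\min(|z-z'|/L,1)$. For $z\in B_2'$, both~$\sigma^2$ and $\sigma_{x,z} = G^{D_N}(x,z)$ equal $g(n-\ell)+O(1)$ directly from~\eqref{E:3.7}, as $\|x-z\|_2 \asymp \rme^\ell$ while $x,z\in D_N^\delta$. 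For $z\in B_1'$, the Poisson representation of $\tilde\varphi^N|_{B_1}$ gives
\begin{equation*}
\sigma^2 - \sigma_{x,z} = \sum_{w\in\partial B_1} H^{B_1}(x,w)\bigl[G^{D_N}(w,x) - G^{D_N}(w,z)\bigr];
\end{equation*}
splitting each $G^{D_N}(w,\cdot)$ into its logarithmic piece and its bounded-harmonic remainder, the log piece varies by $O(1)$ across $\{x,z\}$ at scale~$L$, while the bounded-harmonic part has gradient $O(1/N)$ by the standard derivative estimate applied to a boundary datum $y\mapsto\log\|y-w\|_2$ of $O(1)$ oscillation on~$\partial D_N$. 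The metric bound follows analogously with $\Delta H := H^{B_i}(z,\cdot)-H^{B_i}(z',\cdot)$ replacing $H^{B_i}(x,\cdot)$: the zero-sum property $\sum_w\Delta H(w)=0$ annihilates the smooth part of~$G^{D_N}$, leaving a logarithmic (``Coulomb'') energy of a charge of total variation $O(|z-z'|/L)$ on a circle of radius~$L$, which scales as $(|z-z'|/L)^2$.

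With these estimates in hand, the endgame is standard. The metric bound makes $(W',d)$ have $d$-diameter $O(1)$ and $\ep$-covering number $O(\ep^{-2})$, so Dudley's entropy inequality gives $\bbE\bigl[\sup_{z\in W'}|\tilde\varphi^N_z-\tilde\varphi^N_x|\bigr] \leq C$ uniformly in~$N$, $x$, $y$. Conditionally on $\tilde\varphi^N_x = v$, the process $\{\tilde\varphi^N_z-\tilde\varphi^N_x\}_{z\in W'}$ is Gaussian with conditional mean $((\sigma_{x,z}-\sigma^2)/\sigma^2)\,v$, bounded uniformly in~$z$ by $C(1+|w|/(n-\ell))$ via the covariance estimate and the trivial inequality $|v|\leq C(n-\ell)+|w|$; the conditional increments are dominated by their unconditional counterparts, so Dudley continues to bound the conditional fluctuation about the conditional mean by a constant. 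Borell--TIS applied to this re-centered conditional Gaussian field then yields
\begin{equation*}
\bbP\bigl(R_N(x,y)\geq r \,\big|\, \tilde\varphi^N_x = v\bigr) \leq C\exp\bigl(-c\bigl(r - C'(1+|w|/(n-\ell))\bigr)^2\bigr)
\end{equation*}
for $r$ exceeding the conditional mean; the trivial bound $\leq 1$ for smaller~$r$, together with a suitable adjustment of the constant, brings the estimate into the form~\eqref{E:4.39}.

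The main obstacle is the uniform $O(1)$ covariance-difference estimate, as a naive Cauchy--Schwarz bound using only $\Var(\tilde\varphi^N_z-\tilde\varphi^N_x)=O(1)$ would produce only $|w|/\sqrt{n-\ell}$ in the exponent---insufficient to match~\eqref{E:4.39}. Obtaining the sharper $|w|/(n-\ell)$ dependence requires carefully cancelling the log-singular pieces of $G^{D_N}$ and $G^{B_i}$ that are common at the scale~$L$ of~$B_i$, and likewise the quadratic-in-$|z-z'|/L$ metric bound exploits the vanishing total mass of~$\Delta H$.
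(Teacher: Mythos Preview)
Your strategy---Borell--TIS combined with Dudley's entropy bound for the conditional Gaussian field---is the right one, and is in essence what the cited \cite[Proposition~B.5]{Ballot} carries out (the paper's own proof is merely that citation). There is, however, a genuine gap: the supremum defining $R_N(x,y)$ runs over all of $W'=\{z\in D_N^\delta:\rmd_\infty(z,\partial_\infty W)>\ep'L\}$, which decomposes as $B_1'\cup B_2'\cup B_3'$, and your argument addresses only the inner pieces $B_1'$ and $B_2'$. On the outer piece $B_3'\subseteq D_N\setminus(W\cup\partial_\infty W)$ the field $\tilde\varphi^N$ is the \emph{exterior} bounded harmonic extension of $h^{D_N}|_{\partial_\infty W}$, and your Poisson-kernel formulas based on $H^{B_i}(z,\cdot)$ do not apply there. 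Both the covariance estimate $|\sigma^2-\sigma_{x,z}|\le C$ and the metric bound $d(z,z')\le C\min(|z-z'|/L,1)$ need separate justification for $z,z'\in B_3'$, and your covering-number claim $N(\ep)=O(\ep^{-2})$ is not obvious once $B_3'$---a set of cardinality of order $N^2$ rather than $L^2$---is included.

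The exterior case can be handled with the same circle of ideas but is not automatic. For the covariance difference one needs that the exterior harmonic measure from any $z\in B_3'$, integrated against $G^{D_N}(\cdot,w)$ for $w\in\partial B_1$, still yields $g(n-\ell)+O(1)$; this uses that $G^{D_N}$ oscillates by $O(1)$ over $\partial_\infty W$ when both arguments stay well inside $D_N$. For the entropy bound one must exploit that the exterior extension contracts: at distance $R\gg L$ from $W$ the exterior harmonic measures from nearby points differ in total variation by $O(|z-z'|/R)$, so the $d$-diameter of the dyadic annulus at scale $2^kL$ is $O(2^{-k})$ and the covering numbers remain summable across scales. Without supplying these estimates your argument establishes the bound for $\max_{z\in B_1'\cup B_2'}|\tilde\varphi^N_z-\tilde\varphi^N_x|$ only, not for $R_N$ itself.
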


\begin{proof}
	This is a reformulation of~\cite[Proposition B.5]{Ballot}. We take  $-x/N+D$  as~$U$ and $B(0,1)\cup B((y-x)/(L-1), 1)$ as~$W$ and use the discretization in~\cite{Ballot} for the inner domain.
\end{proof}

With this in hand, we are ready to give:

\begin{proof}[Proof of Lemma \ref{lem:2-density}]
	We may assume~$p$ to be so large such that $\ell:=\ell_N(x,y)$ obeys  $n-\ell\geq 1+\log \max\{c,\delta^{-1}\}$, where $c$ is the constant from Lemma~\ref{prop:UB} .
	Then,  in particular,  $x,y\in D^\delta_N$ implies
	$W \subseteq D^{\delta/2}_N$.  Given $v \in \bbR^{\partial_\infty W}$, let $f_u^{N; x,y}(s,t|v)$ be the  function   defined exactly as $f_u^{N; x,y}(s,t)$ in~\eqref{E:4.34} except that the probability is conditioned on  $\{{h^{D_N}}|_{\partial_\infty W} = m_N - m_{L} + v\}$.  Using the spatial Markov property of $h^{D_N}$, the measure $f_u^{N; x,y}(s,t|v) \rmd s \rmd t$ is now dominated by  the product of two conditional measures 
	\begin{gather}
		\label{e:5.47a}
		\bbP\Bigl(h^{B_1}_x - m_N \in\rmd s \,\Big|\, {h^{B_1}}|_{\partial B_1} = m_N - m_{L} + v \Bigr) \\ 
		\label{e:5.47b}
		\bbP\Bigl(h^{B_2}_y -  m_N\in \rmd t \,\Big|\, {h^{B_2}}|_{\partial   B_2  } = m_N - m_{L} + v \Bigr)
	\end{gather}
	and three conditional probabilities 
	\begin{gather}
		\label{e:5.47c}
		\bbP\Bigl(h^{B_3}\leq m_N + u \text{ on } B'_3\,\Big|\, {h^{B_3}}|_{\partial D_N} = 0 ,\,  {h^{B_3}}|_{\partial_\infty W} = m_N - m_{L} + v \Bigr)\\
		\label{e:5.47d}
		\bbP\Bigl( h^{B_1}\leq m_N + u \text{ on } B'_1\,\Big|\, {h^{B_1}}|_{\partial B_1} = m_N - m_{L} + v,\, h^{B_1}_x = m_N +s\Bigr)\\
		\label{e:5.47e}
		\bbP\Bigl( h^{B_2}\leq m_N + u \text{ on } B'_2\,\Big|\, {h^{B_2}}|_{\partial B_2} = m_N - m_{L} + v,\, h^{B_2}_y = m_N +t\Bigr) \,.
	\end{gather}
	We will now bound each of these terms separately to give an estimate on~$f_u^{N; x,y}(s,t|v)$ which can then be integrated into the desired bound.
	
	Let us write $\ol{v}$ for  the unique bounded harmonic extension of~$v$ from $\partial_\infty W$ to~$\bbZ^2$.  Abbreviate
	\begin{equation}
		w := \ol{v}_x\quad\text{and}\quad r := \max_{ z, z' \in W'} |\ol{v}_z -  \ol{v}_{z'}|.
	\end{equation}
	The  Gibbs-Markov decomposition then  dominates~\eqref{e:5.47a} by 
	\begin{equation}
		\BbbP\Bigl(h^{B_1}_x - m_{L} + \ol{v}_x \in \rmd s \Bigr) 
		\leq c \ell \rme^{-2\ell+\alpha(-s+w)} \rmd s\,,
	\end{equation}
	where we have used Lemma~\ref{lemma-3.2a} and~\eqref{item:ub-r:Gaussian} to bound the Gaussian density. Similarly, the second probability is at most
	\begin{equation}
		\BbbP\Bigl(h^{B_{ 2}}_x - m_{L} + \ol{v}_y \in \rmd t \Bigr)  \le  
		c \ell \rme^{-2\ell+\alpha(-t+\ol{v}_y)} \rmd t 
		\leq c \ell \rme^{-2\ell+\alpha(-t+w + r)} \rmd t,
	\end{equation}
	where the second inequality is based on $\ol{v}_y\le \ol{v}_x+r$. 
	
	For the probabilities  in \twoeqref{e:5.47c}{e:5.47e}  we use the upper  bounds  in Lemma~\ref{prop:UB}. By shifting $h^{B_3}$ by $-m_N - u$, the boundary conditions in~\eqref{e:5.47c} become
	$-m_N-u$ on $\partial D_N$ and $-m_{L} + v - u$ on $\partial_\infty W$, while the event whose probability we are after is now $\{{h^{B_3}}|_{B_3'} \leq 0\}$.
	As $N/L$  is sufficiently large  by our assumption on~$p$,  the first part of  Lemma~\ref{prop:UB}  with
	\begin{equation}
		-x/N+D\,,\quad B(0,1) \cup B((y-x)/ (L-1) ,1)\,,\quad N\quad\text{and}\quad L-1
	\end{equation}
	in place of
	$D$,~$\wt{D}$,~$N$  and  $K$,  respectively, 
	then dominates~\eqref{e:5.47c} by 
	\begin{equation}
		\begin{aligned}
		c \frac{(1+u)\bigl( ( w -r  - u )^- + \rme^{-(( w - 3r  - u)^+)^{3/2}}\bigr)}{n-\ell}
		&\leq 
		c' \frac{(1+u)\bigl( (w - u - r)^- + \rme^{-\rho(w - u -  3 r)^+}\bigr)}{n-\ell}
		\\& \leq
		c'' \frac{(1+u)
			(1+r) \rme^{ 3 \rho r} \bigl((w-u)^- + \rme^{-\rho(w - u)^+}\bigr)}{n-\ell} \,,
	\end{aligned}
	\end{equation}
	where we set $\rho := \alpha +2\ep$ and then choose $c', c'' < \infty$ accordingly. 
	
	Similarly, subtracting $m_N + u$ from  $h^{B_1}$ in~\eqref{e:5.47d} and  applying Lemma~\ref{prop:UB} with $D : =B(0,1)$ and $N:=L+1$ bounds the probability in~\eqref{e:5.47d} by
	\begin{equation}
		c \frac{\big(1+( w-3r-u)^-\big)\big(1+(s-u)^-\big)}{\ell}
		\leq 
		c' \frac{\big(1+u)^2(1+r)(1+w^-)(1+s^-)}{\ell} \,.
	\end{equation}
	An analogous bound  with~$s$ replaced by~$t$  applies to~\eqref{e:5.47e}. Combining  these observations,  $f_u^{N; x,y}(s,t|v)$ is at most a constant times
	\begin{equation}
		\label{e:5.56}
		\frac{(1+u)^5(1+s^-)(1+t^-)}{n-\ell}
		\rme^{-4\ell-\alpha (s+t)} 
		\\\times\bigg(
		\rme^{2\alpha w +  5\rho r}
		\bigl((w-u)^- + \rme^{-\rho(w - u)^+}\bigr)
		(1+r)^3(1+w^-)^2
		\bigg).
	\end{equation}
	Recalling the definition of $w$ and $r$, the integral with respect to $\bbP({h^{D_N}}|_{\partial_\infty W} - m_N + m_{L} \in \rmd v)$  of the expression inside the large brackets above is at most
	\begin{equation}
	\begin{aligned}
		\label{E:5.58a}
		\int_{w \in \bbR} \sum_{r \geq 1} \bbP\Bigl(\wt\varphi^N_x - m_N + m_{L} \in \rmd w)
		\bbP\Bigl(R_N(x,y)\ge r-1\,\Big|\,\wt\varphi^N_x - m_N + m_{L}= w\Bigr) \\
		\times\rme^{2\alpha w +  5 \rho r}
		\bigl((w-u)^- + \rme^{-\rho(w - u)^+}\bigr)
		(1+r)^3(1+w^-)^2 \,.
	\end{aligned}
	\end{equation}
	For the first probability  we first note that, by  Lemmas~\ref{lemma-3.2a} and~\ref{lem:Gibbs-Markov}, 
	\begin{equation}
		\label{e:varphi}
		\Var(\wt\varphi^N_x)
		 = \Var(h^{D_N}_x) - \Var(h^{B_1}_x)
		\le g\log (N/L) + O(1) \,.
	\end{equation}
	Abbreviating $m_{N,L}:= m_N - m_{N/L} - m_{L}$, Lemma~\ref{lem:Gaussian} shows 
	\begin{equation}
	\label{E:5.58b}
		\begin{aligned}
			\bbP\bigl(\wt\varphi^N_x- m_N + m_L\in \rmd w\bigr)
			= \bbP\bigl(\wt\varphi^N_x -m_{N/L} - m_{N,L} \in \rmd w\bigr)
			\le c N^{-2}\texte^{2\ell-\alpha (w+m_{N,L})}\log(N/L)\,\rmd w\\
			= c N^{-2}(\log N / \log L)^{3/2} (\log(N/L))^{-1/2} \texte^{2\ell-\alpha w } \rmd w,
		\end{aligned}
	\end{equation}
	where we used that $m_{N,L}=\tfrac34\sqrt{g}\bigl(-\log \log N + \log^+\log( N/ L) +\log^+\log^+ L\bigr)$ once $N,L\ge\texte$.  The second probability in \eqref{E:5.58a} is handled by Lemma~\ref{lem:sep-osc}.
	
 We now bound the integral $f^{N;x,y}_u(s,t)$ of $f_{u}^{N;x,y}(s,t|v)$ with respect to $\bbP({h^{D_N}}|_{\partial_\infty W} - m_N + m_{L} \in \rmd v)$ by combining~\eqref{e:5.56} with~\eqref{E:5.58a}, \eqref{E:5.58b} and Lemma~\ref{lem:sep-osc}  to get 
	\begin{equation}
		\begin{aligned}
			 \label{eq:f_{u}^{N;x,y}st-fin}
			&\frac{f_{u}^{N;x,y}(s,t)}
			{N^{-2} \rme^{-2\ell-\alpha (s+t)} (1+u)^5(1+s^-)(1+t^-)
				\big(\frac{n}{\ell(n-\ell)}\big)^{3/2}} \\
			& \quad \leq  c'
			\int \rmd w\; \rme^{\alpha w}
			\Big((w-u)^- +\rme^{-\rho(w-u)^+} \Big) (1+|w|)^2
			\sum_{ r=1}^\infty \rme^{-  6\rho\bigl( r- C\tfrac{|w|}{n-\ell}\bigr)}\rme^{  5\rho r} (1 + r)^3\\
			& \quad \leq c'' \int \rmd w\;\rme^{\alpha w  +\ep\abs{w}}
			\Big((w-u)^- + \rme^{-\rho(w-u)^+}\Big) \,,
		\end{aligned}
	\end{equation}
	for some constants $c',c''\in(0,\infty)$,  with  $C$ as in Lemma~\ref{lem:sep-osc} and  assuming that~$p$ is so large that  also  $6C(\alpha + 2\ep)/(n-\ell)<\ep/2$.  
	Splitting the integration domain into $w\in(u,\infty)$, $w\in[0,u]$, and $w\in(-\infty,0)$, the integral is at most a constant times $\rme^{ (\alpha + 2\epsilon) u}$. The claim  follows with $3 \epsilon$ replacing $\epsilon$ in the desired upper bound.
\end{proof}

We note that  the proof of Lemma~\ref{lem:2-density}  also relied on estimates from Lemma~\ref{lemma-3.2a} and is thus constrained to the  particular  discretization \eqref{E:2.1}.

\section{Proof of Theorem~\ref{thm-A}}
\label{sec5}\noindent
In this section we will give a formal proof of Theorem~\ref{thm-A}. For easier exposition, and also since this is the main novelty in the present work, we will first prove the convergence of the near-extremal process~$\zeta^D_N$ alone and deal with the joint convergence with the extremal process and the field itself later.

\subsection{Near-extremal process convergence}
Thanks to  Lemmas~\ref{lemma-tight} and~\ref{lemma-DGFF-tail}, the processes $\{\zeta^D_N\colon N\ge1\}$ are tight in the space of Radon measures on~$\overline D\times\overline\R$. (This uses that $\zeta^D_N$ is supported on~$D\times\R$ for each~$N\ge1$.) 
The strategy of the proof is similar to that used in \cite{BL1,BL2,BL3} for the extremal process and~\cite{BL4} for the intermediate DGFF level sets: We extract a subsequential limit~$\zeta^D$ of these processes and identify a properties that determine its law uniquely.

Our first item of concern is the dependence of the subsequential limit on the underlying domain~$D$. This is the content~of:

\begin{proposition}
	\label{lem:sim-lim}
	Every sequence of~$N$'s tending to infinity contains a subsequence $\{N_k\}_{k\ge1}$ and, for all~$D\in\mathfrak D$, there is a random measure~$\zeta^D$ such that
	\begin{equation}
		\label{e:3.1} 
		\zeta^D_{N_k} \,\,\,\underset{k \to \infty} \Lawarrow \,\,\,\zeta^D,\quad D\in\mathfrak D.
	\end{equation}
	The same applies (with the same subsequence and the limit measure) even if~$\zeta^D_N$ is defined using a sequence of  lattice  domains~$\{D_N\}_{N\ge1}$ satisfying \twoeqref{E:2.1a}{E:2.2a}.
\end{proposition}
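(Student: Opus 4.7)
The plan is to combine the tightness already established with a diagonal extraction over a countable sub-family of $\mathfrak D$ and then extend to all domains using the Gibbs-Markov structure, following a pattern analogous to the arguments in~\cite{BL2,BL4}.

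Lemmas~\ref{lem:tightness} and~\ref{lemma-DGFF-tail} provide tightness of $\{\zeta^D_N\}_{N\ge1}$ in the vague topology on $\overline D\times\overline\R$ for each fixed $D\in\mathfrak D$. I would fix a countable family $\mathfrak D_0\subseteq\mathfrak D$ --- for instance, finite unions of open squares with rational vertices --- such that every $D\in\mathfrak D$ can be exhausted from within by $\wt D_j\in\mathfrak D_0$ with $\wt D_j\subseteq D$ and $\leb(D\smallsetminus\wt D_j)\to 0$. A standard diagonal extraction then yields a single subsequence $\{N_k\}_{k\ge1}$ along which $\zeta^{D'}_{N_k}\Lawarrow\zeta^{D'}$ simultaneously for every $D'\in\mathfrak D_0$.

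For general $D\in\mathfrak D$, tightness ensures that any subsequence of $\{N_k\}$ has a further subsequence along which $\zeta^D_{N_k}$ converges in law; the task reduces to showing that the subsequential limit does not depend on the particular choice of further subsequence. For this I would invoke the Gibbs-Markov decomposition (Lemma~\ref{lem:Gibbs-Markov}) $h^{D_N}|_{\wt D_{j,N}}=h^{\wt D_{j,N}}+\varphi^{D_N,\wt D_{j,N}}$ together with the coupling $\varphi^{D_N,\wt D_{j,N}}(\lfloor N\cdot\rfloor)\to\Phi^{D,\wt D_j}$ uniformly on compacts of $\wt D_j$ (Lemma~\ref{lemma-3.5}) and the fact that $\varphi^{D_N,\wt D_{j,N}}/\sqrt{\log N}\to 0$ in sup-norm on these same compacts. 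Combined with the already-established convergence of $\zeta^{\wt D_j}_{N_k}$, these ingredients should give, for any continuous $f$ with compact support in $\wt D_j\times\R$,
\begin{equation*}
\la\zeta^D_{N_k},f\ra\,\,\Lawarrow\,\,\bigl\la\rme^{\alpha\Phi^{D,\wt D_j}(\cdot)}\zeta^{\wt D_j}(\rmd x,\rmd s),f(x,s)\bigr\ra,
\end{equation*}
where $\Phi^{D,\wt D_j}$ on the right is independent of $\zeta^{\wt D_j}$. Sending $\wt D_j\uparrow D$ and using Lemma~\ref{lemma-tight} to kill the mass of $\zeta^D_N$ near $\partial D$ then pins down the subsequential limit uniquely and defines $\zeta^D$.

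The main technical obstacle I anticipate is the displayed coupling: one has to pass from the DGFF sum defining $\zeta^D_N$ to a product of the spatial multiplier $\rme^{\alpha\varphi^{D_N,\wt D_{j,N}}(\cdot)}$ with $\zeta^{\wt D_j}_N$, absorb the vanishing shift $-\varphi^{D_N,\wt D_{j,N}}/\sqrt{\log N}$ in the vertical variable, and then send $N\to\infty$ using Lemma~\ref{lemma-3.5}. The equicontinuity of $\varphi^{D_N,\wt D_{j,N}}$ on compacts (via Lemma~\ref{lemma-3.3}) handles the spatial multiplier, while the tightness bound \eqref{eq:first-moment-bd} controls stray mass near $\partial\wt D_j$. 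For the more permissive discretizations \twoeqref{E:2.1a}{E:2.2a}, the same argument applies on test functions compactly supported in $D$: the absence of Lemma~\ref{lemma-tight} in that generality only obstructs the control of $\zeta^D_N$ near $\partial D$, which is precisely why the resulting convergence is claimed on $D$ rather than $\overline D$; the same subsequence $\{N_k\}$ and limit $\zeta^D$ can be used because the coupling and the interior arguments above are insensitive to the choice of discretization.
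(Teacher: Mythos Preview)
Your proposal is correct and follows essentially the same route as the paper: diagonal extraction over a countable family $\mathfrak D_0$ of domains built from dyadic/rational rectangles, followed by the Gibbs-Markov coupling (Lemmas~\ref{lem:Gibbs-Markov} and~\ref{lemma-3.5}) to pin down the law of any subsequential limit $\zeta^D$ for general $D\in\mathfrak D$ in terms of the already-determined $\zeta^{\wt D_j}$ and an independent $\Phi^{D,\wt D_j}$. The paper packages the coupling step as Lemma~\ref{lem:GibbsMarkov} and then cites it, while you rederive it inline; you are also more explicit than the paper about the second clause (independence of the limit from the discretization) and about using Lemma~\ref{lemma-tight}/Lemma~\ref{lem:stoc_cont} to pass from $C_\cc(D\times\R)$ to $C(\overline D\times\overline\R)$. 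One minor remark: there is no need to ``send $\wt D_j\uparrow D$'' in a limit---for each fixed $f\in C_\cc(D\times\R)$ a single $\wt D_j\supseteq\supp f$ already determines the law of $\langle\zeta^D,f\rangle$ (and, by taking $\wt D_j$ large enough, the joint law of finitely many such integrals), which is all that is needed.
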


The proof is based on two lemmas that will be of independent interest.
We start by recording a consequence of the first moment estimates from Section~\ref{sec4}:

\begin{lemma}[Stochastic absolute continuity]
	\label{lem:stoc_cont}
	Let $D\in\mathfrak D$ and let~$\zeta^D$ be a subsequential weak limit of the measures $\{\zeta^D_N\colon N\ge1\}$. Then for all measurable  $A \subseteq \overline D$, 
	\begin{equation}
		\label{E:5.2}
		\leb(A)= 0\quad\Rightarrow\quad\zeta^D(A\times\overline\R) = 0\text{\rm\ \ a.s.}
	\end{equation}
	The measure~$\zeta^D$ is concentrated on~$D\times[0,\infty)$ with $\zeta^D(D\times[0,\infty))<\infty$ a.s.
\end{lemma}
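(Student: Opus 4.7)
The plan is to split the statement into three parts: (i) the stochastic absolute continuity \eqref{E:5.2}; (ii) the concentration of $\zeta^D$ on $D\times[0,\infty)$; and (iii) the finiteness of the total mass. The main inputs are the first-moment bound of Lemma~\ref{lemma-tight}, the upper-tail bound on the maximum from Lemma~\ref{lemma-DGFF-tail}, and the uniform-in-$N$ estimate~\eqref{e:5.24} from inside the proof of Proposition~\ref{lem:first-moment}.

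For (i), I would let $A\subseteq\overline D$ be measurable with $\leb(A)=0$, use outer regularity of Lebesgue measure applied to the open thickenings $A+B(0,1/n)$ to pick an open $U\supseteq A$ with $\leb(\overline U)<\epsilon$, and then invoke Urysohn's lemma to produce $\phi\in C(\overline D;[0,1])$ with $\phi\ge \Ind_A$ and $\supp\phi\subseteq\overline U$. Since $\mu\mapsto \min(\langle \mu,\phi\otimes 1\rangle,1)$ is bounded and continuous in the vague topology on Radon measures over $\overline D\times\overline{\bbR}$ (a compact space), the subsequential convergence $\zeta^D_{N_k}\Lawarrow\zeta^D$ yields
\begin{equation*}
\bbE\min\bigl(\zeta^D(A\times\overline{\bbR}),1\bigr)\le\bbE\min\bigl(\langle\zeta^D,\phi\otimes 1\rangle,1\bigr)=\lim_k\bbE\min\bigl(\langle\zeta^D_{N_k},\phi\otimes 1\rangle,1\bigr),
\end{equation*}
which is at most $\limsup_k\bbE\min(\zeta^D_{N_k}(\overline U\times\overline{\bbR}),1)$. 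For any $u\ge1$ this in turn is bounded by $\bbP(\max h^{D_{N_k}}>m_{N_k}+u)+\bbE(\zeta^D_{N_k}(\overline U\times\bbR);\,h^{D_{N_k}}\le m_{N_k}+u)$, which by Lemmas~\ref{lemma-DGFF-tail} and~\ref{lemma-tight} is at most a constant times $u\rme^{-\alpha u}+u\,\epsilon$ once $N_k$ is so large that $N_k^{-2}|\{x\in\bbZ^2\colon x/N_k\in\overline U\}|\le 2\leb(\overline U)$. Sending $\epsilon\downarrow 0$ and then $u\to\infty$ forces $\zeta^D(A\times\overline{\bbR})=0$ a.s.

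For (ii), concentration on $D$ follows from (i) applied to $A=\partial D$, which has zero Lebesgue measure for $D\in\mathfrak D$. To exclude mass on $\overline D\times[-\infty,0)$, I would fix $\epsilon>0$ and use that on $\{\max h^{D_N}\le m_N+u\}$ every atom of $\zeta^D_N$ has second coordinate at least $-u/\sqrt{\log N}\ge -\epsilon$ for $N$ large; hence for any continuous $f\colon\overline D\times\overline{\bbR}\to[0,1]$ supported in $\overline D\times[-\infty,-\epsilon]$, $\bbP(\langle\zeta^D_N,f\rangle>0)\le\bbP(\max h^{D_N}>m_N+u)\le cu\rme^{-\alpha u}$. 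Passing to the subsequential limit, sending $u\to\infty$, and varying $f$ through a countable dense family yields $\zeta^D(\overline D\times[-\infty,-\epsilon])=0$ a.s., and $\epsilon\downarrow0$ along a sequence completes this step. To exclude mass at $+\infty$, I would recycle~\eqref{e:5.24}: for any $\eta>0$ one has $\limsup_N\bbE(\zeta^D_N(\overline D\times[M,\infty));\,h^{D_N}\le m_N+u)\to 0$ as $M\to\infty$, and Markov's inequality together with the tail bound on the maximum and vague convergence (using that the indicator of the closed set $\overline D\times[M,+\infty]$ is a decreasing limit of bounded continuous functions) gives $\zeta^D(\overline D\times\{+\infty\})=0$ a.s.

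Finally, (iii) is essentially automatic: since $\overline D\times\overline{\bbR}$ is compact, the total-mass functional $\mu\mapsto\mu(\overline D\times\overline{\bbR})$ is continuous in the vague topology, so $\zeta^D(\overline D\times\overline{\bbR})$ is the distributional limit along $\{N_k\}$ of $\zeta^D_{N_k}(D\times\bbR)$; the latter is tight by Lemma~\ref{lem:tightness}, so the limit is a.s.\ finite and, combined with (ii), gives $\zeta^D(D\times[0,\infty))<\infty$ a.s. The fiddliest step I expect will be excluding mass at $\{+\infty\}$, since it is the only one that requires the sharper Gaussian decay from the proof of Proposition~\ref{lem:first-moment} and demands a uniform-in-$N$ argument that large values of the second coordinate contribute negligible mass.
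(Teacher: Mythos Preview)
Your proof follows essentially the same route as the paper's: continuous approximation combined with Lemma~\ref{lemma-tight} and Lemma~\ref{lemma-DGFF-tail} for part~(i), the tail bound on the maximum for the $[-\infty,0)$ exclusion, the estimate~\eqref{e:5.24} for the $\{+\infty\}$ exclusion, and Lemma~\ref{lem:tightness} for finiteness of the total mass.

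There is, however, one step in part~(i) that fails as written. For a general Lebesgue-null $A\subseteq\overline D$ you cannot always choose an open $U\supseteq A$ with $\leb(\overline U)<\epsilon$: if, say, $A=\Q^2\cap[0,1]^2\subseteq D$, then any open $U\supseteq A$ is dense in $[0,1]^2$ and hence $\overline U\supseteq[0,1]^2$. The same obstruction invalidates the follow-up claim that $N_k^{-2}|\{x\in\Z^2\colon x/N_k\in\overline U\}|\le 2\leb(\overline U)$ for large~$k$. (The paper's own phrasing is subject to the same wrinkle: continuous $f_j$ with $1_A\le f_j\le 1$ are forced to equal~$1$ on~$\overline A$, so $f_j\downarrow 1_A$ a.e.\ likewise needs $\leb(\overline A)=0$.) A clean repair is: by outer regularity pick open $U\supseteq A$ with $\leb(U)<\epsilon$, approximate $1_U$ from \emph{below} by continuous $0\le\phi_n\uparrow 1_U$, pass through weak convergence with the bounded continuous functional $\mu\mapsto\min(\langle\mu,\phi_n\otimes 1\rangle,1)$, and use that the Riemann sums $N^{-2}\sum_x\phi_n(x/N)$ converge to $\int\phi_n\le\leb(U)<\epsilon$; monotone convergence in~$n$ then bounds $\bbE\min(\zeta^D(U\times\overline\R),1)$ and hence $\bbE\min(\zeta^D(A\times\overline\R),1)$. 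With this fix your argument is complete.
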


\begin{proof}
	Let~$A \subseteq \overline D$ be measurable  with~$\Leb(A)=0$.  The outer regularity of the Lebesgue measure implies the existence of functions $\{f_j\colon j\ge1\}\subseteq C_\cc(\R^2)$  with support restricted to an open ball containing~$\overline D$  such that $\Ind_A\le f_j$ and $\Vert f_j\Vert\le1$ hold for all~$j\ge1$ and~$f_j\downarrow \Ind_A$ Lebesgue-a.e.\ as~$j\to\infty$. Given~$\epsilon>0$, we can thus estimate
	\begin{equation}
		\label{E:5.3oi}
		\BbbP\bigl(\zeta^D(A\times\overline\R)\ge\epsilon\bigr)\le\inf_{j\ge1}\BbbP\bigl(\langle\zeta^D,f_{j}  \otimes \Ind_{\overline\R}  \rangle\ge\epsilon\bigr)
		\le\inf_{j\ge1}\,\limsup_{k\to\infty}\BbbP\bigl(\langle\zeta^D_{N_k},f_{j}  \otimes \Ind_{\R}  \rangle\ge\epsilon\bigr),
	\end{equation}
	where~$\{N_k\}_{k\ge1}$ is a subsequence such that~$\zeta_{N_k}^D\Lawarrow\zeta^D$.
	
	Pick~$u>0$.  Thanks to the Markov inequality, the probability on the right hand side of~\eqref{E:5.3oi} is at most
	\begin{equation}
		\label{eq:La-unif-Markov}
		\epsilon^{-1}
		\bbE\Bigl(\langle\zeta^D_{N},f_{j}  \otimes \Ind_{\R}  \rangle\,;\,h^{D_N}\leq m_N+u\Bigr)
		+\bbP\Bigl(\,\max_{x\in D_N} h^{D_N}_x> m_N+u\Bigr)
	\end{equation}
	with~$N:=N_k$.  Lemma~\ref{lemma-tight} along with a routine approximation   argument   give 
	\begin{equation}
		\limsup_{N\to\infty}\bbE\Bigl(\langle\zeta^D_{N},f_{j} \otimes \Ind_{\R} \rangle\,;\,h^{D_N}\leq m_N+u\Bigr)
		\le c u\int_D f_j(x)\textd x
	\end{equation}
	and so, by the  Bounded Convergence Theorem, the expectation in \eqref{eq:La-unif-Markov} tends to zero as~$N\to\infty$ followed by~$j\to\infty$. Lemma~\ref{lemma-DGFF-tail} in turn shows that the probability on the right of \eqref{eq:La-unif-Markov} vanishes as~$N\to\infty$ followed by~$u\to\infty$ and so we~get~\eqref{E:5.2}.
	
	The same argument applied to~$A:=\partial D$ shows that~$\zeta^D$ puts no mass on~$\partial D\times\overline\R$ a.s. The bound~\eqref{e:5.24} shows that~$\zeta^D$ does not charge $D\times\{+\infty\}$. By Lemma~\ref{lemma-DGFF-tail}, the maximum of~$h^{D_N}$ is at most $m_N+o(\sqrt{\log N})$ and so $\zeta^D(D\times[-\infty,0))=0$ a.s. The a.s.-finiteness of $\zeta^D(D\times[0,\infty)  )  $ then follows from Lemma~\ref{lem:tightness}.
\end{proof} 

Next we will state the consequence Gibbs-Markov property: 

\begin{lemma}[Gibbs-Markov property]
	\label{lem:GibbsMarkov}
	Let $D,\wt{D} \in\mathfrak D$ obey $\wt{D} \subseteq D$ and $\leb(D\smallsetminus\wt D)=0$. Suppose that $\zeta^D$ and $\zeta^{\wt{D}}$ are subsequential weak limits of $\{\zeta^D_{N}\colon N\ge1\}$ and $\{\zeta^{\wt{D}}_{N}\colon N\ge1\}$ respectively, along the same subsequence. Then
	\begin{equation}
		\label{E:5.6i}
		\zeta^{D}(\rmd x \,\rmd t) \laweq
		\rme^{\alpha\Phi^{D,\wt{D}}(x)} \zeta^{\wt{D}}(\rmd x \,\rmd t),
	\end{equation}
	where $\Phi^{D,\wt{D}}=\NN(0,C^{D,\wt D})$ is independent of $\zeta^{\wt{D}}$ on the right-hand side.
\end{lemma}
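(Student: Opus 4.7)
The strategy is to transfer the decomposition $h^{D_N} = h^{\wt D_N} + \varphi^{D_N,\wt D_N}$ of Lemma~\ref{lem:Gibbs-Markov} to the measures $\zeta^D_N$ and pass to the limit along the common subsequence. For $x \in \wt D_N$ we have
\begin{equation*}
\rme^{\alpha(h^{D_N}_x - m_N)} = \rme^{\alpha \varphi^{D_N,\wt D_N}(x)}\,\rme^{\alpha(h^{\wt D_N}_x - m_N)}
\end{equation*}
and the second-coordinate value $(m_N - h^{D_N}_x)/\sqrt{\log N}$ equals $(m_N - h^{\wt D_N}_x)/\sqrt{\log N} - \varphi^{D_N,\wt D_N}(x)/\sqrt{\log N}$. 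Using the coupling of Lemma~\ref{lemma-3.5}, $\sup_{x \in \wt D^\delta}|\varphi^{D_N,\wt D_N}(\lfloor xN\rfloor) - \Phi^{D,\wt D}(x)|\to 0$ in probability, so on $\wt D^\delta$ the field $\varphi^{D_N,\wt D_N}$ is bounded in probability and the second-coordinate correction is $o(1)$. For any nonnegative $f \in C_\cc(D\times \R)$, the continuity modulus of $f$ absorbs this correction.

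To reduce to $\wt D^\delta$, I split $\langle \zeta^D_N,f\rangle$ into its restriction to $(\wt D^\delta \times \R)$ and the complementary piece. The complementary piece is controlled by the first-moment bound of Lemma~\ref{lemma-tight} applied on the event $\{h^{D_N} \le m_N + u\}$, together with Lemma~\ref{lemma-DGFF-tail} to handle the complementary event; the resulting bound is a constant multiple of $u\cdot N^{-2}|D_N \smallsetminus \wt D^\delta_N|$, which tends to $c\,u\,\leb(D \smallsetminus \wt D^\delta)$ as $N\to\infty$ and then to zero as $\delta \downarrow 0$ thanks to $\leb(D \smallsetminus \wt D)=0$. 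Since $u$ can be made arbitrarily large independently, this piece is negligible both at the level of $\zeta^D_N$ and of the putative right-hand side $\rme^{\alpha\Phi^{D,\wt D}}\zeta^{\wt D}$ (for which Lemma~\ref{lem:stoc_cont} gives $\zeta^{\wt D}((D\smallsetminus\wt D)\times \overline\R) = 0$ a.s.).

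On the good piece $\wt D^\delta$, I write
\begin{equation*}
\langle \zeta^D_N, f\,\mathbf{1}_{\wt D^\delta}\rangle
= \langle \rme^{\alpha\varphi^{D_N,\wt D_N}}\zeta^{\wt D}_N, f_N\,\mathbf{1}_{\wt D^\delta}\rangle,
\end{equation*}
where $f_N$ is $f$ with its second coordinate shifted by $-\varphi^{D_N,\wt D_N}(\lfloor \cdot N\rfloor)/\sqrt{\log N}$, converging uniformly to $f$ on compacts. Now I use that, at the discrete level, $\varphi^{D_N,\wt D_N} \independent h^{\wt D_N}$ and hence $\varphi^{D_N,\wt D_N} \independent \zeta^{\wt D}_N$. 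Along the common subsequence $\{N_k\}$ along which $\zeta^{\wt D}_{N_k} \Lawarrow \zeta^{\wt D}$, Lemma~\ref{lemma-3.5} gives $\varphi^{D_{N_k},\wt D_{N_k}}(\lfloor \cdot N_k\rfloor) \Lawarrow \Phi^{D,\wt D}$ locally uniformly. A standard argument (e.g.\ via characteristic functionals) shows that the discrete independence carries to the limit, so jointly $(\zeta^{\wt D}_{N_k},\varphi^{D_{N_k},\wt D_{N_k}}) \Lawarrow (\zeta^{\wt D}, \Phi^{D,\wt D})$ with independent marginals. Continuity of the pairing (in the vague topology for $\zeta^{\wt D}_{N_k}$ and the topology of uniform convergence on compacts for $\varphi$) then yields $\langle \zeta^D_N, f\,\mathbf{1}_{\wt D^\delta}\rangle \Lawarrow \langle \rme^{\alpha\Phi^{D,\wt D}}\zeta^{\wt D}, f\,\mathbf{1}_{\wt D^\delta}\rangle$, and sending $\delta\downarrow 0$ proves \eqref{E:5.6i}.

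The main obstacle is not any single step but rather the combination of three subtle issues: propagating the discrete independence of $\varphi^{D_N,\wt D_N}$ and $\zeta^{\wt D}_N$ to the limit; justifying that the small $\sqrt{\log N}^{-1}\varphi$ shift in the second coordinate is genuinely negligible (here it is crucial that the coupling of Lemma~\ref{lemma-3.5} is locally uniform and that $f$ is uniformly continuous); and quantitatively showing that near-boundary and $D\smallsetminus \wt D$ mass vanishes, for which the fully quantitative Lemma~\ref{lemma-tight}---valid only for the canonical discretization \eqref{E:2.1}---is essential.
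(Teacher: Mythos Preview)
Your proposal is correct and follows essentially the same approach as the paper: the Gibbs-Markov decomposition $h^{D_N}=h^{\wt D_N}+\varphi^{D_N,\wt D_N}$, the coupling of Lemma~\ref{lemma-3.5} to replace the discrete binding field by~$\Phi^{D,\wt D}$, and tightness of~$\zeta^{\wt D}_N$ to absorb the uniform errors. The paper streamlines slightly by taking $f\in C_\cc(\wt D\times\R)$ from the outset (so that $\supp f\subseteq\wt D^\delta\times\R$ for some $\delta>0$ automatically) and extending to $C(\overline D\times\overline\R)$ only at the end via Lemma~\ref{lem:stoc_cont}; this avoids your indicator $1_{\wt D^\delta}$, which is not continuous and thus requires the extra observation that the limit measure does not charge $\partial\wt D^\delta\times\R$.
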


\begin{proof} 
	We remark that the fact that~$\leb(D\smallsetminus\wt D)=0$ ensures, via Lemma~\ref{lem:stoc_cont}, that neither~$\zeta^D$ nor~$\zeta^{\wt D}$ charge the set~$(D\smallsetminus\wt D)\times\R$ a.s.\ and so it is immaterial that~$\Phi^{D,\wt D}$ is not defined there. 
	
	Pick $f\in C_\cc(\wt D\times\R)$ with~$f\ge0$. Noting that~$\wt D_N\subseteq D_N$, let $h^{\wt D_N}$, resp.,~$\varphi^{ D_N ,\wt D_N}$ be independent fields with the law of DGFF in~$\wt D_N$, resp., the ``binding'' field between domains~$D_N$ and~$\wt D_N$. Lemma~\ref{lem:Gibbs-Markov} then shows that~$h^{D_N}:= h^{\wt D_N}+\varphi^{D_N,\wt D_N}$ has the law of DGFF in~$D_N$. 
	Define the functions
	\begin{equation}
		e_N(x):=\exp\big\{\alpha \varphi^{D_N,\wt D_N}(\lfloor N x \rfloor)\big\}
	\end{equation}
	and
	\begin{equation}
		f_N(x,t) := f \Bigl(x, \, t - \varphi^{D_N,\wt D_N}(\lfloor Nx \rfloor) /\sqrt{\log N} \Bigr).
	\end{equation}
	Using~$h^{D_N}$ to define~$\zeta^D_N$ and~$h^{\wt D_N}$ to define~$\zeta^{\wt D}_N$, we then have
	\begin{equation}
		\label{e:3.13}
		\la \zeta^D_N ,\, f \ra = \la \zeta^{\wt D}_N ,\, e_N f_N \ra \,,
	\end{equation}
	where~$\zeta^{\wt D}_N$ is independent of~$\varphi^{D_N,\wt D_N}$ (implicitly contained  in~$e_N$ and  $f_N$) on the right-hand side. Hereby we get
	\begin{equation}
		\label{e:3.15a}
		\Bigl|\la \zeta^D_N ,\, f \ra-\bigl\la \zeta^{\wt D}_N ,\, \texte^{\alpha\Phi^{D,\wt D}}f \bigr\ra\Bigr|\le
		\zeta^{\wt D}_N(D\times\R)\bigl\Vert e_Nf_N-\texte^{\alpha\Phi^{D,\wt D}}f\bigr\Vert_\infty.
	\end{equation}
	The first term on the right-hand side is tight thanks to Lemma~\ref{lem:tightness}. The uniform continuity of~$f$ and the coupling in Lemma~\ref{lemma-3.5} show that the second term tends to~$0$ in probability as $N \to \infty$. Since~$(x,t)\mapsto\texte^{\alpha\Phi^{D,\wt D}(x)}f(x,t)$ is uniformly continuous a.s., if~$\{N_k\}_{k\ge1}$ is a sequence along  which~$\zeta^{\wt D}_{N_k}$ tends to~$\zeta^{\wt D}$, by conditioning on~$\Phi^{D,\wt D}(x)$ we get
	\begin{equation}
		\label{E:5.11}
		\E\bigl(\texte^{-\la \zeta^{\wt D}_{N_k} ,\, \texte^{\alpha\Phi^{D,\wt D}}f \ra}\bigr)\,\,\underset{k\to\infty}\longrightarrow\,\,\E\bigl(\texte^{-\la \zeta^{\wt D} ,\, \texte^{\alpha\Phi^{D,\wt D}}f \ra}\bigr).
	\end{equation}
	Applying the subsequential convergence to the first term on the left of \eqref{e:3.15a} as well, we infer
	\begin{equation}
		\E\bigl(\texte^{-\la \zeta^{\wt D} ,\, \texte^{\alpha\Phi^{D,\wt D}}f \ra}\bigr)=\E\bigl(\texte^{-\la \zeta^{ D} ,\, f \ra}\bigr),\quad f\in C_\cc(\wt D\times\R).
	\end{equation}
	The equality extends to all~$f\in C(\overline D\times\overline\R)$ by approximating~$f$ on~$\wt D\times\R$ by an increasing sequence of functions in $C_\cc(\wt D\times\R)$ and invoking the Bounded Convergence Theorem along with Lemma~\ref{lem:stoc_cont}. Hereby we get \eqref{E:5.6i}.
\end{proof} 

We are ready to give:

\begin{proof}[Proof of Proposition~\ref{lem:sim-lim}]
	Let $\fU_0$ be the collection of domains in~$\R^2$ consisting of finite unions of non-degenerate rectangles with  corners  in $\bbQ^2$. Since $\fU_0$ is countable, given a sequence of~$N$'s tending to infinity, Cantor's diagonal argument permits us to extract a strictly increasing subsequence $\{N_k\}_{k\ge1}$ such that \eqref{e:3.1} holds for all~$D\in\mathfrak D_0$. We thus have to show that the convergence extends (with the same subsequence) to all~$D\in\mathfrak D$.
	
	Let~$D\in\mathfrak D$ and, for each~$n\ge1$, let $D^n$ be the union of all open rectangles contained in~$D$ with  corners  in $(2^{-n}\Z)^2$. Since~$D$ is open, we have~$D^n\ne\emptyset$ once~$n$ is sufficiently large and $D^n\uparrow D$ as~$n\to\infty$.
	Moreover, if~$f\in C_\cc(D\times\R)$, then $\supp(f)\subseteq \overline{D^n}\times[-n,n]$ for all~$n$ sufficiently large, say~$n\ge n_f$.
	Hence, for every limit $\zeta^D$ of $(\zeta^D_{N_k})$ along a subsubsequence $(k_\ell)$ and every $f\in C_\cc(D\times\R)$,
	\begin{equation}
		\la \zeta^D, f \ra
		= \lim_{n\to\infty}\la \zeta^{D^n}, \texte^{\Phi^{D,D^n}}f \ra
	\end{equation}
	by Lemma~\ref{lem:Gibbs-Markov}.
	As the right-hand side and hence $\zeta^D$ does not depend on the subsubsequence $(k_\ell)$, it follows that $\zeta^D_{N_k}$ converges  in law  to $\zeta^D$.
\end{proof}

In order to show that the limit measure does not depend on the underlying sequence, we need to verify the conditions of Theorem~\ref{BL2-thm2.8}. Most of these conditions have already been checked; one that still requires some work is the content of:

\begin{lemma}
	\label{lem:La-unif}
	Let   $\delta\in(0,1)$ and let $T,T_\delta\in\mathfrak{D}$ be equilateral triangles centered  at the origin of~$\R^2$  with side lengths $1$ and $1-\delta$, respectively,  such that~$T_\delta$ is a dilation of~$T$.  For each~$K\ge1$ natural, let~$\zeta^{K^{-1}T}$ be a subsequential weak limit of~$\{\zeta^{K^{-1}T}_N\colon N\ge1\}$, using the same subsequence as ensured by Proposition~\ref{lem:sim-lim}. Then for any $f\in C_\cc(\R)$ satisfying~$f\ge0$, 
	\begin{equation}
		\label{eq:La-unif}
		\lim_{\lambda\downarrow 0}\sup_{K\geq 1}\Biggl|\frac{\bbE\left(K^4
			\big\la \zeta^{K^{-1}T},\Ind_{K^{-1}T_\delta }\otimes f \big \ra
			\,\texte^{-\lambda K^4\la \zeta^{K^{-1}T},\Ind_{K^{-1}T}\otimes f  \ra}\right)}{\log(\ffrac1\lambda)}
		-c_\star\int_0^\infty f(s)\;s\;\rme^{-\frac{s^2}{2g}}\;\rmd s
		\int_{T_\delta} r^{T}(x)^2\;\rmd x\Biggr|=0,
	\end{equation}
	where~$c_\star$ is the constant from Theorem~\ref{thm-A}.
\end{lemma}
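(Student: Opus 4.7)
The plan is to reduce the statement to Lemma~\ref{lem:tilted} applied to the single domain $D=T$, using the exact lattice scaling identity $(K^{-1}T)_{KM}=T_M$ valid for all integers $K,M\ge 1$, which is immediate from the definition~\eqref{E:2.1} and the scaling of $\textd_\infty$. Setting $N=KM$, this identity gives $h^{(K^{-1}T)_N}\laweq h^{T_M}$, and a direct change of variables in the definition~\eqref{e:1.3} of $\zeta^{K^{-1}T}_N$ -- using $\alpha\cdot 2\sqrt g=4$ together with the explicit form of~$m_N$ -- yields, for every Borel $A\subseteq T$ and every $f\in C_\cc(\R)$, the finite-$N$ distributional identity
\begin{equation*}
K^4\,\la\zeta^{K^{-1}T}_N,\Ind_{K^{-1}A}\otimes f\ra
\;\laweq\;\rho_{M,K}\,\la\zeta^T_M,\Ind_A\otimes f_{M,K}\ra,
\end{equation*}
where $\rho_{M,K}:=K^4\texte^{-\alpha(m_N-m_M)}(\log M/\log N)=1+o(1)$ and $f_{M,K}(s):=f\bigl((m_N-m_M)/\sqrt{\log N}+s\sqrt{\log M/\log N}\bigr)\to f$ uniformly on compacts as $M\to\infty$ for each fixed $K$. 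The identity holds consistently for the pair of choices $A=T_\delta$ and $A=T$ that appear in~\eqref{eq:La-unif}.

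Substituting the identity into the expression inside the absolute value of~\eqref{eq:La-unif} reduces it, modulo a vanishing $M\to\infty$ perturbation coming from $\rho_{M,K}$ and $f_{M,K}$, to
\begin{equation*}
\frac{\bbE\bigl(\la\zeta^T_M,\Ind_{T_\delta}\otimes f\ra\,\texte^{-\lambda\la\zeta^T_M,\Ind_T\otimes f\ra}\bigr)}{\log(\ffrac1\lambda)}.
\end{equation*}
This expression is manifestly independent of~$K$ and, by Lemma~\ref{lem:tilted} applied with $D:=T$, $f_1:=\Ind_{T_\delta}$ and $f_2:=f$, tends to $c_\star\int_0^\infty f(s)se^{-s^2/(2g)}\textd s\cdot\int_{T_\delta}r^T(x)^2\textd x$ in the iterated limit $M\to\infty$ followed by $\lambda\downarrow 0$. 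Since the $K$-dependence is fully absorbed by the scaling identity, the uniformity in $K\ge 1$ required by \eqref{eq:La-unif} is inherited automatically from the $K$-free Lemma~\ref{lem:tilted}.

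The main technical point is the passage from the finite-$N$ distributional identity to the analogous identity for the limit measures $\zeta^{K^{-1}T}$ and $\zeta^T$. The subsequence $\{N_k\}$ from Proposition~\ref{lem:sim-lim} need not consist of multiples of~$K$, so for each fixed~$K$ I would replace it by $\{N_k':=K\lfloor N_k/K\rfloor\}$ and argue, via the Gibbs--Markov decomposition (Lemma~\ref{lem:Gibbs-Markov}) applied to the thin boundary layer of lattice width at most~$K$, together with the tightness estimates of Propositions~\ref{lem:first-moment} and~\ref{lem:second-moment}, that the weak limit of $\zeta^{K^{-1}T}_{N_k'}$ agrees with $\zeta^{K^{-1}T}$. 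Uniform integrability, supplied again by Propositions~\ref{lem:first-moment} and~\ref{lem:second-moment}, then permits passing the expectation through the limit on both sides of the identity. This bookkeeping around the discretization-robustness of the subsequential limit, rather than any genuinely new estimate, is the only delicate part of the argument.
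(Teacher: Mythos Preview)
Your approach is correct and essentially the same as the paper's: both reduce to Lemma~\ref{lem:tilted} on the fixed domain~$T$ by relating $K^4\langle\zeta^{K^{-1}T}_N,\tilde f_K\rangle$ to $\langle\zeta^T_{\lfloor N/K\rfloor},\tilde f\rangle$ and noting that the latter is $K$-free. Your exact identity $(K^{-1}T)_{KM}=T_M$ makes the scaling transparent, but the ``subsequence adjustment'' you defer to the last paragraph is precisely the Gibbs--Markov step between $(K^{-1}T)_N\supseteq T_{\lfloor N/K\rfloor}$ that the paper carries out directly in~\eqref{E:NK-N}, with the binding field vanishing on interior sets; note also that uniform integrability for passing the expectation to the limit follows more simply from the deterministic bound $X\texte^{-\lambda Y}\le (\lambda\texte)^{-1}$ (using $X\le Y$ up to constants) than from the truncated moment estimates you invoke.
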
 

\begin{proof}
	Let~$\tilde f=f_1\otimes f_2$ for
	$f_1 \in C_\cc(\overline { T })$ and $f_2 \in C_\cc(\R)$,
	and let~$\{N_j\}_{j\ge1}$ be the sequence defining the limiting measures $\{\zeta^{K^{-1} T }\colon K\ge1\}$ in the statement. The proof hinges on the convergence statement
	\begin{equation}
		\label{E:5.18}
		\bigl\langle\zeta_{\lfloor N_j/K\rfloor}^{ T },\tilde f\bigr\rangle\,\,\underset{j\to\infty}\Lawarrow\,\,
		K^4\bigl\langle\zeta^{K^{-1} T },\tilde f_K\bigr\rangle,
	\end{equation}
	where we set
	\begin{equation}
		\tilde f_K(x,t):= \tilde f(xK,t).
	\end{equation}
	Indeed, noting that $(\Ind_{ T_\delta}\otimes f)_K=\Ind_{K^{-1}  T_\delta }\otimes f$, we can rewrite the quantity in \eqref{eq:La-unif} with $\Ind_{ T_\delta }\otimes f$ replaced by~$\tilde f$ (while leaving the function in the exponent unchanged) as
	\begin{equation}
		\lim_{\lambda\downarrow 0}\sup_{K\geq 1}\lim_{j\to\infty}\Bigg|\frac{\bbE\left(\langle\zeta^{ T}_{\lfloor N_j/K\rfloor},\tilde f\bigr\rangle
			\,\texte^{-\lambda \langle\zeta^{ T }_{\lfloor N_j/K\rfloor},\Ind_{ T  }\otimes f\rangle}\right)}{\log(\ffrac1\lambda)}
		-c_\star\int_{ T\times[0,\infty)}\;s\;\rme^{-\frac{s^2}{2g}}
		r^{ T }(x)^2\;\tilde f(x,s)\;\rmd x\rmd s\Bigg|,
	\end{equation}
	The $j\to\infty$ limit can be bounded by the \emph{limes superior} as~$N\to\infty$ of the same expression with $\lfloor N_j/K\rfloor$ replaced by~$N$. This \emph{limes superior} no longer depends on~$K$, rendering the supremum over~$K\ge1$ redundant. Moreover, the $N\to\infty$ limit vanishes by Lemma~\ref{lem:tilted}.
	 Approximating $\Ind_{T_\delta}$ by functions in $C_\cc(T)$ then extends the limit to $\tilde f$ replaced by $\Ind_{T_\delta}\otimes f$. 
	
	Moving to the proof of \eqref{E:5.18}, we now let~$\tilde f\in C_\cc( T\times\R)$. Then there exists an open set~$B$ with~$\overline B\subseteq  T$ such that~$\supp( \tilde f )\subseteq B\times\R$. For all naturals~$K\ge1$ we then have
	\begin{equation}
		\label{E:5.20}
		\{x\in  T_N: xK/N \in B\}\subseteq  T_{\lfloor N/K\rfloor}\subseteq (K^{-1} T)_N
	\end{equation}
	once~$N$ is sufficiently large.
	Using the Gibbs-Markov property to write $h^{(K^{-1} T)_N}= h^{ T_{\lfloor N/K\rfloor}}+\varphi^{(K^{-1} T)_N, T_{\lfloor N/K\rfloor}}$ and denoting
	\begin{equation}
	\begin{aligned}
		\tilde f_{N,K}(&x,t):=K^4\texte^{\alpha(m_{\lfloor N/K\rfloor}-m_N)}\frac{\log {\lfloor N/K\rfloor}}{\log N} \\
		&\times
		\texte^{\alpha\varphi^{(K^{-1} T)_N, T_{\lfloor N/K\rfloor}}_x}
		\tilde f\biggl(x\frac{K\lfloor N/K\rfloor}N, t\sqrt{\frac{\log\lfloor N/K\rfloor}{\log N}}+\frac{m_N-m_{\lfloor N/K\rfloor}}{\sqrt{\log N}}-\frac{\varphi_x^{(K^{-1} T)_N, T_{\lfloor N/K\rfloor}}}{\sqrt{\log N}}\biggr),
	\end{aligned}
	\end{equation}
	which depends on the sample of $\varphi^{(K^{-1} T)_N, T_{\lfloor N/K\rfloor}}$,
	then gives
	\begin{equation}
		\label{E:NK-N}
		\begin{aligned}
			K^4\bigl\langle&\zeta^{K^{-1} T}_N,\tilde f_K\bigr\rangle
			=\frac{K^4}{\log N}\sum_{x\in (K^{-1} T)_N}\texte^{\alpha(h_x^{(K^{-1} T)_N}-m_N)}\,
			\tilde f\Bigl(x\frac K{N},\frac{m_N-h_x^{(K^{-1} T)_N}}{\sqrt{\log N}}\Bigr)
			\\
			&=\frac1{\log {\lfloor N/K\rfloor}}\sum_{x\in { T}_{\lfloor N/K\rfloor}}\texte^{\alpha(h_x^{ T_{\lfloor N/K\rfloor}}-m_{\lfloor N/K\rfloor})}\,
			\tilde f_{N,K}\Bigl(\frac x{\lfloor N/K\rfloor},\frac{m_{\lfloor N/K\rfloor}-h_x^{ T_{\lfloor N/K\rfloor}}}{\sqrt{\log \lfloor N/K\rfloor}}\Bigr)
			\\
			&=\bigl\langle\zeta_{\lfloor N/K\rfloor}^{ T},\tilde f_{N,K}\bigr\rangle
		\end{aligned}
	\end{equation}
	provided~$N$ is so large that the left inclusion in \eqref{E:5.20} applies.
	 By Lemma~\ref{l:cd}, we have
	\begin{equation}
		\max_{\substack{x\in  T_N\colon\\ xK/N\in B}}\bigl|\varphi^{(K^{-1} T)_N, T_{\lfloor N/K\rfloor}}_x\bigr|\,\underset{N\to\infty}\longrightarrow\,0
	\end{equation}
	in probability.
	 Since also  $\alpha(m_N-m_{\lfloor N/K\rfloor})\to 4\log K$ as~$N\to\infty$, we now get  from uniform continuity of $\tilde f$ in the second component  that $\Vert \tilde f_{N,K}- \tilde f\Vert_\infty$
	tends to zero in probability as~$N\to\infty$. As $\zeta_{\lfloor N/K\rfloor}^{ T}$ is tight, also
	$\bigl\langle\zeta_{\lfloor N/K\rfloor}^{ T},\tilde f_{N,K}\bigr\rangle - \bigl\langle\zeta_{\lfloor N/K\rfloor}^{ T},\tilde f\bigr\rangle$
	converges to zero in probability as $N\to\infty$.
	The left-hand side of~\eqref{E:NK-N} converges to $K^4\langle\zeta^{K^{-1}T},\tilde f_K\rangle$ by choice of $\{N_j\}_{j\ge 1}$. We can now conclude using Slutzky's lemma that \eqref{E:5.18} follows as desired.
\end{proof}

We are now ready for stating and proving the part of Theorem~\ref{thm-A} dealing solely with the near-extremal process:

\begin{theorem}[Near-extremal process convergence]
	\label{thm-5.5}
	In the limit~$N\to\infty$, the measures $\{\zeta^D_N\colon N\ge1\}$ converge in law to the measure~$\zeta^D$ in \eqref{E:2.3}.
\end{theorem}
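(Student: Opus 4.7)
The plan is to identify every subsequential limit of $\{\zeta^D_N\}$ with the product measure in~\eqref{E:2.3}, thereby upgrading subsequential to full convergence. Tightness (Lemma~\ref{lem:tightness}) combined with Proposition~\ref{lem:sim-lim} furnishes a subsequence $\{N_k\}$ along which $\zeta^D_{N_k}\Lawarrow\zeta^D$ for all $D\in\mathfrak D$ simultaneously; it then suffices to show any such limit has the claimed form.

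To invoke the cLQG characterization developed in Section~\ref{sec-4a}, I would for each non-negative $f\in C_\cc(\R)$ introduce the $f$-spatial marginal
\begin{equation*}
M^{D,f}(A):=\langle\zeta^D,\1_A\otimes f\rangle,\qquad A\subseteq D\text{ Borel,}
\end{equation*}
and verify that the family $\{M^{D,f}\colon D\in\mathfrak D\}$ meets the five hypotheses of Theorem~\ref{BL2-thm2.8} with constant $c_f:=c_\star\int_0^\infty f(s)\,s\,\texte^{-s^2/(2g)}\,\textd s$. Conditions (1) and (2) are immediate from Lemma~\ref{lem:stoc_cont}; the translation covariance (3) descends from the lattice shift-invariance of the DGFF; the disjoint-union additivity in~\eqref{E:2.18ua} follows because DGFFs on disjoint lattice domains are independent and this survives the limit; the Gibbs--Markov relation~\eqref{E:2.19ua} is precisely Lemma~\ref{lem:GibbsMarkov}; and the uniform Laplace-transform asymptotic~\eqref{E:1.25a} is exactly the content of Lemma~\ref{lem:La-unif}. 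Theorem~\ref{BL2-thm2.8} then yields $M^{D,f}\laweq c_f Z^D$.

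To upgrade this family of marginal identifications to a single pathwise product form, I would apply Theorem~\ref{thm-3.10} to pairs $(M^{D,f},M^{D,f'})$. Both families live on the common probability space carrying $\zeta^D$, and the joint Gibbs--Markov hypothesis~\eqref{E:3.25u} holds because Lemma~\ref{lem:GibbsMarkov} transports $\zeta^D$ itself via a single binding field $\Phi^{D,\wt D}$. This yields $c_f^{-1}M^{D,f}=c_{f'}^{-1}M^{D,f'}$ almost surely. Fixing a reference $f_0$ with $c_{f_0}=1$ and setting $Z^D:=M^{D,f_0}$, one obtains, on a single almost-sure event, $\langle\zeta^D,\1_A\otimes f\rangle=c_f Z^D(A)$ for all $f$ in a countable dense subset of $C_\cc(\R)_{\ge0}$ and all $A$ in a countable generating $\pi$-system of Borel subsets of $D$. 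A standard monotone-class/approximation argument then identifies $\zeta^D$ with the product measure on the right-hand side of~\eqref{E:2.3}, and since $Z^D$ inherits the cLQG law from Theorem~\ref{BL2-thm2.8} this is the claimed limit.

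The main obstacle is the uniform Laplace-transform condition~(5) of Theorem~\ref{BL2-thm2.8}, whose verification rests on the truncated first- and second-moment bounds (Propositions~\ref{lem:first-moment} and~\ref{lem:second-moment}) already assembled in Lemma~\ref{lem:La-unif}. A secondary subtlety is that Theorem~\ref{BL2-thm2.8} delivers only an equality in law of $M^{D,f}$ with $c_f Z^D$, so one must invoke the pathwise version (Theorem~\ref{thm-3.10}) to produce a single $Z^D$ compatible with all test functions $f$ and hence an almost-sure product factorization. Once this subsequential limit is identified, its uniqueness in law extends convergence from $\{N_k\}$ to the full sequence.
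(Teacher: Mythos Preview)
Your proposal is correct and follows essentially the same approach as the paper's own proof: define the $f$-marginals $M^{D,f}$, verify the hypotheses of Theorem~\ref{BL2-thm2.8} via Lemmas~\ref{lem:stoc_cont}, \ref{lem:GibbsMarkov} and~\ref{lem:La-unif}, then invoke Theorem~\ref{thm-3.10} to tie the different $f$-marginals to a single realization of~$Z^D$, and finally pass to a countable dense family to obtain the product form. The only cosmetic difference is that the paper writes the normalized measure as $W_f^D$ rather than fixing a reference $f_0$, and it explicitly requires $\leb(f>0)>0$ (equivalently $c_f>0$) when applying Theorems~\ref{BL2-thm2.8} and~\ref{thm-3.10}; you should make this non-degeneracy condition explicit as well.
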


\begin{proof}
	For each~$D\in\mathfrak D$, let~$\zeta^D$ be a weak subsequential limit of $\{\zeta^D_N\colon N\ge1\}$ along the same subsequence as ensured by Proposition~\ref{lem:sim-lim}. Pick~$f\in C_\cc(\R)$ non-negative with $\leb(f>0)>0$. Define the Borel measure
	\begin{equation}
		M^{D}(B):=\big\la\zeta^{D},\Ind_B\otimes f\big\ra.
	\end{equation}
	We will now check that~$M^D$ satisfies the assumptions of Theorem~\ref{BL2-thm2.8}. That~$M^D$ is concentrated on~$D$, of finite total mass and not charging Lebesgue null sets a.s.\ was shown in Lemma~\ref{lem:stoc_cont} (using that~$f$ is bounded). The factorization \eqref{E:2.18ua} over disjoint domains  follows from the fact that if~$D\cap\wt D=\emptyset$ then  the restrictions of $h^{D_N\cup \wt D_N}$ to $D_N$ and $\wt D_N$ are independent of each other.  The Gibbs-Markov property \eqref{E:2.19ua} was shown in Lemma~\ref{lem:GibbsMarkov}.  The uniform Laplace transform tail \eqref{E:1.25a} was verified in Lemma~\ref{lem:La-unif} with constant~$c$ given by
	\begin{equation}
		c:=c_\star\int_0^\infty f(t) t\texte^{-\frac{t^2}{2g}}\textd t.
	\end{equation}
	Invoking \eqref{E:3.25i} in Theorem~\ref{BL2-thm2.8} we conclude that, for each~$f\in C_\cc(\R)$ non-negative with non-zero total integral, the measure
	\begin{equation}
		\label{E:5.25}
		W_f^D(B):=\Bigl(c_\star\int_0^\infty f(t) t\texte^{-\frac{t^2}{2g}}\textd t\Bigr)^{-1}\big\la\zeta^{D},\Ind_B\otimes f\big\ra.
	\end{equation}
	has the law of cLQG. 
	
	Moreover, since the Gibbs-Markov property \eqref{E:2.19ua} uses the same ``binding'' field for all~$f$ as above, Theorem~\ref{thm-3.10} ensures that, for all $f_1,f_2\in C_\cc(\R)$ non-negative and not vanishing Lebesgue-a.e.,
	\begin{equation}
		W_{f_1}^D = W_{f_2}^D \quad\text{a.s.}
	\end{equation}
	The separability of~$C_\cc(\R)$ permits us to choose the null event so that the equality holds for all~$f_1,f_2\in C_\cc(\R)$ simultaneously a.s. Standard extension arguments then show that~$\zeta^D$ takes the form \eqref{E:2.3}.
\end{proof}

\subsection{Adding the extremal process}
Our next item of business is  adding  the convergence of the extremal process~$\eta^D_N$  to that of $\zeta^D_N$.  We start by recalling from~\cite{BL3} the structured version of the extremal process.
We denote $\Lambda_r(x):=\{y\in\Z^2\colon \rmd_\infty(x,y)<r\}$. Given any positive sequence $\{r_N\}_{N\ge1}$, define a measure on $D\times\R\times\R^{\Z^2}$ by
\begin{equation}
	\tilde\eta^D_N:=\sum_{x\in D_N}\Ind_{\{h^{D_N}_x=\max_{z\in\Lambda_{r_N}(x)}h^{D_N}_z\}}
	\,\delta_{x/N}\otimes\delta_{h^{D_N}_x-m_N}\otimes\delta_{\{h^{D_N}_x-h^{D_N}_{x+z}\colon z\in\Z^2\}}.
\end{equation}
This measure records the positions, centered values and configuration  near and  relative to the $r_N$-local maxima of~$h^{D_N}$. Our aim is to show:

\begin{theorem}[Joint convergence of near-extremal and extremal process]
	\label{thm-5.6}
	For any $D\in\mathfrak D$ and any $\{r_N\}_{N\ge1}$ with $r_N\to\infty$ and~$N/r_N\to\infty$,
	\begin{equation}
		\bigl(\zeta^D_N,\tilde\eta^D_N\bigr)\,\,\underset{N \to \infty}{\Lawarrow}\,\, (\zeta^D,\tilde\eta^D),
	\end{equation}
	where~$\zeta^D$ is derived from a sample ~$Z^D$ of cLQG in~$D$ as in \eqref{E:2.3} and, conditional on~$Z^D$,
	\begin{equation}
		\label{e:etaPPP}
		\tilde\eta^D=\text{\rm PPP}\Bigl(\bar c\,Z^D(\textd x)\otimes\texte^{-\alpha h}\textd h\otimes\nu(\textd\phi)\Bigr)
	\end{equation}
	where~$\bar c$ and $\nu$ are as in \eqref{E:1.3a}. More precisely, the joint law of $(\zeta^D,\tilde\eta^D)$ is such that, for each non-negative $f\in C_\cc(D\times\R)$ and $\tilde f\in C_\cc(D\times\R\times\R^{\Z^2})$,
	\begin{equation}
	\begin{aligned}
		\label{E:5.30}
		E\bigl(\texte^{-\langle\zeta^D,f\rangle-\langle\tilde\eta^D,\tilde f\rangle}\bigr)
		=E\biggl(\exp\Bigl\{-c_\star\int_D &Z^D(\textd x)\int_0^\infty\textd t\, t\texte^{-\frac{t^2}{2g}}\,f(x,t)
		\\
		&-\bar c\int_D Z^D(\textd x)\int_{\R}\textd h\,\texte^{-\alpha h}\int_{\R^{\Z^2}}\nu(\textd\phi)\bigl(1-\texte^{-\tilde f(x,h,\phi)}\bigr)\Bigr\}\biggr),
	\end{aligned}
	\end{equation}
	where the expectation  on the right  is over the law of~$Z^D$.
\end{theorem}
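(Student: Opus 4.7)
\begin{proofsect}{Proof sketch}

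The plan is to combine tightness and subsequential extraction with the uniqueness result Theorem~\ref{thm-3.10} in order to identify, path-wise, the cLQG measure driving the near-extremal limit from Theorem~\ref{thm-5.5} with the random intensity of the Cox-process limit of~$\tilde\eta^D_N$ established in~\cite{BL3}. Once this identification is in place, the joint Laplace functional~\eqref{E:5.30} follows by conditioning on the common~$Z^D$ and invoking the standard Poisson Laplace formula.

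Tightness of $\{(\zeta^D_N,\tilde\eta^D_N)\colon N\ge 1\}$ follows from the marginal tightness of~$\zeta^D_N$ (Lemma~\ref{lem:tightness}) and of~$\tilde\eta^D_N$ (from~\cite{BL3}). Passing to a joint weak limit $(\zeta^D,\tilde\eta^D)$ along a subsequence, Theorem~\ref{thm-5.5} identifies $\zeta^D=c_\star Z_1^D(\textd x)\otimes\1_{[0,\infty)}(t)\,t\rme^{-t^2/(2g)}\,\textd t$ with~$Z_1^D$ a cLQG sample, while the structured extremal-process limit from~\cite{BL3} identifies~$\tilde\eta^D$ as a Cox process with random intensity $\bar c\, Z_2^D(\textd x)\otimes\texte^{-\alpha h}\textd h\otimes\nu(\textd\phi)$ for another cLQG sample~$Z_2^D$. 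Since both $Z_j^D$ live on the same probability space, the proof reduces to showing that $Z_1^D=Z_2^D$ almost surely.

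I would prove this via Theorem~\ref{thm-3.10}: both families $\{Z_j^D\colon D\in\mathfrak D\}$, $j\in\{1,2\}$, satisfy conditions~(1)--(3) and~\eqref{E:2.18ua} of Theorem~\ref{BL2-thm2.8} and the uniform Laplace-tail~\eqref{E:1.25a} (by Lemma~\ref{lem:La-unif} for~$j=1$ and by~\cite{BL3} for~$j=2$). The crux is the \emph{joint} Gibbs-Markov identity~\eqref{E:3.25u} with a \emph{common} binding field. Starting from the DGFF decomposition $h^{D_N}=h^{\wt D_N}+\varphi^{D_N,\wt D_N}$ of Lemma~\ref{lem:Gibbs-Markov} and arguing exactly as in Lemma~\ref{lem:GibbsMarkov}, one obtains $\zeta^D(\textd x,\textd t)\laweq\texte^{\alpha\Phi^{D,\wt D}(x)}\zeta^{\wt D}(\textd x,\textd t)$. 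The same decomposition, applied to $\tilde\eta^D_N$, shifts the height coordinate of $\tilde\eta^{\wt D}_N$ by~$\varphi^{D_N,\wt D_N}$; passing to the $N\to\infty$ limit via the uniform coupling of Lemma~\ref{lemma-3.5} turns the intensity $\bar c\, Z_2^{\wt D}(\textd x)\otimes\texte^{-\alpha h}\textd h\otimes\nu(\textd\phi)$ into $\bar c\,\texte^{\alpha\Phi^{D,\wt D}(x)}Z_2^{\wt D}(\textd x)\otimes\texte^{-\alpha h}\textd h\otimes\nu(\textd\phi)$. Because both identities arise from the same discrete decomposition, they hold jointly with the same sample of~$\Phi^{D,\wt D}$, verifying~\eqref{E:3.25u}. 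Theorem~\ref{thm-3.10} then forces $Z_1^D=Z_2^D$ a.s.

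The main obstacle is this joint Gibbs-Markov step: the binding field~$\varphi^{D_N,\wt D_N}$ enters~$\zeta^D_N$ as an exponential Radon-Nikodym tilt, with an accompanying shift of the $t$-coordinate by $\varphi/\sqrt{\log N}$ that disappears in the limit, but it enters~$\tilde\eta^D_N$ as an $O(1)$ shift of the height coordinate that \emph{survives} the limit; keeping both actions consistent on the same sample of~$\varphi^{D_N,\wt D_N}$ requires combining the uniform coupling of Lemma~\ref{lemma-3.5} with the pre-limit local control on~$\tilde\eta^D_N$ from~\cite{BL3}, first for test functions compactly supported in~$\wt D\times\R\times\R^{\Z^2}$ and then extended by approximation using Lemma~\ref{lem:stoc_cont}. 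Once $Z_1^D=Z_2^D=:Z^D$ is established, the formula~\eqref{E:5.30} is immediate by conditioning on~$Z^D$, and uniqueness of the joint law forces convergence along the full sequence.

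\end{proofsect}
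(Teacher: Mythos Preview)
Your strategy matches the paper's almost exactly: extract a joint subsequential limit, identify the two marginal laws, and then use Theorem~\ref{thm-3.10} to path-wise equate the two cLQG measures, with the joint Gibbs-Markov step as the technical heart.

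One point you gloss over that the paper makes explicit: you write that $\tilde\eta^D$ is a Cox process driven by ``another cLQG sample $Z_2^D$'' and then assert that $Z_2^D$ ``lives on the same probability space.'' This requires a measurable recovery of the random intensity from the Cox-process sample, not just knowledge of the marginal law. The paper does this via an explicit law-of-large-numbers limit (Lemma~\ref{lemma-lln}): setting
\[
\vartheta^D(\cdot):=\lim_{s\to\infty}\alpha^{-1}\rme^{-\alpha s}\,\tilde\eta^D\bigl(\cdot\times(-s,\infty)\times\R^{\Z^2}\bigr)
\]
recovers $\bar c\,Z_2^D$ as a measurable function of $\tilde\eta^D$. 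The same lemma records that a height-shift of $\tilde\eta^D$ by a function $f$ becomes an $\texte^{\alpha f}$-tilt of~$\vartheta^D$, which is precisely how the joint Gibbs-Markov identity for $(Z_1^D,Z_2^D)$ is read off from the one for $(\zeta^D,\tilde\eta^D)$. Without this extraction, your sentence ``passing to the $N\to\infty$ limit \ldots\ turns the intensity \ldots\ into $\bar c\,\texte^{\alpha\Phi^{D,\wt D}}Z_2^{\wt D}\otimes\cdots$'' is a statement about laws, not about the particular $Z_2^D$ sitting alongside your $Z_1^D$.

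For the joint Gibbs-Markov step itself, the paper (Lemma~\ref{lemma-GM2}) resolves your ``main obstacle'' by controlling how adding the slowly-varying binding field can relocate the $r_N$-local maxima. The ingredient is a gap estimate (\cite[Theorem~9.2]{B-notes}, not \cite{BL3}): with high probability, near each high value of $h^{\wt D_N}$ there is a definite gap between the maximum in $\Lambda_{r_N/2}$ and the maximum in the annulus $\Lambda_{2r_N}\smallsetminus\Lambda_{r_N/2}$. Combined with the fact that the oscillation of $\varphi^{D_N,\wt D_N}$ at scale $2r_N$ vanishes (via Lemma~\ref{lemma-3.5}), this ensures the local-max structure is asymptotically unaltered and the binding field acts purely as a height-shift. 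Your sketch points in the right direction but understates how much work this step requires.
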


 From the family of measure-valued pairs  $\{(\zeta^D_N,\tilde\eta^D_N)\colon N\ge1\}$,  we may extract  a (joint) subsequential  weak  limit  (based on vague topology) to be called, with some abuse of notation, $(\zeta^D,\tilde\eta^D)$ for the remainder of this subsection.
The marginal distributions are known from Theorem~2.1 of~\cite{BL3} and from Theorem~\ref{thm-5.5}.
The  main task  is to characterize the joint law of this pair uniquely.

One way to achieve this is to follow the proofs of~\cite{BL1,BL2,BL3} while keeping track of the behavior of the near-extremal limit process throughout the manipulations. We will instead proceed by a direct argument that consists of two steps: We extract the spatial part of the intensity measure of~$\tilde\eta^D$ by way of a suitable limit and then equate it with the spatial part of the measure~$\zeta^D$. The first step is the content of:

\begin{lemma}
	\label{lemma-lln}
	For all~$D\in\mathfrak D$, the vague limit in probability
	\begin{equation}
		\label{E:6.30i}
		\vartheta^D(\cdot):=\lim_{s\to\infty}\,\,\alpha\,\rme^{-\alpha s}
		\tilde\eta^D\bigl(\cdot \times (-s,\,\infty)\times\R^{\Z^2}\bigr) 
	\end{equation}
	exists and is equal in law to $\bar c Z^D$.
	Moreover, the conditional distribution of $\tilde\eta^D$ given $\vartheta^D$ is that of a Poisson point process with intensity 
	$\vartheta^D(\textd x)\otimes\texte^{-\alpha h}\textd h\otimes\nu(\textd\phi)$ and, for any $f\in C(D)$, 
	\begin{equation}
		\label{E:6.30f}
		\lim_{s\to\infty}\,\,\alpha\,\rme^{-\alpha s}
		\tilde\eta^D\bigl(\textd x \times (-s-f(x),\,\infty)\times\R^{\Z^2}\bigr)
		=\texte^{\alpha f(x)}\vartheta^D(\textd x) 
	\end{equation}
	vaguely in probability.
\end{lemma}

\begin{proof}
	By~\cite[Theorem 2.1]{BL3}, $\tilde\eta^D$ is the Cox process $\text{\rm PPP}(W^D(\textd x)\otimes\texte^{-\alpha h}\textd h\otimes\nu(\textd\phi))$, where~$W^D$ is a random Radon measure on~$\overline D$ with $W^D\laweq \bar c\,Z^D$. Conditionally on $W^D$,  for each $s>0$ and each $B\subseteq D$ measurable, 
	\begin{equation}
		\tilde \eta^D\biggl(\Bigl\{(x,h,\phi)\in B\times\R\times\R^{\Z^2}\colon h\in\bigl(-s -f(x),\,\infty\bigr)\Bigr\}\biggr)
		\laweq \text{Poisson}\Bigl(\int_B W^D(\textd x)\alpha^{-1}\texte^{\alpha (s+f(x))}\Bigr)  .
	\end{equation}
	 From the fact that $\lambda^{-1}\text{Poisson}(a\lambda)\to a$ in probability as~$\lambda\to\infty$, applied to $\lambda:=\alpha^{-1}\rme^{\alpha s}$ and $a:=\int_B W^D(\rmd x)\rme^{\alpha f(x)}$,
	we obtain~\eqref{E:6.30f} and (taking $f=0$) also~\eqref{E:6.30i}.
	Similarly, for a collection of disjoint sets $B_i\subseteq D$, $i=1,\ldots,k$ with $\Leb(\partial B_i)=0$, we obtain
	\begin{equation}
		\big(\vartheta^D(B_i) \big)_i =  \Big(\lim_{s\to\infty} \alpha \rme^{-\alpha s} \tilde\eta^D(B_i\times(-s,\infty)\times\bbR^{\bbZ^2} \big)\Big)_i
		=\big( W^D(B_i) \big)_i\quad\text{a.s.},
	\end{equation}
	where we used~\eqref{E:6.30i} and Lemma~\ref{lem:stoc_cont} for the first equality, and argued as above for the second equality.
	It follows that  the conditional distribution of $\tilde\eta^D$ with respect to $\vartheta^D$ is a.\,s.\ equal to that with respect to $W^D$.
\end{proof}

Let $\tilde\zeta^D:=\zeta^D(\cdot\times\R)$.
In order to identify the joint law of $(\tilde\zeta^D,\vartheta^D)$,  where $\vartheta^D$ is as in \eqref{E:6.30i}, we will rely on Theorem~\ref{thm-3.10} for which  we need:

\begin{lemma}[Joint Gibbs-Markov property]
	\label{lemma-GM2}
	For any $D,\wt{D} \in\mathfrak D$ satisfying $\wt{D} \subseteq D$ and $\leb(D\smallsetminus\wt D)=0$,  and any subsequential limits along the same sequence of~$N$'s, 
	\begin{equation}
		\Bigl(\tilde\zeta^{D}(\rmd x),\vartheta^D(\textd  \tilde x)\Bigr) \laweq
		\Bigl(\rme^{\alpha\Phi^{D,\wt{D}}(x)} \tilde\zeta^{\wt{D}}(\rmd x ),\rme^{\alpha\Phi^{D,\wt{D}}( \tilde x)} \vartheta^{\wt{D}}(\rmd  \tilde x)\Bigr),
	\end{equation}
	where $\Phi^{D,\wt{D}}=\NN(0,C^{D,\wt D})$ is independent of $(\tilde\zeta^{\wt{D}},\vartheta^{\wt D})$ on the right-hand side.
\end{lemma}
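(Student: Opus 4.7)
\begin{proofsect}{Proof plan}
The plan is to lift the DGFF Gibbs-Markov decomposition to the joint pair $(\zeta^D_N,\tilde\eta^D_N)$, extract a coupled subsequential limit jointly with $(\zeta^{\wt D}_N,\tilde\eta^{\wt D}_N)$, and then read off the identity for $\vartheta^D$ from the defining limit~\eqref{E:6.30i}. Since $\leb(D\smallsetminus\wt D)=0$, Lemma~\ref{lem:stoc_cont} gives $\tilde\zeta^D((D\smallsetminus\wt D)\times\overline\R)=0$ a.s., and the identity $\vartheta^D\laweq\bar c Z^D$ from Lemma~\ref{lemma-lln} combined with the fact that $Z^D$ does not charge Lebesgue null sets yields the same for $\vartheta^D$; it therefore suffices to work with the restriction of $\Phi^{D,\wt D}$ to~$\wt D$.

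First I would write $h^{D_N}=h^{\wt D_N}+\varphi^{D_N,\wt D_N}$ with $h^{\wt D_N}\independent\varphi^{D_N,\wt D_N}$ (Lemma~\ref{lem:Gibbs-Markov}) and realize both $(\tilde\zeta^D_N,\tilde\eta^D_N)$ and $(\tilde\zeta^{\wt D}_N,\tilde\eta^{\wt D}_N)$ on the same probability space using the respective fields. As in the proof of Lemma~\ref{lem:GibbsMarkov}, this gives $\tilde\zeta^D_N(\textd x)=\texte^{\alpha\varphi^{D_N,\wt D_N}(\lfloor Nx\rfloor)}\tilde\zeta^{\wt D}_N(\textd x)$ up to boundary contributions that vanish in the limit by Lemma~\ref{lemma-tight}. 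For the structured extremal process, the argument of~\cite{BL3} yields an analogous identity in which only the centered-height coordinate of $\tilde\eta^D_N$ is shifted by $\varphi^{D_N,\wt D_N}(\lfloor Nx\rfloor)$, while the position, the local-max indicator, and the $\phi$-coordinate are asymptotically unaffected. This uses that $\varphi^{D_N,\wt D_N}$ is discrete harmonic with equicontinuous continuum limit (Lemmas~\ref{lemma-3.3}--\ref{lemma-3.5}), so its oscillation over any $r_N$-ball contained in a compact subset of $\wt D$ tends to zero in probability when $r_N/N\to 0$. Replacing $\varphi^{D_N,\wt D_N}(\lfloor N\cdot\rfloor)$ by $\Phi^{D,\wt D}(\cdot)$ via Lemma~\ref{lemma-3.5} and extracting the joint weak subsequential limit then yields
\begin{equation}
\bigl(\tilde\zeta^D(\textd x),\tilde\eta^D(\textd x\,\textd h\,\textd\phi)\bigr)\,\laweq\,\bigl(\texte^{\alpha\Phi^{D,\wt D}(x)}\tilde\zeta^{\wt D}(\textd x),\,S_{\Phi^{D,\wt D}}\tilde\eta^{\wt D}(\textd x\,\textd h\,\textd\phi)\bigr),
\end{equation}
where $S_\varphi$ denotes the pushforward by $(x,h,\phi)\mapsto(x,h-\varphi(x),\phi)$ and $\Phi^{D,\wt D}$ is independent of $(\tilde\zeta^{\wt D},\tilde\eta^{\wt D})$ on the right.

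The final step invokes the last display of Lemma~\ref{lemma-lln}, which says precisely that shifting the $h$-coordinate of $\tilde\eta^{\wt D}$ by a continuous function $f$ turns $\vartheta^{\wt D}$ into $\texte^{\alpha f}\vartheta^{\wt D}$ in the $s\to\infty$ limit. Applying this conditionally on $\Phi^{D,\wt D}$ (which has continuous sample paths by Lemma~\ref{lemma-3.3} and is independent of $\tilde\eta^{\wt D}$ on the right-hand side above) yields $\vartheta^D(\textd\tilde x)\laweq\texte^{\alpha\Phi^{D,\wt D}(\tilde x)}\vartheta^{\wt D}(\textd\tilde x)$ jointly with the identity for $\tilde\zeta^D$, which is the claim. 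The main obstacle is the technical verification in the second paragraph that the position, local-max indicator, and relative-configuration coordinates of $\tilde\eta^D_N$ are asymptotically equal to those of $\tilde\eta^{\wt D}_N$ despite the addition of $\varphi^{D_N,\wt D_N}$; while this is implicit in the Gibbs-Markov reduction of the extremal-process convergence in~\cite{BL3}, making it precise as a joint statement with $\tilde\zeta^{\wt D}_N$, so that both subsequential extractions use a common coupling, is what ultimately drives the argument.
\end{proofsect}
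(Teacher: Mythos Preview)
Your plan follows the same route as the paper's proof: couple $h^{D_N}=h^{\wt D_N}+\varphi^{D_N,\wt D_N}$, pass the Gibbs--Markov identity through $(\tilde\zeta^D_N,\tilde\eta^D_N)$, take the joint subsequential limit, and read off $\vartheta^D$ via Lemma~\ref{lemma-lln}. Two points where your outline is looser than the paper:

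\emph{The local-max indicator is not controlled by binding-field oscillation alone.} You argue that because the oscillation of $\varphi^{D_N,\wt D_N}$ over $r_N$-balls tends to zero, the position, local-max indicator, and $\phi$-coordinate of $\tilde\eta^D_N$ asymptotically match those of $\tilde\eta^{\wt D}_N$. Vanishing oscillation ($L_N\to0$ in the paper's notation) is necessary but not sufficient: even an arbitrarily small perturbation can swap which point in $\Lambda_{r_N}(x)$ realizes the maximum, and can move an $r_N$-local max of $h^{D_N}$ to a point that is \emph{not} an $r_N$-local max of $h^{\wt D_N}$. The paper closes this with the ``gap estimate'' (the event $F_N$, via \cite[Theorem~9.2]{B-notes}): with high probability every near-maximal value of $h^{\wt D_N}$ is separated by a fixed $\delta>0$ from all competitors in the annulus $\Lambda_{2r_N}\smallsetminus\Lambda_{r_N/2}$. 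On $F_N\cap\{L_N\le\delta\}$ one then gets a bijection between local maxima of the two fields that moves positions by at most $r_N/2$ and heights by at most $\delta$. Your reference to \cite{BL3} for this step is not quite right; the ingredient you need is this gap estimate from \cite{B-notes}.

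\emph{The paper works with less.} Rather than aiming for the full identity $\tilde\eta^D\laweq S_{\Phi}\tilde\eta^{\wt D}$ (which would force you to track the $\phi$-coordinate too), the paper tests only against $f\otimes 1_{(-s,\infty)}\otimes 1$, deriving the joint law of $\bigl(\tilde\zeta^D,\,\tilde\eta^D(\cdot\times(-s,\infty)\times\R^{\Z^2})\bigr)_{s\in\Q_+}$ and then invoking Lemma~\ref{lemma-lln}. This is exactly enough for $\vartheta^D$ and sidesteps the cluster coordinate entirely. Also, a minor sign: with $h^{D_N}=h^{\wt D_N}+\varphi^N$ the height shift is $h\mapsto h+\Phi(x)$, so your $S_\varphi$ should be the pushforward by $(x,h,\phi)\mapsto(x,h+\varphi(x),\phi)$.
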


\begin{proof}
	By the Gibbs-Markov property  (Lemma~\ref{lem:Gibbs-Markov})  for the DGFF $h^{D_N}$, 
	there is a coupling of $h^{D_N}$ and $h^{\wt D_N}$ such that $h^{D_N}=\varphi^{D_N,\wt D_N} +  h^{\wt D_N}$, with  the fields on the right-hand side being independent.
	Let $\tilde\zeta^D_N$, $\tilde\eta^D_N$ be derived from $h^{D_N}$, and $\tilde\zeta^{\wt D}_N$, $\tilde\eta^{\wt D}_N$ from $h^{\wt D_N}$,
	using  the same $\{r_N\}_{N\ge 1}$ with $r_N\to\infty$ and $N/r_N\to\infty$.
	We abbreviate $\varphi^N:=\varphi^{D_N,\wt D_N}$.
	
	For any $f\in C_\cc(\wt D)$, we have
	\begin{equation}
		\label{e:zetaN-GM}
		\la \tilde \zeta^D_N,\, f\ra
		=\la \tilde \zeta^{\wt D}_N,\, \rme^{\alpha\varphi^N}f\ra
	\end{equation}	
	where we used that~$f$ does not depend on the field coordinate and so is unaffected by the shift by~$\varphi^N$. 
	For the extremal process, we will focus on quantities of the form 
	\begin{equation}
		\label{E:6.34i}
		\la \tilde \eta^D_N,\, f \otimes \Ind_{(-s,\,\infty)}\otimes 1 \ra
	\end{equation}
	where $s>0$.  
	Our aim is to relate \eqref{E:6.34i} to
	\begin{equation}
		\bigl\la \tilde \eta^{\wt D}_N,\, f \otimes \Ind_{(-s -\Phi^{D,\wt D},\,\infty)}\otimes 1\bigr\ra 
	\end{equation}
	where~$\Phi^{D,\wt D}$ is the continuum  ``binding''  field  (regarded as independent of~$\eta^{\wt D}_N$). Besides the fact that the test function is not continuous in the field variable, here we are faced with the additional difficulty that the additive term~$\varphi^N$ may change the spatial positions of relevant $r_N$-local extrema.

	Let $a>0$ be such that $\supp(f)\subset \wt D^{a}:= \{x\in \wt D\colon\rmd_\infty(x,\wt D^\cc)>a\}$.
	To control the fluctuation of the ``binding'' field at scale $2r_N$, we define
	\begin{equation}
		L_N:=\max_{\substack{x,y\in \wt D^a_N\\\rmd_\infty(x,y)\leq 2r_N}}
		\big|\varphi^N_x-\varphi^N_y\big|\,.
	\end{equation}
	Given  $\tilde s>0$,  for each~$N\ge1$ and each $\delta>0$ consider the event 
	\begin{equation}
		F_N:=\bigcap_{x\in \wt D^a_N} \biggl\{h_x^{\wt D_N}\ge m_N-\wt s\,\text{ implies }\,\max_{y\in\Lambda_{2r_N}(x)\smallsetminus\Lambda_{r_N/2}(x)}h^{\wt D_N}_y\le h^{\wt D_N}_x-\delta\biggr\}. 
	\end{equation}
	By the ``gap estimate'' in~\cite[Theorem~9.2]{B-notes}, we have
	\begin{equation}
		\label{E:6.38}
		\lim_{N\to\infty}\bbP(F_{N}^\cc)=0
	\end{equation}
	while
	\begin{equation}
		\label{E:6.39}
		\lim_{N\to\infty}\bbP(L_N>\delta)=0
	\end{equation}
	by the coupling to the continuum ``binding'' field in Lemma~\ref{lemma-3.5}. 
	
	On~$F_N$  and for sufficiently large $N$, for each $x\in\wt D^{2a}_N$ with~$h^{\wt D_N}_x\ge m_N-\wt s$  there exists a.s.-unique $y_x\in \Lambda_{r_N/2}(x)$ where $h^{\wt D_N}$ achieves its maximum on~$\Lambda_{r_N}(y_x)$. On $F_N\cap\{L_N\le\delta\}$,  and if $x/N$ is also a point of
	$\tilde\eta^D_N(\cdot\times(-s,\infty)\times\bbR^{\bbZ^2})$, we have
	\begin{equation}
		h^{D_N}_x = h^{\wt D_N}_x + \varphi^N_x \le
		h^{\wt D_N}_{y_x} + \varphi^N_{y_x} + \delta =
		h^{D_N}_{y_x} + \delta \le h^{D_N}_x + \delta
	\end{equation}
	and thus $|h_x^{\wt D_N}-h^{\wt D_N}_{y_x}-\varphi^N_{y_x}|\le\delta$.
	For $\wt s \geq s+ \max_{\wt D^a_N}|\varphi^N|$, we have $h^{\wt D_N}_x\geq m_N - \wt s$
	whenever $x/N$ is a point of $\tilde\eta^D_N(\cdot\times(-s,\infty)\times\bbR^{\bbZ^2})$.
	Denoting $\text{osc}_u( f):=\sup\{| f(y)- f(x)|\colon |y-x|<u\}$ and assuming~$f\ge0$, we then obtain
	\begin{equation}
		\label{e:etaGM}
		\bigl\la  \tilde\eta^D_N,  \, f \otimes \Ind_{(-s,\,\infty)}\otimes 1 \bigr\ra
		\le \text{osc}_{r_N/N}( f)\,  \tilde\eta^D_N  \bigl(D\times(-s,\infty)\times  \R^{\bbZ^2}  \bigr)
		+\bigl\la \tilde \eta^{\wt D}_N,\,  f \otimes \Ind_{(-s-2\delta-\Phi^{D,\wt D},\,\infty)}\otimes 1 \bigr\ra,
		\quad
	\end{equation}
	where we also used Lemma~\ref{lemma-3.5} to couple $\varphi^N$ to a continuum ``binding'' field such that  also $E_N:=\bigl\{\max_{x\in\wt D^a}\bigl| \varphi^N_{\lfloor xN\rfloor}-\Phi^{D,\wt D}_x\bigr|\leq \delta\bigr\}$ occurs, besides $F_N\cap\{L_N\le\delta\}$. 
	
	We now take~$N\to\infty$  in~\eqref{e:etaGM}  along the subsequence such that (taking further subsequence if necessary) the pair $(\tilde\eta^D_N, \tilde\eta^{\wt D}_N)$ converges to $(\tilde\eta^D,\tilde\eta^{\wt D})$ in law. Relying on \twoeqref{E:6.38}{E:6.39} and also Lemma~\ref{lemma-3.5},  we  eliminate the contribution of $F_N^\cc\cup E_N^\cc\cup\{L_N>\delta\}$.
	 Using furthermore that $\max_{\wt D^a}|\Phi^{D,\wt D}|$ is a.\,s.\ finite and that  the limit measure~$\tilde\eta^{\wt D}$ is continuous in the second coordinate we then take also~$\delta\downarrow0$  and $\wt s\to\infty$  in~\eqref{e:etaGM},  and we  get the pointwise comparison
	\begin{equation}
		\bigl\la \tilde \eta^D,\,  f  \otimes \Ind_{(-s,\,\infty)}\otimes 1 \bigr\ra
		\le \bigl\la \tilde \eta^{\wt D},\, f \otimes \Ind_{(-s-\Phi^{D,\wt D},\,\infty)}\otimes 1 \bigr\ra \quad\text{a.s.}
	\end{equation}
	A completely analogous argument, which we skip for brevity, shows that ``$\ge$'' holds as well.
	Combining with~\eqref{e:zetaN-GM}, we conclude 
	\begin{equation}
	\begin{aligned}
		\Bigl(\tilde\zeta^D(\rmd x),\tilde\eta^D\bigl(\rmd \tilde x\times
		(-s,\,\infty)&\times \R^{\Z^2}\bigr)\Bigr)_{s\in\Q_+} \\
		&\laweq
		\Bigl(\rme^{\alpha\Phi^{D,\wt D}(x)}\tilde\zeta^{\wt D}(\rmd x),
		\tilde\eta^{ \wt D}(\rmd \tilde x\times \bigl(-s-\Phi^{D,\wt D}(\tilde x),\,\infty\bigr)\times \R^{\Z^2}\bigr)\Bigr)_{s\in\Q_+},
	\end{aligned}
	\end{equation}
	where $\Phi^{D,\wt D}$ is independent of $(\tilde\zeta^{\wt D},\tilde\eta^{\wt D})$ on the right-hand side. 
	The assertion now follows from Lemma~\ref{lemma-lln}.
\end{proof}

We are ready to give:

\begin{proof}[Proof of Theorem~\ref{thm-5.6}]
	 Let $(\zeta^D,\tilde\eta^D)$ be a weak subsequential limit of measures $(\zeta_N^D,\tilde\eta_N^D)$ and define $(\tilde\zeta^D,\vartheta^D)$ by  $\tilde \zeta^D(\cdot) := \zeta^D(\cdot \times\bbR)$ and defining $\vartheta^D$ from $\tilde\eta^D$ is as in~\eqref{E:6.30i}. Since the law of~$\vartheta^D$ is known from Lemma~\ref{lemma-lln},~$\vartheta^D$ satisfies all of the conditions of Theorem~\ref{BL2-thm2.8} with~$c:=\bar c$ in \eqref{E:1.25a}. The same  follows  with $c:=gc_\star$  for~$\tilde\zeta^D$  from  the proof of Theorem~\ref{thm-5.5}.
	 Indeed, for any $f\in C(\overline D)$ and $f_n\in C_\cc(\bbR)$ with $f_n \uparrow 1$, the Dominated Convergence Theorem gives
	\begin{equation}\label{e:cLQG-zeta}
		(g c_\star)^{-1} \langle \tilde\zeta^D , f\rangle 
		= (g c_\star)^{-1} \lim_{n\to\infty}  \langle \zeta^D , f\otimes f_n \rangle
		= \Bigl((g c_\star)^{-1}\lim_{n\to\infty}c_\star\int_0^\infty f_n(t) t\texte^{-\frac{t^2}{2g}}\textd t  \Bigr)\Bigl(\lim_{n\to\infty} \langle W^D_{f_n} ,f\rangle\Bigr) 
	\end{equation}
	where $W^D_{f_n}$ is from~\eqref{E:5.25} and has the law of the cLQG. The  expression in the first large parentheses on the right equals~one.

	The joint validity of the Gibbs-Markov property  for $\vartheta^D$ and $\tilde\zeta^D$  holds true by  Lemma~\ref{lemma-GM2}.  
	Theorem~\ref{thm-3.10} thus implies
	\begin{equation}
		\label{E:5.55}
		\bar c^{-1}\vartheta^D =  (gc_\star)^{-1}  \tilde \zeta^D\quad\text{\rm\ a.s.}
	\end{equation}
	Denote $Z^D:=(gc_\star)^{-1}\tilde \zeta^D$ and note that, by Lemma~\ref{lemma-lln}, this measure has the law of the cLQG. Theorem~\ref{thm-5.5} then shows that~$\zeta^D$ is determined completely by the sample of~$Z^D$ and, in fact,
	\begin{equation}
		\zeta^D(\textd x\textd t)=c_\star Z^D(\textd x)\otimes \Ind_{(0,\infty)}(t)t\texte^{-\frac{t^2}{2g}}\textd t.
	\end{equation}
	Lemma~\ref{lemma-lln}  and~\eqref{E:5.55} in turn give  \eqref{e:etaPPP}, with the Poisson process conditionally independent of~$Z^D$.  Hence, the subsequential limit $(\zeta^D, \eta^D)$ obeys~\eqref{E:5.30}.  The limit object does not depend on the chosen subsequence, so we also get full convergence. 
\end{proof}

\subsection{Including the field itself}
As our last item left to do, we need to include the DGFF itself in the distributional convergence.  This is based on the following observation: 

\begin{lemma}
	\label{lemma-6.10}
	The  weak limit
	\begin{equation}
		\label{E:2.10ue}
		\bigl(\zeta^D_N,\tilde\eta^D_N,h^{D_N}\bigr)\,\,\underset{N \to \infty}{\Lawarrow}\,\, (\zeta^D,\tilde\eta^D,h^D)
	\end{equation}
	exists with the law of the right-hand side determined by
	\begin{equation}
	\begin{aligned}
		\label{E:5.60b}
		E\bigl(\texte^{-\langle\zeta^D,f_1\rangle-\langle\tilde\eta^D, f_2\rangle+h^D(\rho)}\bigr)
		=&\,\texte^{\frac12\langle \rho,\wh G^D\rho \rangle}\,E\biggl(\exp\Bigl\{-c_\star\int_D Z^D(\textd x)\texte^{\alpha (\wh G^D\rho)(x)}\int_0^\infty\textd t\, t\texte^{-\frac{t^2}{2g}}\,f_1(x,t)
		\\
		&-\bar c\int_D Z^D(\textd x)\texte^{\alpha (\wh G^D\rho)(x)}\int_{\R}\textd h\,\texte^{-\alpha h}\int_{\R^{\Z^2}}\nu(\textd\phi)\bigl(1-\texte^{- f_2(x,h,\phi)}\bigr)\Bigr\}\biggr),
	\end{aligned}
	\end{equation}
	for each non-negative~$f_1\in C_\cc(D\times\R)$ and $f_2\in C_\cc(D\times\R\times\R^{\Z^2})$ and arbitrary~$ \rho\in \mathcal{M}_\cc(D)$.
\end{lemma}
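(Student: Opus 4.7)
The plan is to extract the tilt by $h^{D_N}(f_3)$ via a Cameron--Martin/Girsanov transformation, thereby reducing the identification of the limit to the joint convergence of $(\zeta^D_N,\tilde\eta^D_N)$ established in Theorem~\ref{thm-5.6} applied to a suitably shifted field. Since $h^{D_N}$ is centered Gaussian and $h^{D_N}(f_3)$ is a linear functional of it, the Gaussian tilting identity gives, for any bounded measurable $F$,
\begin{equation*}
\E\bigl(F(h^{D_N})\texte^{h^{D_N}(f_3)}\bigr)
=\texte^{\frac12\Var(h^{D_N}(f_3))}\,\E\bigl(F(h^{D_N}+\mu_N)\bigr),
\end{equation*}
where $\mu_N(x):=\Cov(h^{D_N}_x,h^{D_N}(f_3))=\int_D f_3(y)G^{D_N}(x,\lfloor yN\rfloor)\,\textd y$. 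The off-diagonal Green-function convergence of Lemma~\ref{lemma-3.2}, combined with the diagonal bound of Lemma~\ref{lemma-3.2a} to dominate the logarithmic singularity of $\wh G^D$, yields $\Var(h^{D_N}(f_3))\to\la f_3,\wh G^D f_3\ra$ and $\mu_N(\lfloor\cdot N\rfloor)\to\wh G^D f_3$ uniformly on every compact subset of $D$.

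A direct substitution shows that the measure built from $h^{D_N}+\mu_N$ in place of $h^{D_N}$ equals $\zeta^D_N$ after multiplying the atom at $x/N$ by the weight $\texte^{\alpha\mu_N(x)}$ (the concurrent shift of the $(m_N-h_x)/\sqrt{\log N}$ coordinate by $\mu_N(x)/\sqrt{\log N}$ is uniformly $o(1)$), and equals $\tilde\eta^D_N$ after translating the field coordinate at the atom at $x/N$ by $\mu_N(x)$; crucially, the cluster coordinate $\phi$ in $\tilde\eta^D_N$ is invariant under this shift because it encodes only increments of $h^{D_N}$. Applying the tilting identity above to $F(h):=\exp\{-\la\zeta^D_N,f_1\ra-\la\tilde\eta^D_N,f_2\ra\}$ and using uniform continuity of $f_1,f_2$ on compacts, uniform convergence of $\mu_N$, tightness of $h^{D_N}(f_3)$ (since $\Var(h^{D_N}(f_3))=O(1)$), and the joint convergence in Theorem~\ref{thm-5.6}, the left-hand side of~\eqref{E:5.60b} converges to
\begin{equation*}
\texte^{\frac12\la f_3,\wh G^D f_3\ra}\,
\E\biggl(\exp\Bigl\{-\la\zeta^D,\texte^{\alpha(\wh G^D f_3)(\cdot)}f_1\ra
-\la\tilde\eta^D,f_2(\cdot,\cdot+(\wh G^D f_3)(\cdot),\cdot)\ra\Bigr\}\biggr).
\end{equation*}

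It then remains to absorb the translation of the field coordinate in $\tilde\eta^D$ into a multiplicative weight on its intensity. Conditioning on $Z^D$ and using the Poisson structure~\eqref{e:etaPPP}, the change of variables $h\mapsto h-(\wh G^D f_3)(x)$ in the $h$-integral of the Laplace functional converts the reference intensity $\texte^{-\alpha h}\textd h$ into $\texte^{\alpha(\wh G^D f_3)(x)}\texte^{-\alpha h}\textd h$, producing exactly the right-hand side of~\eqref{E:5.60b}. Since the finite-dimensional marginals of $h^{D_N}$ in the product topology on $\R^{C_0(D)}$ are determined by joint Laplace transforms of this form (by linear combinations and analytic continuation), the same computation also yields the weak convergence \eqref{E:2.10ue}. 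The only technical step requiring genuine work is the uniform convergence $\mu_N(\lfloor\cdot N\rfloor)\to\wh G^D f_3$ on compact subsets of $D$; this is immediate away from the diagonal by Lemma~\ref{lemma-3.2} and is controlled near it by the logarithmic bound of Lemma~\ref{lemma-3.2a} together with the local integrability of $\log|x-y|$ in two dimensions.
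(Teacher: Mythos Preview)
Your proof is correct and follows essentially the same route as the paper: a Cameron--Martin/Girsanov tilt by $h^{D_N}(f_3)$ converts the left-hand side into an expectation of $(\zeta^D_N,\tilde\eta^D_N)$ evaluated at shifted test functions, the uniform convergence $G^{D_N}f_3\to\wh G^Df_3$ lets one replace the shift by its continuum limit, Theorem~\ref{thm-5.6} then passes to the limit, and finally the translation $h\mapsto h-(\wh G^Df_3)(x)$ in the Poisson Laplace functional yields~\eqref{E:5.60b}. Your write-up is in fact somewhat more detailed than the paper's (which compresses all approximation errors into a single $o(1)$); the only minor imprecision is that the cluster coordinate $\phi$ is not literally invariant under the shift but only asymptotically so, since $\mu_N(x)-\mu_N(x+z)=o(1)$, and similarly the $r_N$-local-maximum condition is preserved with high probability because the oscillation of $\mu_N$ on scale $r_N$ is $o(1)$ while the field gaps at local maxima are order one.
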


\begin{proof}
	Under the measure 
	\begin{equation}
		A\mapsto\BbbP\bigl(\texte^{h^{D_N}(\rho)-\frac12\langle \rho, G^{D_N}\rho\rangle}\Ind_A\bigr)
	\end{equation}
	the DGFF $h^{D_N}$ has the law of $h^{D_N}+\rho G^{D_N}$ under~$\BbbP$.  By Lemma~\ref{lemma-3.2}, $G^{D_N}\rho$,  with the argument scaled by~$N$, tends uniformly to~$\wh G^D\rho$ as~$N\to\infty$.  Hence, we get
	\begin{equation}
		\E\bigl(\texte^{-\langle\zeta^D_N,f_1\rangle-\langle\tilde\eta^D_N, f_2\rangle+h^{D_N}(\rho)}\bigr)
		=o(1)+\texte^{\frac12\langle \rho,\wh G^D\rho\rangle}\E\Bigl(\texte^{-\langle\zeta^D_N,\,\texte^{\alpha\wh G^D\rho}f_1\rangle-\langle\tilde\eta^D_N, f_2\circ\theta_{\wh G^D\rho}\rangle}\Bigr),
	\end{equation}
	where $(f_2\circ\theta_s)(x,h,\phi) := f_2(x,  h(x)+s(x) ,\phi)$ and $o(1)\to0$ as~$N\to\infty$. The joint convergence of $(\zeta^D_N,\tilde\eta^D_N)$ to~$(\zeta^D,\tilde\eta^D)$ then shows
	\begin{equation}
		E\bigl(\texte^{-\langle\zeta^D_N,f_1\rangle-\langle\tilde\eta^D_N, f_2\rangle+h^{D_N}(\rho)}\bigr)
		\,\,\underset{N\to\infty}\longrightarrow\,\,
		\texte^{\frac12\langle \rho,\wh G^D\rho\rangle}E\Bigl(\texte^{-\langle\zeta^D,\,\texte^{\alpha\wh G^D\rho}f_1\rangle-\langle\tilde\eta^D, f_2\circ\theta_{\wh G^D\rho}\rangle}\Bigr).
	\end{equation}
	This proves the existence of the joint limit in \eqref{E:2.10ue}. Invoking \eqref{E:5.30} along with a routine change of variables, we then get \eqref{E:5.60b} as well.
\end{proof}

We are finally ready to give:

\begin{proof}[Proof of Theorem~\ref{thm-A}]
	Lemma~\ref{lemma-6.10} shows the 
	 weak convergence of $(\zeta^D_N,\tilde\eta^D_N,h^{D_N})$ as~$N\to\infty$. Moreover, \eqref{E:5.60b} with $f_2:=0$ and $f_1(x,t):=c g(x)$ for a suitable constant~$c>0$ shows that the law of~$Z^D$ underlying the limit processes~$\zeta^D$ and~$\tilde\eta^D$ obeys
	\begin{equation}
		E\bigl(\texte^{-\langle Z^D,g\rangle+h^D(\rho)}\bigr)
		=\texte^{\frac12\langle\rho,\wh G^D\rho\rangle}\E\bigl(\texte^{-\langle \texte^{\alpha\wh G^D\rho}Z^D,g\rangle}\bigr)
	\end{equation}
	for all~$\rho\in \mathcal{M}_\cc(D)$  and all non-negative~$g\in C_\cc(D)$. Theorem~\ref{thm-4.3} implies that~$Z^D$ is, up to a modification on a set of vanishing probability, the cLQG associated with~$h^D$. 
	
	This proves the claim for the structured extremal process; the reduction to the unstructured process is then the same as in the proof of \cite[Corollary~2.2]{BL3}.
\end{proof}

\begin{proof}[Proof of Corollary~\ref{cor-2.5}]
	For $M>0$ arbitrary, we first show the assertion for the boundedly supported continuous function $\Phi_M$ in place of $\Phi$, where $\Phi_M$ is defined by $\Phi_M(t)=\Phi(t)$ for $t\in[-1,M]$, $\Phi_M(t)=0$ for $t\in(-\infty,-2]\cup[M+1,\infty)$, and the linear interpolation in between.
	Write~$\vartheta^D_{N,M}$ for the measure on the left of \eqref{E:2.14} with $\Phi_M$ in place of $\Phi$, and let~$f\colon D\to\R$ be continuous  with compact support in~$D$.  Then $\langle\vartheta^D_{N,M},f\rangle=\langle\zeta^D_N,f\otimes \Phi_M\rangle$. Since $f\otimes\Phi_M$ is continuous and compactly supported in $ D \times \bbR$, Theorem~\ref{thm-A} ensures that $\langle\zeta^D_N,f\otimes \Phi_M\rangle$ tends in law to $\langle\zeta^D,f\otimes \Phi_M\rangle=c(\Phi_M)\langle Z^D,f\rangle$ (where $c(\Phi_M)$ is defined in the same way as $c(\Phi)$ with $\Phi_M$ in place of $\Phi$). It follows that $\vartheta^D_{N,M}$ tends in law to~$c(\Phi_M)Z^D$ relative to the vague topology on $D$.
	
	Next, we write $\vartheta^D_N$ for the measure on the left-hand side of~\eqref{E:2.14}, and 
	for arbitrary $\delta>0$, $u\ge 1$ and sufficiently large~$N$, we estimate
	\begin{equation}
	\begin{aligned}
		\label{E:6.53}
		\BbbP\biggl( \Bigl| &\langle\vartheta^D_{N,M},f\rangle -  
		\langle\vartheta^D_N,f\rangle \Bigr| > \delta \biggr)
		\le \BbbP\bigl( \max h^{D_N} > m_N + u\bigr)
		\\
		&+\delta^{-1}\bbE\Biggl( \frac{\|f\|_\infty}{\log N}\sum_{x\in D_N}
		\Bigl|\Phi\Bigl(\frac{m_N-h^{D_N}_x}{\sqrt{\log N}}\Bigr)\Bigr|
		\Ind_{\{m_N-h^{D_N}_x\geq M\sqrt{\log N}\}}\,
		\texte^{\alpha(h^{D_N}_x-m_N)} 
		; h^{D_N} \le m_N + u \Biggr)
	\end{aligned}
	\end{equation}
	using a union bound and Markov inequality. We now choose $u$ such that the first term on the right hand side is bounded by $\delta$ for all $N> \rme^{u^2}$ by Lemma~\ref{lemma-DGFF-tail}. As $f$ has compact support in $D$, there exists $\delta'>0$ such that only $x\in D^{\delta'}_N$ contribute to the sum in the second term of~\eqref{E:6.53}. 
	Hence, the second term on the right-hand side of~\eqref{E:6.53} is for arbitrary $\ep>0$  bounded by a constant times
	\begin{equation}
		\delta^{-1}(1+u)\|f\|_\infty\int_{s\ge M} \rmd s \,\big| \Phi(s)\big| \rme^{-s^2\big(\tfrac{1}{2g}-\ep\big)},
	\end{equation}
	uniformly in all sufficiently large $N$, which follows as in the proof of Proposition~\ref{lem:first-moment}, analogously to~\eqref{e:5.24}. The integral in the last display vanishes as $M\to\infty$. 
	Hence, the left-hand side of~\eqref{E:6.53} is bounded by $2\delta$ for all sufficiently large~$N$ and~$M$.
	Moreover, from Dominated Convergence, we get that $[c(\Phi_M)-c(\Phi)]Z^Df$ vanishes as $M\to\infty$. By combining the estimates, we obtain that
	$\langle \vartheta^D_N,f\rangle$ tends in law to $\langle c(\Phi)Z^D,f\rangle$,
	which yields the assertion.
\end{proof}

%
%

\begin{acks}[Acknowledgments]
 We  are grateful  to an anonymous referee for valuable comments.
\end{acks}
\begin{funding}
This project has been supported in part by the NSF awards DMS-1712632 and DMS-1954343, ISF grants No.~1382/17 and~2870/21 and BSF award 2018330. The second author has also been supported in part by a Zeff Fellowship  at the Technion, by a Minerva Fellowship of the Minerva  Stiftung  Gesellschaft f\"ur die Forschung mbH and by the DFG (German Research Foundation) grant No.~2337/1-1 (project 432176920).
\end{funding}



\begin{thebibliography}{XX}
	
	\bibitem{AB}
	Y. Abe and M. Biskup (2022). Exceptional points of two-dimensional random walks at multiples of the cover time.  \textit{Probab. Theory Rel. Fields} \textbf{183}, 1--55.
	
	\bibitem{ABL}
	Y. Abe, M. Biskup and S. Lee (2023). Exceptional points of discrete-time random walks in planar domains.  \textit{Electron. J. Probab.} (to appear).  arXiv:1911.11810
	
	
	\bibitem{AHPS}
	 J.~Aru, N.~Holden, E.~Powell and X.~Sun (2023). Brownian half-plane excursion and critical Liouville quantum gravity.
	\textit{J. London Math. Soc. (2)} \textbf{107}, 441--509.
	
	

	\bibitem{Berestycki}
	N.~Berestycki (2017).
	An elementary approach to Gaussian multiplicative chaos.
	\textit{Electron. Commun. Probab.} \textbf{22}, 1--12.
	

	
	\bibitem{B-notes}
	M. Biskup (2020). 
	Extrema of the two-dimensional Discrete Gaussian Free Field. In: M.~Barlow and G.~Slade (eds.): Random Graphs, Phase Transitions, and the Gaussian Free Field. SSPROB 2017. Springer Proceedings in Mathematics \&\ Statistics, vol 304, pp 163--407. Springer,~Cham. 
	
	\bibitem{BGL}
	M.~Biskup, S.~Gufler and O.~Louidor (2023).   On support sets  of the critical Liouville Quantum Gravity. In preparation.
	
	\bibitem{BL1} 
	M. Biskup and O. Louidor (2016).
	Extreme local extrema of two-dimensional discrete Gaussian free field.
	\textit{Commun. Math. Phys.} \textbf{345} 271-304.
	
	\bibitem{BL2}
	M. Biskup and O. Louidor (2020).
	Conformal symmetries in the extremal process of two-dimensional discrete Gaussian free field.
	\textit{Commun. Math. Phys.} \textbf{375}, no.~1, 175--235.
	
	\bibitem{BL3} 
	M. Biskup and O. Louidor (2018).
	Full extremal process, cluster law and freezing for two-dimensional discrete Gaussian free field.
	\textit{Adv. Math.} \textbf{330}  589-687.
	
	\bibitem{BL4} 
	M. Biskup and O. Louidor (2019).
	On intermediate level sets of two-dimensional discrete Gaussian free field. \textit{Ann. Inst. Henri Poincar\'e} \textbf{55}, no.~4, 1948--1987.
	
	
	\bibitem{BDingZ}
	M. Bramson, J. Ding and O. Zeitouni (2016). 
	Convergence in law of the maximum of the two-dimensional discrete Gaussian free field. 
	\textit{Commun. Pure Appl. Math} \textbf{69}, no.~1, 62--123.
	
	\bibitem{BZ}
	M. Bramson and O. Zeitouni (2012). 
	Tightness of the recentered maximum of the two-dimensional discrete Gaussian free field. 
	\textit{Comm. Pure Appl. Math.} \textbf{65}, 1--20.
	
	
	
	\bibitem{DZ}
	J. Ding and O. Zeitouni (2014). 
	Extreme values for two-dimensional discrete Gaussian free field. 
	\textit{Ann. Probab.} \textbf{42}, no.~4, 1480--1515
	
	\bibitem{DRSV1}
	B. Duplantier, R.~Rhodes, S.~Sheffield and V.~Vargas  (2014).
	Critical Gaussian multiplicative chaos: Convergence of the derivative martingale. 
	\textit{Ann. Probab.} \textbf{42}, no.~5, 1769--1808.
	
	\bibitem{DRSV2}
	B. Duplantier, R.~Rhodes, S.~Sheffield and V.~Vargas (2014).
	Renormalization of critical Gaussian multiplicative chaos and KPZ formula.  
	\textit{Commun. Math. Phys.} \textbf{330}, no.~1,  283--330.
	
	\bibitem{DS} 
	B. Duplantier and S. Sheffield (2011).
	Liouville quantum gravity and KPZ.
	\textit{Invent. Math.} \textbf{185}, 333-393.
	
	\bibitem{DSM21}  B. Duplantier, J.~Miller and S.~Sheffield (2021).
	Liouville quantum gravity as a mating of trees.
	\textit{Astérisque} no.~427. 
	
	
	\bibitem{Ballot}
	S.~Gufler and O.~Louidor (2022). Ballot theorems for the two-dimensional discrete Gaussian free field. \textit{J. Stat. Phys.} \textbf{189}, no.~13, 1--98. 
	
	
	\bibitem{Jego}
	A. Jego (2023). Characterisation of planar Brownian multiplicative chaos. \textit{Commun. Math. Phys.} {\bf 399}, 971 -- 1019.
	
	
	\bibitem{JS}
	J. Junnila and E. Saksman (2017). 
	Uniqueness of critical Gaussian chaos. \textit{Elect. J.~Probab} \textbf{22}, 1--31.
	
	\bibitem{Kahane}
	J.-P. Kahane  (1985). 
	Sur le chaos multiplicatif. 
	\textit{Ann. Sci. Math. Qu\'ebec} \textbf{9}, no.2, 105--150.
	
	
	\bibitem{Lawler-Limic}
	G.F.~Lawler and V.~Limic (2010).
	\textit{Random walk: a modern introduction}. 
	Cambridge Studies in Advanced Mathematics, vol.~123. Cambridge University Press, Cambridge, xii+364.
	
	
	
	\bibitem{RV-review}
	R.~Rhodes and V.~Vargas (2014).
	Gaussian multiplicative chaos and applications: A review.
	\textit{Probab. Surveys} \textbf{11}, 315--392
	
	\bibitem{Powell}
	E.~Powell (2018).
	Critical Gaussian chaos: convergence and uniqueness in the derivative normalisation.
	\textit{Electron. J. Probab.} \textbf{23}, paper no.~31, 26~pp.
	
	
	\bibitem{Shamov}
	A.~Shamov (2016). On Gaussian multiplicative chaos. \textit{J. Funct. Anal.} \textbf{270}, no.~9, 3224--3261
	
\end{thebibliography}

\bibliographystyle{abbrv}

\end{document}